\tikzset{curve/.style={settings={#1},to path={(\tikztostart)
    .. controls ($(\tikztostart)!\pv{pos}!(\tikztotarget)!\pv{height}!270:(\tikztotarget)$)
    and ($(\tikztostart)!1-\pv{pos}!(\tikztotarget)!\pv{height}!270:(\tikztotarget)$)
    .. (\tikztotarget)\tikztonodes}},
    settings/.code={\tikzset{quiver/.cd,#1}
        \def\pv##1{\pgfkeysvalueof{/tikz/quiver/##1}}},
    quiver/.cd,pos/.initial=0.35,height/.initial=0}
\tikzset{tail reversed/.code={\pgfsetarrowsstart{tikzcd to}}}
\tikzset{2tail/.code={\pgfsetarrowsstart{Implies[reversed]}}}
\tikzset{2tail reversed/.code={\pgfsetarrowsstart{Implies}}}
\tikzset{no body/.style={/tikz/dash pattern=on 0 off 1mm}}
\newcommand{%
    
    \import{./figures/}{.pdf_tex}
}[1]{%
    
    \import{./figures/}{#1.pdf_tex}
}
\newcommand{\mcal}[1]{\mathcal{#1}}
\newcommand{\msc}[1]{\mathscr{#1}}
\newcommand{\mscc}[1]{\mathbf{#1}}
\newcommand{\mrm}[1]{\mathrm{#1}}
\newcommand{\mbf}[1]{\mathbf{#1}}
\newcommand{\mbb}[1]{\mathbb{#1}}
\newcommand{\mfrak}[1]{\mathfrak{#1}}
\newcommand{\down}[3][ ]{(#2 \downarrow #3)^{#1}}
\newcommand{\paren}[1]{\left( #1 \right)}
\newcommand{\R}{\mbb{R}}
\newcommand{\id}{{\mathrm{id}}}
\newcommand{\Hom}{\mathrm{Hom}}
\newcommand{\Vinj}{{\mscc{V}^{\mathrm{inj}}}}
\newcommand{\Vhook}{\mscc{V}^{\hookrightarrow}}
\newcommand{\pt}{{\mathrm{pt}}}
\newcommand{\Cinfty}{\mscc{C}\mathrm{at}_{\infty}}
\newcommand{\Map}{{\mathrm{Map}}}
\newcommand{\cdon}{\[\begin{tikzcd}}
\newcommand{\cdoff}{\end{tikzcd}\]}
\newcommand{\ncdon}{\begin{equation}\begin{tikzcd}}
\newcommand{\ncdoff}{\end{tikzcd}\end{equation}}
\newcommand{\OO}{{\mathrm{O}}}
\newcommand{\Sing}{{\mathrm{Sing}}}
\newcommand{\Nerve}{{\mathrm{N}}}
\newcommand{\Nhc}{\mathrm{N}^{\mathrm{hc}}}
\newcommand{\EEx}{\mathbf{EX}}
\newcommand{\Phat}{\mscc{P}^{\Delta}}
\newcommand{\Fun}{\mathrm{Fun}}
\newcommand{\Tangent}{\mathrm{T}}
\newcommand{\Normal}{\mathrm{N}}
\newcommand{\evv}{\mathrm{ev}}
\newcommand{\Ar}{\mathrm{Ar}}
\newcommand{\btd}{\mathord{\mathpalette\raise@half\bigtriangledown}}
\newcommand\raise@half[2]{%
  \raisebox{.9\depth}{$\m@th#1#2$}%
}
\newcommand{\cat}[2]{\mscc{#1}\mathrm{#2}}
\newcommand{\ocat}[1]{\mathrm{#1}}
\newcommand{\Ctop}{\mscc{C}\mrm{at}_{\mrm{Top}}}
\newcommand{\tame}{\tau}
\numberwithin{equation}{section}
\theoremstyle{definition}
\newtheorem{definition}[equation]{Definition}
\newtheorem*{definition*}{Definition}
\newtheorem*{notation*}{Notation}
\newtheorem{construction}[equation]{Construction}
\newtheorem*{construction*}{Construction}
\theoremstyle{remark}
\newtheorem{remark}[equation]{Remark}
\newtheorem*{remark*}{Remark}
\newtheorem{example}[equation]{Example}
\newtheorem*{example*}{Example}
\newtheorem*{warning*}{Warning}
\newtheorem*{convention*}{Convention}
\theoremstyle{plain}
\newtheorem{proposition}[equation]{Proposition}
\newtheorem*{proposition*}{Proposition}
\newtheorem{theorem}[equation]{Theorem}
\newtheorem*{theorem*}{Theorem}
\newtheorem{definition-theorem}[equation]{Definition-Theorem}
\newtheorem*{definition-theorem*}{Definition-Theorem}
\newtheorem*{postulate*}{Postulate}
\newtheorem{lemma}[equation]{Lemma}
\newtheorem*{lemma*}{Lemma}
\newtheorem*{observation*}{Observation}
\newtheorem{corollary}[equation]{Corollary}
\newtheorem*{corollary*}{Corollary}
\newtheorem*{question*}{Question}
\title{{Linked spaces and exit paths}}
\author{\"Od\"ul Tet\.{i}k}
\thanks{This research was partially supported by the NCCR SwissMAP, funded by the Swiss National Science Foundation (SNF) and by the Austrian Science Fund (FWF) through SNF Grants no.\ 200020\_192080 and 200021\_227719 and FWF Project no.\ P 37046.}
\address{Institut für Mathematik, Universität Zürich, Winterthurerstrasse 190, 8057 Zürich, Switzerland}
\curraddr{University of Vienna, Faculty of Physics, Mathematical Physics Group, Boltzmanngasse 5, 1090 Vienna, Austria}
\email{oeduel.tetik@univie.ac.at}
\subjclass{Primary 57N80, 18N60; Secondary 32S60}
\keywords{Exit-path categories. $\infty$-Categories. Stratified spaces. Conically smooth spaces. Constructible sheaves.}
\date{}
\begin{document}

\begin{abstract}
    Conically smooth spaces (CSSs), introduced by Ayala, Francis and Tanaka, constitute a large class of singular spaces including Whitney-stratified spaces. We reduce the stratified topology of CSSs over depth-$1$ posets to the ordinary topology of linked smooth manifolds, i.e., spans $M\xleftarrow{\pi} L\xrightarrow{\iota}N$ of smooth manifolds where $\pi$ is a fibre bundle and $\iota$ is a closed embedding. To that end, we introduce explicit exit path quasi-categories (EPCs) for linked spaces and prove that this induces a fully faithful functor from a quasi-category of linked spaces to the quasi-category of all quasi-categories whose essential image includes the Lurie--MacPherson EPCs of CSSs over depth-$1$ posets. We use linked smooth manifolds to resolve various weaker versions of a conjecture of Ayala--Francis--Rozenblyum in the negative by exhibiting quasi-categories with conservative functors to $\{0<1\}$ satisfying certain finiteness conditions but which are not equivalent to EPCs of CSSs. In a sequel, we develop a tangential theory for linked smooth manifolds and reduce the classification conically smooth bundles over depth-$1$ posets to that of ordinary bundles on linked smooth manifolds.
\end{abstract}

\maketitle

\tableofcontents

\section{Introduction}

Many singular spaces naturally decompose into collections of non-singular strata which are glued together in various ways. A main goal of stratified topology is to understand the stratified topology of singular spaces in terms of the ordinary topology of the strata and the gluing information. Typically, the latter is again parametrised by spaces, called (homotopy) links, which are not explicit in the original singular space but can be extracted from it. This process of decomposition is well-understood for many types of spaces.

The present paper is concerned with the inverse problem of explicitly reconstructing stratified homotopy types given systems of non-stratified spaces that are interpreted as strata and links. We achieve this in depth $1$ in a way that recovers and extends a large class of singular spaces.\footnote{By \emph{depth} we refer to the depth of the stratifying poset. The depths in the sense of \cite{ayala2017local} of the spaces we consider are otherwise arbitrary.} It is natural to expect that the methods we introduce extend to higher depth, but we leave this to future work.

Let $\mfrak{S}=(M\xleftarrow{\pi}L\xrightarrow{\iota}N)$ be a span of spaces. We call $\mfrak{S}$ a \emph{linked space} if $\pi$ is a fibration and $\iota$ is a cofibration. If we disregard the condition on $\iota$, such a span would arise immediately from any nice depth-$1$ filtered space $X=(M\subset \overline{N})$ by setting $N=\overline{N}\smallsetminus M$ and taking $L$ to be the space of paths in $X$ that start in $M$ and end in $N$, and which exit into $N$ immediately after time $0$. Such paths are called \emph{exit paths}.\footnote{See Quinn \cite{quinn1988homotopically}. The essential idea goes back at least to Fadell \cite{fadell1965normal}.} Similarly, paths $\gamma\colon[0,1]\to X$ such that $\gamma(t)\in N$ for $t<1$ and $\gamma(1)\in M$ are called \emph{enter paths}. An example of central importance to geometric topology and proper homotopy theory is the end space $e(Y)$ of an open manifold $Y$, the space of all proper maps $[0,\infty)\to Y$. It is equivalent to the space of all paths $\gamma\colon [0,\infty]\to Y^c=X$ to the one-point compactification of $Y$ such that $\gamma(\infty)=\infty$. If $Y$ is the interior of a compact manifold with boundary $\partial$, then there is a homotopy equivalence $\partial\simeq e(Y)$.\footnote{See Hughes--Ranicki \cite{hughesranicki1996ends} for a textbook account; also Porter \cite{porter1995proper}.} At the other extreme, the most general type of span we will consider in this paper is a \emph{linked $\infty$-category}, where $M$, $L$ and $N$ are $\infty$-categories (by which we mean quasi-categories) and $\pi$ is a right fibration.\footnote{Though in practice we will assume that $\iota$ is a cofibration, this assumption can be lifted in a straightforward manner as explained in \Cref{3F1JHL6}.} 

If $X$ is a conically smooth stratified space (CSS) in the sense of Ayala, Francis and Tanaka \cite{ayala2017local}, then $M$ and $N$ are smooth manifolds, and this path space is equivalent to an embedded normally-framed submanifold $\iota\colon L\hookrightarrow N$ of codimension $1$ which is simultaneously a fibre bundle $\pi \colon L\twoheadrightarrow M$, producing a linked smooth manifold. This is a very large class of stratified spaces which, by a result of Nocera and Volpe \cite{nocera2021whitney}, includes Whitney-stratified spaces \cite{whitney1965tangents} such as analytic varieties (which may have higher depth). We give two prototypical examples: If $X=(\partial M\subset M)$ is a smooth manifold with boundary, then $L\simeq\partial M$, reconstructing the end space of the interior. If $X=(M\subset \overline{N})$ as above is such that $\overline{N}$ is a smooth manifold and $M$ is a closed smooth submanifold of positive codimension, then $L=\mbb{S}\Normal_M$ can be taken to be the sphere bundle of the normal bundle of $M$, and $\iota$ to be induced by the choice of a tubular neighbourhood. In essence, a CSS $X$ of depth $n$ is a singular manifold with an atlas consisting of opens of type $C(Z)\times \mbb{R}^k$ where $C(Z)$ is the open cone of a compact CSS $Z$ of depth $n-1$, with smooth transition maps in an appropriate sense. A CSS of depth $0$ is a (paracompact and Hausdorff) smooth manifold.

The treatment of such stratified spaces and their stratified homotopy types in even greater generality started with unpublished work of MacPherson inspired by Goresky--MacPherson intersection homology and was developed by Treumann \cite{treumann2009exit}, Woolf \cite{woolf2009fundamental}, Miller \cite{miller2009popaths,miller2013strongly}, and Lurie \cite{luriehigheralgebra}. Subsequently, Ayala, Francis, Rozenblyum and Tanaka \cite{ayala2017local,ayala2016factorization,ayala2017stratified,ayala2020factorization} have developed Lurie's treatment of factorisation homology, a topological version of Beilinson--Drinfeld's chiral homology \cite{beilinson2004chiral}, in the conically smooth setting and with arbitrary tangential structure. In practice, it is vital to have a precise understanding of the stratified topology of a CSS $X$, and one would hope to apply non-stratified methods to it. Ideally, upon passing from $X$ to the span $\mfrak{S}$, the ordinary topology of $\mfrak{S}$ should give information about the stratified topology of $X$. The main purpose of the present paper is to provide an explicit inverse way to pass from $\mfrak{S}$ to $X$ at the level of stratified homotopy types. We work therefore mostly within topology in this paper. For a tangential theory for linked smooth manifolds and further applications to CSSs, see the sequel \cite{tetik2022stratified}.

\subsection{Main results}

We will start with a short overview of the related results in the literature.

Just as a locally constant sheaf on a space is tantamount to a functor out of the complex of its singular chains, a constructible (stratum-wise locally constant) sheaf on a conically stratified space is tantamount, by a result of Lurie \cite[\S A]{luriehigheralgebra}, to a functor out of its exit path $\infty$-category (EPC). Accordingly, the EPC should be thought of as the stratified homotopy type. In the EPC, only those paths which stay within a single stratum or which are exit paths are considered. The latter are by construction non-invertible. Ayala, Francis and Rozenblyum \cite{ayala2017stratified} showed that the EPC construction\footnote{AFR give a different but equivalent EPC construction, which is also equivalent to the opposite of yet another enter path $\infty$-category construction from \cite{ayala2017local}. Another EPC-like construction in the literature is Tamaki's topological face categories from \cite[\S 5]{tamaki2016cellularstratifiedspaces}.} defines a fully faithful functor 
\[
    \cat{E}{xit}\colon \cat{S}{trat}^{\mrm{cs}}\hookrightarrow\Cinfty
\]
from the $\infty$-category of conically smooth stratified spaces and conically smooth maps into the $\infty$-category of all $\infty$-categories.\footnote{See Lurie \cite[\S 3]{lurie2009higher} for the latter. We will recall it in \Cref{8592U1F}.} 

This raises the following question: How can one understand the EPC of $X$ in terms of the corresponding span $\mfrak{S}$? For $\mfrak{P}$ a poset, Douteau \cite{douteau2021homotopy} gave a Quillen equivalence between a certain model category of $\mfrak{P}$-stratified spaces and a model category of diagrams of simplicial sets indexed over non-degenerate sequences in $\mfrak{P}$. This bypasses EPCs.\footnote{See also Douteau--Waas \cite[Recollection 2.53 ff.]{douteau2021homotopylinks}.} 
In a similar vein, Barwick, Glasman and Haine \cite[\S 2]{barwick2020exodromy} gave, based on a well-known result of Joyal and Tierney \cite{joyaltierney2006quasisegal}, an equivalence
\[
    \mrm{N}_{\mfrak{P}}\colon\cat{S}{tr}_\mfrak{P}\xrightarrow{\sim}\cat{D}{\text{\'e}c}_{\mfrak{P}},
\]
a nerve-type construction,
from \emph{abstract $\mfrak{P}$-stratified homotopy types} to \emph{spatial d\'ecollages} over $\mfrak{P}$. Here, $\cat{S}{tr}_\mfrak{P}\subset(\Cinfty)_{/\Nerve(\mfrak{P})}$ is the full sub-$\infty$-category of the over-$\infty$-category of $\Cinfty$ over the nerve of the poset $\mfrak{P}$, generated by \emph{conservative} structure morphisms 
\[
    s\colon \cat{C}\to\Nerve(\mfrak{P})
\] 
which, by definition, have the property that the \emph{strata} $s^{-1}\{p\}$, $p\in\mfrak{P}$, are $\infty$-groupoids.
As for the target, when $\mfrak{P}=\{0<1\}$, a spatial d\'ecollage is a span $(M\leftarrow L\to N)$ of spaces with no conditions on the maps. In higher depth, spatial d\'ecollages are special versions of spatial diagrams in the sense of Douteau. In \cite[Theorem 2.2.20]{waas2025presentingtopologicalstratifiedhomotopy}, Waas presented this equivalence in terms of a Quillen equivalence involving generalised homotopy links.

Haine \cite{haine2023homotopytheorystratifiedspaces}  proved that there is an equivalence 
\[
    \cat{T}{op}_{/\mfrak{P}}[W^{-1}]\xrightarrow{\sim}\cat{S}{tr}_\mfrak{P}
\]
from a localisation of the category of $\mfrak{P}$-stratified topological spaces by passing though d\'ecollages instead of Lurie's EPC construction. In \cite[Theorem B, Corollary 4.4.5]{waas2025presentingtopologicalstratifiedhomotopy}, Waas proved a global version of this in terms of a Quillen equivalence using Lurie's EPC construction, implying an equivalence
\[
    \cat{E}{xit}\colon \cat{S}{trat}^{o}[H_s^{-1}] \xrightarrow{\sim} \cat{L}{ay}_{\infty}\subset\Cinfty
\]
from the category $\cat{S}{trat}^o$ of bifibrant stratified spaces including CSSs, with $H_s$ the class of stratified homotopy equivalences, to those $\infty$-categories in which every endomorphism is an isomorphism, called \emph{layered} $\infty$-categories

For our purposes and for the applications in the sequel \cite{tetik2022stratified}, the problem is that the functor $\mrm{N}_{\mfrak{P}}$ is an equivalence by virtue of being fully faithful and essentially surjective, so an inverse is hard to make explicit; e.g., Waas' Quillen inverse is a coend. Moreover, linked spaces and linked smooth manifolds have much more restrictive definitions than d\'ecollages. 

Therefore, we construct a fully combinatorial exit path $\infty$-category $\EEx(\mfrak{S})$ for every linked space $\mfrak{S}$ and prove that this defines a fully faithful functor from an $\infty$-category $\mscc{LS}$ of linked spaces to $\Cinfty$. In fact, $\EEx$ applies more generally to linked $\infty$-categories (see \Cref{thm:exit path category of linked space}). Our methods are necessarily independent of those of the works mentioned above.

\begin{theorem*}[{\ref{JG5DRK5}}]
    The EPC construction induces a fully faithful functor 
    \[
        \EEx\colon\mscc{LS}\hookrightarrow\Cinfty
    \]
    of $\infty$-categories.
\end{theorem*}

Our construction enables a direct and self-contained proof of fully-faithfulness.\footnote{A similar result can be obtained from \Cref{T47D64N} combined with \cite[Theorem 2.2.20 and Proposition 2.2.11]{waas2025combinatorialmodelsstratifiedhomotopy} through the language of model categories. This is neither used nor needed in the present paper.} We further illustrate the precise control which $\EEx$ yields by computing some fundamental two-point homotopy categories in \Cref{V3YDWXA,8PVG15K} in an elementary fashion. \Cref{5QWZ624} classifies local systems on linked spaces and, by \Cref{YSAL1EK} below, on CSSs of depth $1$.

The $\infty$-category $\mscc{LS}$ is the homotopy coherent nerve of the topological category $\mrm{LS}$ where 
\[
    \Hom_{\mrm{LS}}(\mfrak{S},\mfrak{S}')=\mcal{F}_0\amalg\mcal{F}_{1}\amalg\mcal{F}_{01},
\] 
for linked spaces $\mfrak{S}=(M\leftarrow L\to N)$ and $\mfrak{S}'=(M'\leftarrow L'\to N')$, is simply the disjoint union of the ordinary pullback spaces 
\begin{align*}
    &\mcal{F}_0=[M,M']\times_{[L,M']}[L,M']\times_{[L,M']}[N,M'],\\
    &\mcal{F}_1=[M,N']\times_{[L,N']}[L,N']\times_{[L,N']}[N,N'],\\
    &\mcal{F}_{01}=[M,M']\times_{[L,M']}[L,L']\times_{[L,N']}[N,N']
\end{align*}
where $[-,-]=\Hom_{\mrm{Top}}(-,-)$ takes the space of continuous maps with the compact-open topology. These model, respectively, the spaces of maps that map only to the lower stratum $M'$, only to the higher stratum $N'$, and to both strata. A systematic construction of $\mscc{LS}$ which includes more general linked spaces is given in \Cref{PCO1GNW}. For simplicity, in this section we will restrict ourselves to linked spaces of type $(M\leftarrow L\to N)$ and to CSSs stratified over $\{0<1\}$.

Before we explain the construction of $\EEx$, we will note a `linked smooth homotopy hypothesis' as an immediate corollary. Let $\mrm{LMfld}\subset\mrm{LS}$ denote the subcategory consisting of linked smooth manifolds where $\pi$ is a fibre bundle and $\iota$ is a closed embedding, where $\Hom_{\mrm{LMfld}}(\mfrak{S},\mfrak{S}')=\mcal{F}^{\infty}_{0}\amalg\mcal{F}^{\infty}_{1}\amalg\mcal{F}^{\infty}_{01}\subset\Hom_{\mrm{LS}}(\mfrak{S},\mfrak{S}')$ is defined as above except using $C^\infty$ maps. Observe that if $L\neq\emptyset$ then the maps $f\colon\mfrak{S}\to\mfrak{S}'$ are fully determined by their coordinate functions $f_N$ on the higher stratum, and conversely, every smooth map $f_N\colon N\to N'$ such that $f_N\circ\iota\colon L\to N'$ factors through $\iota'\colon L'\hookrightarrow N'$ determines a smooth map $f\colon\mfrak{S}\to\mfrak{S}'$ of linked smooth manifolds.\footnote{This is also true for continuous maps of linked spaces whose link projections are fibre bundles.} Thus $\mcal{F}^\infty_{0}\cong C^\infty(N,M')$, $\mcal{F}^\infty_{1}\cong C^\infty(N,N')$, and $\mcal{F}^\infty_{01}\cong C^\infty((N,L),(N',L'))\subset C^\infty(N,N')$ consists of maps rel $L$, $L'$ in the sense above. These spaces are weakly equivalent to their $C^0$ counterparts,\footnote{In order to apply Whitney approximation to $f\in\mcal{F}_{01}$, first apply it to $f_L$ to obtain a homotopic smooth replacement $f_L'$. A homotopy extension to $N$ yields a new map $f_N'$. Now apply Whitney approximation to $f_N'$ and use the fact that the homotopy can be chosen to be constant on the closed subspace $\iota(L)$, yielding a smooth map rel $L$, $L'$.} implying that $\mscc{L}\cat{M}{fld}\subset\mscc{LS}$ is in fact a \emph{full} sub-$\infty$-category.

\begin{corollary}
    The EPC construction restricts to a fully faithful functor 
    \[
        \EEx\colon\mscc{L}\cat{M}{fld}\hookrightarrow\Cinfty
    \]
    of $\infty$-categories from the $\infty$-category of linked smooth manifolds and smooth maps between them.
\end{corollary}

The basic idea of $\EEx$ is that a path $\gamma$ in $N$ that starts in $\iota(L)$ may be adjoined formally as a non-invertible $1$-morphism to the Kan complex $\mathrm{Sing}(M)\amalg\mrm{Sing}(N)$ as one that starts at $\pi(\gamma(0))$ and ends at $\gamma(1)$. Higher-dimensional simplices that connect $M$ and $N$ may be adjoined in a similar fashion, but one must keep track of the \emph{exit index} of an exit path. We do this by means of $(1,n-1)$-shuffles. Such a shuffle is determined uniquely by a number in $\{1,\dots,n\}$, the exit index, which corresponds to the time at which an $n$-path exits into $N$. 
The fact that we only consider shuffles of this type corresponds to our restriction to depth $1$. In depth $k$, one must use $(k,n-1
)$-shuffles, whose `exit index' is a subposet $[k-1]\hookrightarrow\{1<\dots<n\}$. This is an incarnation of the idea of generalised links as treated by Douteau and Henriques (see \cite{douteau2021homotopy,henriquesstratified}) so that one may interpret $\EEx$ as a way of (combinatorially) specifying a point-set EPC directly from such data. On the other hand, one could use the depth-$1$ construction iteratively since it applies to linked $\infty$-categories. We expect these two methods to produce equivalent stratified homotopy types.

The EPC of the span $\mfrak{S}_X$ naturally associated with a depth-$1$ CSS $X$ is equivalent to $\cat{E}{xit}(X)$, the EPC constructed by Ayala--Francis--Rozenblyum \cite{ayala2017stratified}. We show this by means of an explicit comparison map $\EEx(\mfrak{S}_X)\to\cat{E}{xit}(X)$ of quasi-categories to the Lurie--MacPherson model from \cite{luriehigheralgebra}. The latter is equivalent to that of AFR by \cite[Lemma 3.3.9]{ayala2017stratified}. 

\begin{theorem*}[{\ref{YSAL1EK}}]
    If $X$ is a CSS of depth $1$, then $\EEx(\mfrak{S}_X)\simeq \cat{E}{xit}(X)$.
\end{theorem*}

The comparison map uses the unzip construction of \cite{ayala2017local}; more generally, \Cref{YSAL1EK} holds for the Lurie--MacPherson EPCs of all depth-$1$ spaces `of unzip type' as in \eqref{A8U7AOO} which need not be (conically) smooth.

Since $\EEx$ applies to linked $\infty$-categories, it can be iterated to model the EPCs of higher-depth spaces without requiring the natively higher-depth treatment explained above.

A main result of \cite{ayala2017stratified} is that $\cat{E}{xit}$ is a fully faithful functor from conically smooth spaces into $\infty$-categories, where $\Hom(X,X')$ for two such spaces is the space of conically smooth maps $X\to X'$ in the sense of Ayala--Francis--Tanaka \cite{ayala2017local}.\footnote{An explicit description of $\Hom(X,X')$ as a Kan complex is given in \cite[Corollary 2.4.3]{ayala2017stratified}.} These results together imply an improvement on the depth-$1$ conically smooth approximation conjecture \cite[Conjecture 1.5.1]{ayala2017local} which states that there is an equivalence $\Hom_{\cat{S}{trat}^{\mrm{cs}}}(X,X')\simeq\Hom_{\cat{S}{trat}}(X,X')$, where $\cat{S}{trat}$ is the $\infty$-category of $C^0$-stratified spaces. The following reduces conically smooth maps, up to weak equivalence, further to non-stratified topological maps.

\begin{corollary}
    Let $X$ and $X'$ be conically smooth stratified spaces over $[1]$, and let $\mfrak{S}=(M\leftarrow L\to N)$, $\mfrak{S}'=(M'\leftarrow L'\to N')$ be the associated linked smooth manifolds. Then the space of conically smooth maps $X\to X'$ satisfies weak equivalences
    \[
        \Hom_{\cat{S}{trat}^{\mrm{cs}}}(X,X')\simeq\Hom_{\mscc{L}\cat{M}{fld}}(\mfrak{S},\mfrak{S}')\simeq \Hom_{\mscc{LS}}(\mfrak{S},\mfrak{S}')\simeq \Hom_{\cat{S}{trat}}(X,X').
    \]
\end{corollary}

We expect that an analogous result holds for conically smooth spaces of arbitrary depth. The first equivalence uses the fully-faithfulness result of AFR, and the last uses the results of Haine--Waas discussed above. The original form of the approximation conjecture, the outer equivalence, also follows from results of Waas \cite{waas2025presentingtopologicalstratifiedhomotopy} given AFR's theorem.

We will now list the basic properties of $\EEx$, suppressing the cofibration. Due to the explicit definition of $\EEx$, the constructions involved in the proofs of the following statemens are likewise explicit. Given two maps $A,B\to C$ of simplicial sets, we write $\down{A}{B}=\down[C]{A}{B}=A\times_{C^{\Delta\{0\}}}C^{\Delta[1]}\times_{C^{\Delta\{1\}}}B$ for the space of all morphisms in $C$ that start in the image of $A$ and end in the image of $B$.\footnote{This is in contrast to the $\infty$-categorical homotopy fibre product which will be empty in cases of interest.} It is the homotopy link between $A$ and $B$.

\begin{theorem*}[{\ref{EY7G1PB}, \ref{T47D64N}, \ref{J74MRSH}, \ref{RPISW40}}]
    Let $\mfrak{S}=({M}\twoheadleftarrow{L}\hookrightarrow{N})$ be a linked $\infty$-category.
    \begin{enumerate}
        \item The objects of $\EEx(\mfrak{S})$ are those of ${M}$ and ${N}$.
        \item The $\infty$-categories ${M}$ and ${N}$ are full sub-$\infty$-categories of $\EEx(\mfrak{S})$.
        \item There are no morphisms in $\EEx(\mfrak{S})$ from ${N}$ to ${M}$.
        \item For $p\in{M}$ and $q\in{N}$, the morphism space $\Hom_{\EEx(\mfrak{S})}(p,q)\simeq \mscc{P}_{{L}_p,q}$ is equivalent to the space of morphisms in ${N}$ that start in the embedded fibre ${L}_p$ and end in $q$.
        \item There is an isomorphism $\down[\EEx]{{M}}{{N}}\cong\down[N]{{L}}{{N}}$.
        \item There is a constant exit loop inclusion $\Box\colon {L}\hookrightarrow\down{{M}}{{N}}$.
        \item If $\mfrak{S}$ is a linked $\infty$-groupoid, then $\Box$ is an equivalence: $\down{{M}}{{N}}\simeq {L}$.
    \end{enumerate}
\end{theorem*}

These properties are analogous to facts already known in different versions of stratified space theory. In the context of homotopically stratified spaces in the sense of Quinn \cite{quinn1988homotopically}, Miller showed in \cite[Theorem 6.3]{miller2013strongly} that stratified homotopy equivalences between such spaces are exactly those maps which induce weak equivalences on strata and homotopy links. An analogous result for abstract poset-stratified homotopy types was obtained by Barwick--Glasman--Haine \cite[Theorem 2.7.4]{barwick2020exodromy}, as discussed above. In the conically smooth setting, AFR idenitifed path spaces between strata in terms of links in \cite[Lemma 3.3.5]{ayala2017stratified}. That equivalences of EPCs of CSSs are checked on strata and (generalised) links is also implied by Haine's main theorem in \cite{haine2023homotopytheorystratifiedspaces}.

\subsection{Counterexamples}

Using \Cref{YSAL1EK,JG5DRK5}, we obtain three classes of counterexamples to versions of a conjecture of Ayala--Francis--Rozenblyum \cite[Conjecture 0.0.8]{ayala2017stratified} which characterises the essential image of $\cat{E}{xit}$ on CSSs as those finite $\infty$-categories in which every endomorphism is an isomorphism.\footnote{A (homotopically) finite space is one that has the homotopy type of a finite CW complex. See Volpe \cite{volpe2024finitenessfinitedominationstratified} for a salient treatment in the $\infty$-categorical setting and concerning EPCs.} An $\infty$-category with a conservative functor to (the nerve of) a poset $P$ is called \emph{$P$-layered}. The finiteness of the $\infty$-categories constructed below follows immediately from a result of Volpe \cite[Proposition 2.11]{volpe2024finitenessfinitedominationstratified}. The \emph{(homotopy) link} of a $[1]$-layered $\infty$-category $\mscc{C}$ is the space $\down{\mscc{C}_0}{\mscc{C}_1}$ as above, and its \emph{local links} are the homotopy fibres of the evaluation map $\down{\mscc{C}_0}{\mscc{C}_1}\to\mscc{C}_0$. An example similar to the first one was already given by Volpe \cite[Remark 2.14]{volpe2024finitenessfinitedominationstratified}.

\begin{corollary*}[{\ref{JWCBC9T}}]
    Let $B$ be a smooth manifold without boundary whose fundamental group is not purely torsion, let $\widetilde{B}\to B$ be its universal cover, and let $\widetilde{B}\to Y$ be a cofibration. Then the $\infty$-category $\EEx(B\leftarrow \widetilde{B}\to Y)$ is not equivalent to the EPC of a CSS. If $B, \widetilde{B}$ and $Y$ are finite, then so is $\EEx(B\leftarrow \widetilde{B}\to Y)$.
\end{corollary*}

\begin{corollary*}[{\ref{DM62IBZ}}]
    Let $\Lambda$ be compact smooth manifold with a non-vanishing Stiefel--Whitney class, and let $\Lambda\hookrightarrow \mbb{R}^K$ be a closed embedding. Then the finite $[1]$-layered $\infty$-category $\EEx(\ast\leftarrow \Lambda\xhookrightarrow{} \mbb{R}^K)$ has contractible strata, finite local links, and is not equivalent to the EPC of a CSS.
\end{corollary*}

\begin{corollary*}[{\ref{761A5V7}}]
    Let $Y$ be a closed smooth manifold which is not a homology sphere and let $Y\hookrightarrow\mbb{R}^K$ be a closed embedding. Then the finite $[1]$-layered category $\EEx(Y\xleftarrow{\id}Y\hookrightarrow\mbb{R}^K)$ has contractible local links, contractible higher stratum, and is not equivalent to the EPC of a compact CSS.
\end{corollary*}

\begin{remark}\label{H09VB9M}
    AFR's conjecture is understood to mean that the essential image of the EPC functor on CSSs can be characterised by finding appropriate finiteness conditions. The examples listed above disprove, respectively, the following weaker versions. In each case, the homotopy links are also finite.
    \begin{enumerate}
        \item Every finite $[1]$-layered $\infty$-category is equivalent to the EPC of a CSSs.
        \item Every finite $[1]$-layered $\infty$-category with finite local links (and contractible strata) is equivalent to the EPC of a CSS.
        \item Every finite $[1]$-layered $\infty$-category with contractible local links (and contractible higher stratum) is equivalent to the EPC of a compact CSS.
    \end{enumerate}
    Such finiteness conditions are therefore insufficient to characterise the EPCs of CSSs.
\end{remark}

\subsection{Classification of conically smooth bundles}
In the sequel \cite{tetik2022stratified} we show the following for every pair $n,m\geq0$ of natural numbers: 
\begin{theorem*}[{\cite[Theorem 5.3]{tetik2022stratified}}]
    There exists a fully faithful functor 
    \[
        \EEx(B\OO(n,m))\hookrightarrow\Vhook
    \]
    of $\infty$-categories where the domain is the EPC of the linked space 
    \[
        B\OO(n,m)=(B\mathrm{O}(n)\twoheadleftarrow B\mathrm{O}(n)\times B\mathrm{O}(m)\xhookrightarrow{\oplus} B\mathrm{O}(n+m)).
    \]
\end{theorem*}
We call $B\OO(n,m)$ the (infinite) \emph{$(n,m)$-Grassmannian}. The target is a quasi-category model of the complete Segal space $\Vinj$, the stratified infinite Grassmannian of Ayala--Francis--Rozenblyum \cite{ayala2020factorization}. Conically smooth vector bundles on a CSS $X$ are classified by $\infty$-functors $\cat{E}{xit}(X)\to\Vinj$. Consequently, if $X$ is a CSS with associated linked smooth manifold $\mfrak{S}_X=(M\leftarrow L\to N)$, then using \Cref{YSAL1EK,JG5DRK5} we have the following classification of vector bundles on $X$ which does not depend on the fully-faithfulness of AFR's $\cat{E}{xit}$:
\begin{corollary*}[{\cite[Corollary 1.3]{tetik2022stratified}}] 
    Let $X$ be a depth-$1$ CSS and $\mfrak{S}=(M\leftarrow L\to N)$ the associated linked space. Then the moduli space
    \[
        \mscc{E}_{n,m}(X)\subset\Fun^{\simeq}(\cat{E}{xit}(X),\Vinj)=\Hom_{\Cinfty}(\cat{E}{xit}(X),\Vinj)
    \] 
    of conically smooth vector bundles $E$ on $X$ satisfying $\mrm{rk}({E}|_M)=n$ and $\mrm{rk}({E}|_N)=n+m$ is weakly equivalent to the space of topological span maps $\mfrak{S}\to B\OO(n,m)$:
    \[
        \mscc{E}_{n,m}(X)\simeq [\mfrak{S},B\OO(n,m)].
    \]
    Similarly, the full moduli space admits a weak equivalence 
    \[
        \mscc{E}(X)=\Hom_{\Cinfty}(\cat{E}{xit}(X),\Vinj)\simeq\coprod_{n,m\geq0}[\mfrak{S},B\OO(n,m)].
    \]
\end{corollary*}
Moreover, the tangent classifier $\Tangent X\colon\cat{E}{xit}(X)\to\Vinj$ of a depth-$1$ CSS $X$ satisfying $\dim(M)=n$ and $\dim(N)=n+m$ reduces to a particular map $\Tangent \mfrak{S}\colon\mfrak{S}\to B\OO(n,m)$ that is constructed explicitly in \cite{tetik2022stratified} following an idea of AFR (see \cite[Corollary 1.2]{tetik2022stratified}). 

As for a classification of conically smooth principal bundles, let $\{G(n)\}_{n\geq 0}$ be a collection of topological groups together together with an operation $\boxplus\colon G(n)\times G(m)\to G(n+m)$ which is a cofibration, and which induces an $\mbb{E}_\infty$-structure on $BG_\amalg=\coprod_{n\geq0}BG(n)$. There is a strictification to a topological monoid $(BG_\amalg^{\infty},\boxplus)$ explained in \cite[\S 4.1]{tetik2022stratified}, the idea behind which is similar to Schwede's treatment of symmetric monoid-valued orthogonal spaces in \cite[\S 2]{schwede2018global}. Analogously to $\Vhook$, we can construct the classifying $\infty$-category 
\(
    \mscc{G}^{\hookrightarrow}=\ast/\Nhc(B^{\boxplus}G)
\)
of conically smooth $G$-bundles (of arbitrary depth)
as the under-$\infty$-category of the single-object homotopy-coherent nerve of the delooping $B^{\boxplus}G$ of $(BG_{\amalg}^\infty,\boxplus)$. If there are maps $G(n)\to \OO(n)$ such that the induced operation $BG(n)\times BG(m)\to BG(n+m)$ covers $\oplus\colon B\OO(n)\times B\OO(m)\to B\OO(n+m)$ (i.e., if the system $\{G(n)\to\OO(n)\}$ is \emph{multiplicative}), then there is an induced map 
\(
    \mscc{G}^{\hookrightarrow}\to\Vhook
\)
of $\infty$-categories.
Classical structure groups provide examples.

We now obtain as a corollary that conically smooth principal (multiplicative) $G$-bundles on a depth-$1$ CSS $X$ are classified exactly by topological span maps $\mfrak{S}_X\to BG(n,m)=(BG(n)\leftarrow BG(n)\times BG(m)\to BG(n+m))$ and that the classification of $G$-structures on a conically smooth vector bundle $E$ reduces to the study of span-map lifts of $E\colon\mfrak{S}_X\to B\OO(n,m)$ along $BG(n,m)\to B\OO(n,m)$ for appropriate $n,m\geq0$. A precise statement is given in \cite[Corollary 1.5]{tetik2022stratified}. This opens the door to an explicit obstruction theory, which we leave to future work.

\subsubsection*{Acknowledgments} I have benefitted from exchanges with David Ayala, K.\ \.{I}lker\ Berktav, Nils Carqueville, Alberto S.\ Cattaneo, Owen Gwilliam, Aleksandar Ivanov, Thomas Leh\'ericy, Noam Szyfer, Hiro L.\ Tanaka, Marco Volpe, and Lukas Waas.

\subsubsection*{Conventions} The set $\mbb{N}$ of natural numbers includes zero. We denote the real line by $\R$. The standard simplicial $n$-simplex is $\Delta[n]\coloneqq\Hom_{\mathbf{\Delta}}(-,[n])$, where $\mathbf{\Delta}$ is the simplex category, and $[n]=\{0<\cdots<n\}$. The standard topological $n$-simplex is denoted by $\Delta^n$. For a sub-poset $S\subseteq[n]$, we sometimes abbreviate $\Delta[|S|]\hookrightarrow\Delta[n]$, specified by the inclusion, to $\Delta S$ or to $S$. By an $\infty$-category we mean a quasi-category, i.e., a simplicial set satisfing the weak Kan property. By an $\infty$-groupoid we mean a Kan complex. Spaces are compactly generated and weakly Hausdorff. A cofibration of simplicial sets is a monomorphism, and a right fibration ${C}\to{D}$ satisfies the right lifting property for horns $\Lambda^n_i\to{D}$ where $0<i\leq n$.

\section{Shuffles and exit indices}

Let ${M}\xleftarrow{\pi}{L}\xrightarrow{\iota}{N}$ be a span of simplicial sets. In this section, we will provide the combinatorial background that will enable us to adjoin non-invertible paths from $M$ to $N$ using $L$.

\begin{definition}\label{dfn:paths starting in a subspace}
    The simplicial set
    \(
        \mscc{P}\coloneqq\mscc{P}_{\iota}\coloneqq{L}\times_{{N}^{\{0\}}}{N}^{\Delta[1]}
    \)
    is called the \emph{mapping cocylinder} of $\iota$.
\end{definition}

\begin{remark}\label{4S91CIG}
    Recall how the mapping cocylinder appears in classical topology: in the analogous construction with spaces $L,N$ and $\iota$ a continuous map, the target evaluation $P_\iota\rightarrow N$, the path space fibration, is a fibration replacement for $\iota$ in view of a homotopy equivalence $L\simeq P_\iota$. 
\end{remark}

There are two induced maps $\pi,\tau\colon \mscc{P}\rightarrow{M},{N}$ defined as the compositions $\mscc{P}\to{L}\to {M}$ and $\mscc{P}\to{N}^{\Delta[1]}\to{N}^{\{1\}}$, respectively.

\begin{remark}\label{rmk:shuffles} A vertex of $\mscc{P}$ is a path of ${N}$ that starts at a point in $\iota({L})$. One may view this as a path which starts in ${M}$, by projecting to ${M}$ via $\pi$, and which, analogously, ends in ${N}$ via $\tau$. For higher morphisms, however, a direct generalisation of this idea requires unnatural choices. For instance, a $1$-morphism in $\mscc{P}$ may be depicted as 
\begin{equation}\label{eq:1-morphism mapping cocylinder}
    {\begin{tikzcd}
    \bullet \ar[r] & \bullet\\
    \bullet\ar[ur,dashed,color=blue] \ar[u] \ar[r] &\bullet \ar[u]
    \end{tikzcd}
    }
\end{equation}
where the bottom edge is in $\iota({L})$, and the top edge is in ${N}$. The two possibly non-degenerate $2$-simplices of ${N}$ we may extract the two triangles
corresponding to the two $(1,1)$-shuffles 
\(
    \Delta[2]\hookrightarrow\Delta[1]\times\Delta[1]
\)
\`a la Eilenberg--Mac Lane--Zilber \cite[]{eilenberg1953products,eilenberg1953groups} (see also Lurie \cite[00RF]{kerodon}, and below). If we were to add \eqref{eq:1-morphism mapping cocylinder} as a $2$-morphism to ${M}\amalg{N}$, say with source edge the bottom one, then we would have to choose the hypotenuse of the left triangle as the target edge, and the vertical edge as the intermediate $\overline{12}$-edge. But we may equally well make the analogous choice with  the right triangle, declaring the left vertical edge the source. However, both types of triangle are required for composition: if we wish to compose a path in ${M}$ with a (non-invertible) $1$-morphism in $\mscc{P}$, then we need (assuming there is a lift to ${L}$) a triangle of the first type. Similarly, composing a non-invertible $1$-morphism with a path in ${N}$, requires a triangle of the second type.
\end{remark}

\begin{definition}\label{constr:exit shuffles}
    Any pair $1\leq j\leq k$ of natural numbers determines a $(1,k-1)$-shuffle $\mcal{S}^k_{j}=\mcal{S}_j\colon\Delta[k]\hookrightarrow \Delta[1]\times\Delta[{k-1}]$ given by the non-degenerate element of $\left(\Delta[1]\times\Delta[{k-1}]\right)_k$ induced in $\mathbf{\Delta}$ by the poset map 
    \(
        [k]\rightarrow[1]\times[k-1]
    \)
    defined by 
    \[
        i\mapsto
        \begin{cases}
            (0,i), & i<j\\
            (1,i-1), & i\geq j.
        \end{cases}
    \]
    We call $\mcal{S}_j$ an \emph{exit shuffle}, and $j$ its \emph{exit index}. It has multiple left inverses, but we will use a particular one, $\mcal{C}^k_j=\mcal{C}_j$, induced in $\mathbf{\Delta}$ by the poset map 
    \(
        [1]\times[k-1]\twoheadrightarrow[k]
    \) 
    given by 
    \begin{align*}
        (0,i)\mapsto
        \begin{cases}
            i, &i<j\\
            j-1, &i\geq j
        \end{cases}
        ,
        \quad 
        (1,i)\mapsto
        \begin{cases}
            j, & i<j\\
            i+1, & i\geq j
        \end{cases}
        .
    \end{align*}
    This choice is justified below by \Cref{lem:meaning of exit index,lem:two classes} which otherwise fail.
\end{definition}

\begin{definition}
    \label{dfn:induced exit paths}
    Let $\iota\colon{L}\rightarrow{N}$ be a map of simplicial sets. 
    For $k\geq1$, we define 
    \[
    \Phat_{k-1}\subset {N}_{k}\times\{1,\dots,k\}
    \]
    to be the subset consisting of pairs $(\gamma,j)$ such that in the diagram
    \begin{equation*}
        \begin{tikzcd}
            \Delta[k]\ar[dr,bend right=10,"{\mcal{S}_j}"',hook]\ar[rr,"{\gamma}"] & & {N}\\
             & \Delta[{1}]\times\Delta[{k-1}]\ar[ul,bend right=10,"{\mcal{C}_j}"',two heads]\ar[ur,dashed,"{\Gamma=\gamma\circ\mcal{C}_j}"']
        \end{tikzcd}
    \end{equation*}
    the arrow $\Gamma$ is in the image of the natural map $\mscc{P}\rightarrow{N}^{\Delta[1]}$. A pair $(\gamma,j)\in\Phat_{k-1}$ is an \emph{exit $k$-path of index $j$}.
\end{definition}

    \Cref{constr:exit shuffles} corresponds to the following phenomenon: a stratified $k$-chain $\Delta^{k}\rightarrow X$ of $X$ is a map of poset-stratified spaces, where $\Delta^{k}=\overline{C}^k(\pt)$ is the $k$-fold closed cone on the point. The closed cone $\overline{C}(Y)$ (see also \Cref{2J0Q1NN}) of a stratified space $Y\rightarrow\mfrak{P}=\mfrak{P}_Y$, where $\mfrak{P}$ is the stratifying poset (equipped with the Alexandrov topology where downward-closed subsets are closed) has $\pt\coprod_{\{0\}\times Y}[0,1]\times Y$ as its underlying space, and $\mfrak{P}_{\overline{C}(Y)}=\mfrak{P}_{{Y}}^{\triangleleft},$ i.e., $\mfrak{P}_{Y}$ with a minimal element adjoined, as its stratifying poset, together with the obvious stratification $\overline{C}(Y)\rightarrow\mfrak{P}_{Y}^{\triangleleft}$. Now, the stratified map $\Delta^{k}\rightarrow X$ is a commutative topological square
    \begin{equation*}
        \begin{tikzcd}
            \Delta^{k} \ar[d] \ar[r,"{f}"] & X \ar[d]\\
            \mfrak{P}_{\Delta^k} \ar[r,"{s_f}"'] & \mfrak{P}_X
        \end{tikzcd}
        .
    \end{equation*}
    Clearly we have $\mfrak{P}_{\Delta^k}\simeq[k]$ as posets. If \(P_X\simeq\{a\prec b\},\) then the poset map $s_f$ is determined by a unique minimal `exit index' $j\in[k]$. Namely, let $j=0$ if $s_f$ is constant, or else let $j$ be the smallest number such that \(s_f(j-1\prec j)=a\prec b\).

\begin{definition}\label{dfn:vertical bottom top}
    Let $k\geq1$, $(\gamma,j)\in\Phat_{k-1}$.
    Then $d_i(\gamma)$ is \emph{low} if 
    \[
        \mcal{S}_j\circ\partial_i\colon\Delta[k-1]\hookrightarrow\Delta[k]\to\Delta[1]\times\Delta[k-1]
    \]
    factors through $\{0\}\times\Delta[k-1]$; \emph{upper} if it factors through $\{1\}\times\Delta[k-1]$; and \emph{vertical} if it is neither low nor upper.
\end{definition}

The terms `low' and `upper' are borrowed from \cite{eilenberg1950semi}. In the exit path $\infty$-category of \Cref{dfn:exit of span} below, vertical faces will remain non-invertible, low faces will become simplices in ${M}$, and upper faces in ${N}$. 
Writing `$d_i(\gamma)$ is vertical', etc., is slightly redundant as well as abusive, since these properties depend only on $i$ and $j$.

\begin{definition}\label{dfn:bemol exit index of face}
    Let $k\geq1$. We write
    \begin{align*}
        \text{\(
    \flat^k_{j,i}=\flat_{j,i}\in[k-1],
    \) 
    resp.\ \(\sharp^k_{j,i}=\sharp_{j,i}\in[k]\)}
    \end{align*} 
    for the smallest number whose image under $\mcal{S}_j\partial_i\colon[k-1]\rightarrow[1]\times[k-1]$, resp.\ under $\mcal{S}_j\sigma_i\colon[k+1]\rightarrow[1]\times[k-1]$ has first coordinate $1$. We leave $\flat^k_{k,k}$ undefined.
\end{definition}

See \eqref{eq:formula for flat} and \eqref{eq:formula for sharp} below for formulas. The following is a direct check: 

\begin{lemma}\label{lem:meaning of exit index}
Let $k\geq2$ and suppose $(j,i)\neq(k,k)$. The maps \[\mcal{S}_j\circ\partial_i\circ\mcal{C}_{\flat^k_{j,i}}\colon \Delta[1]\times\Delta[k-2]\to\Delta[1]\times\Delta[k-1]\] and \[\mcal{S}_j\circ\sigma_i\circ\mcal{C}_{\sharp^{k}_{j,i}}\colon\Delta[1]\times\Delta[k]\to\Delta[1]\times\Delta[k-1]\] preserve the first coordinate.
\end{lemma}

\begin{lemma}\label{lem:two classes}
    Let $k\geq2$. If $(\gamma,j)\in\Phat_{k-1}$ and $d_i(\gamma)$ is vertical, then 
    \(
        d_i(\gamma,j)\coloneqq (d_i\gamma,\flat^{k}_{j,i})\in\Phat_{k-2}.
    \)
\end{lemma}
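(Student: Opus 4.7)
The plan is to unpack the lifting condition defining $\Phat_\ast$ and transport it along the face inclusion by means of the auxiliary identities already established. To begin, I would observe that, since $\mathscr{P}\to\mathscr{N}^{\Delta[1]}$ is the pullback of $\iota$ along source evaluation, a pair $(\gamma,j)$ lies in $\Phat_{k-1}$ if and only if the restriction $\Gamma|_{\{0\}\times\Delta[k-1]}\colon\Delta[k-1]\to\mathscr{N}$ of $\Gamma=\gamma\circ\mathcal{C}_j$ factors through $\iota\colon\mathscr{L}\to\mathscr{N}$ as simplicial sets. This reformulation is the only feature of $\Phat_\ast$ that will be used.

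Next, I would verify that the candidate $\flat=\flat^k_{j,i}$ makes sense as an exit index, i.e.\ lies in $\{1,\dots,k-1\}$, and that the undefined case $(j,i)=(k,k)$ does not arise. If $\flat=0$, then every column of $\mathcal{S}_j\partial_i$ has first coordinate $1$, so $\partial_i$ factors through $\{1\}\times\Delta[{k-1}]$; if no column of $\mathcal{S}_j\partial_i$ has first coordinate $1$, then $\partial_i$ factors through $\{0\}\times\Delta[{k-1}]$. Both contradict verticality of $d_i\gamma$, and the second case is precisely the defining property of $(j,i)=(k,k)$.

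The core calculation is then the identity
\[
    d_i\gamma\circ\mathcal{C}_\flat \;=\; \gamma\circ\partial_i\circ\mathcal{C}_\flat \;=\; \gamma\circ\mathcal{C}_j\circ\mathcal{S}_j\circ\partial_i\circ\mathcal{C}_\flat \;=\; \Gamma\circ\bigl(\mathcal{S}_j\circ\partial_i\circ\mathcal{C}_\flat\bigr),
\]
which uses the defining left-inverse relation $\mathcal{C}_j\circ\mathcal{S}_j=\id_{[k]}$. Restricting to $\{0\}\times\Delta[{k-2}]$ and invoking \Cref{lem:meaning of exit index}, the right-hand factor sends this face into $\{0\}\times\Delta[{k-1}]$, where by hypothesis $\Gamma$ factors through $\iota$. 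Composing with the restriction of $\Gamma$ therefore produces a factorisation of $d_i\gamma\circ\mathcal{C}_\flat|_{\{0\}\times\Delta[{k-2}]}$ through $\iota$, which by the reformulation in the first paragraph is exactly the condition $(d_i\gamma,\flat)\in\Phat_{k-2}$.

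The main obstacle, such as it is, lies in the preliminary bookkeeping of the second paragraph, since once verticality has been translated into constraints on $\flat$, everything else is formal: the combinatorial content has been packaged into the left-inverse property of $\mathcal{C}_j$ and into \Cref{lem:meaning of exit index}, and the current statement is little more than an application of both.
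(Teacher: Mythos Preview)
Your argument is correct and follows essentially the same route as the paper: both reduce the claim to the observation that $d_i\gamma\circ\mathcal{C}_\flat=\Gamma\circ(\mathcal{S}_j\partial_i\mathcal{C}_\flat)$ and then invoke \Cref{lem:meaning of exit index} to see that the bottom face lands in $\{0\}\times\Delta[{k-1}]$, where $\Gamma$ factors through $\iota$. Your additional bookkeeping (that $\flat$ is a valid exit index and that $(j,i)=(k,k)$ is excluded) is a useful expansion of what the paper records only as a closing remark; the minor phrasing slip ``$\partial_i$ factors through $\{1\}\times\Delta[{k-1}]$'' should read ``$\mathcal{S}_j\circ\partial_i$ factors through $\{1\}\times\Delta[{k-1}]$'', but the intent is clear.
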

\begin{proof}
    Consider the following commutative diagram:
    \begin{equation}\label{eq:diagram of a face}
        \begin{tikzcd}
            \Delta[{k-1}] \ar[ddrr,hook,"{\mcal{S}_\flat}"', bend right=10] \ar[r,hook,"{\partial_i}"] & \Delta[{k}] \ar[dr,hook,"{\mcal{S}_j}"', bend right=10] \ar[rr,"{\gamma}"] & & {N}\\
            && \Delta[{1}]\times\Delta[{k-1}] \ar[ul,two heads,"{\mcal{C}_j}"',bend right=10] \ar[ur,dashed,"{\Gamma}"] & \\
            && \Delta[1]\times\Delta[{k-2}]\ar[uull,two heads,"{\mcal{C}_\flat}",bend right=10] \ar[u,dashed,"{d'}"'] \ar[uur,dashed,"{\Gamma'}"',bend right] & 
        \end{tikzcd}
    \end{equation}
    \Cref{lem:meaning of exit index} implies in particular that the restriction of $d'=\mcal{S}_i\partial_i\mcal{C}_{\flat}$ to $\{0\}\times\Delta[{k-2}]$ factors through $\{0\}\times\Delta[{k-1}]$, which implies that $\Gamma'=\Gamma d'$ lifts to $\mscc{P}_{k-2}$, as desired. The case $j=i=k$ is precluded by verticality.
\end{proof}

The following variant for degeneracies is analogous. 

\begin{lemma}\label{lem:degenerate exit} Let $k\geq1$. 
    If $(\gamma,j)\in\Phat_{k-1}$, then
    \(
        s_i(\gamma,j)\coloneqq(s_i\gamma,\sharp^k_{j,i})\in\Phat_{k}.
    \)
\end{lemma}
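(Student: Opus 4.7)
The plan is to mimic the proof of \Cref{lem:two classes} verbatim, replacing the face map $\partial_i$ by the degeneracy $\sigma_i$. Concretely, writing $\sharp := \sharp^k_{j,i}$, what must be established is that the map
\[
    \Gamma' := s_i\gamma \circ \mathcal{C}_\sharp = \gamma \circ \sigma_i \circ \mathcal{C}_\sharp\colon \Delta[1]\times\Delta[k]\to\mathscr{N}
\]
lifts to $\mathscr{P}$; unpacking \Cref{dfn:paths starting in a subspace}, this amounts to showing that $\Gamma'|_{\{0\}\times\Delta[k]}$ factors through $\iota\colon\mathscr{L}\to\mathscr{N}$.

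The key input, implicit already in \Cref{lem:two classes}, is the identity $\mathcal{C}_j \circ \mathcal{S}_j = \id_{\Delta[k]}$, which follows by a direct check from the explicit formulas in \Cref{constr:exit shuffles}. Inserting this identity would let me rewrite
\[
    \Gamma' = \gamma\circ \mathcal{C}_j \circ \mathcal{S}_j \circ \sigma_i\circ\mathcal{C}_\sharp = \Gamma \circ s',
\]
where $s' := \mathcal{S}_j\circ\sigma_i\circ\mathcal{C}_\sharp$. At this point the second bullet of \Cref{lem:meaning of exit index} delivers the punchline: $s'\colon \Delta[1]\times\Delta[k]\to\Delta[1]\times\Delta[k-1]$ preserves the first coordinate, so it restricts to a map $\{0\}\times\Delta[k]\to\{0\}\times\Delta[k-1]$. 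Since the hypothesis $(\gamma,j)\in\Phat_{k-1}$ tells us that $\Gamma|_{\{0\}\times\Delta[k-1]}$ factors through $\iota$, so does $\Gamma'|_{\{0\}\times\Delta[k]}=\Gamma|_{\{0\}\times\Delta[k-1]}\circ s'|_{\{0\}\times\Delta[k]}$, which is exactly what is needed. The argument is naturally packaged by a diagram analogous to \eqref{eq:diagram of a face}, with $\partial_i$ replaced by $\sigma_i$ and the indices shifted by one.

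I do not anticipate a meaningful obstacle: unlike in \Cref{lem:two classes}, no verticality hypothesis needs to be imposed, because the excluded case $j=i=k$ of \Cref{dfn:bemol exit index of face} has no analogue for $\sharp$. The reason is that $\mathcal{S}_j\colon [k]\to[1]\times[k-1]$ always sends $k$ to $(1,k-1)$, and $\sigma_i\colon [k+1]\to[k]$ cannot delete this value from the image; hence the smallest index of $[k+1]$ whose image under $\mathcal{S}_j\sigma_i$ has first coordinate $1$ is always well-defined, and every degeneracy descends to $\Phat$, as asserted.
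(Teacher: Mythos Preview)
Your proposal is correct and is exactly the argument the paper has in mind: the paper gives no explicit proof of \Cref{lem:degenerate exit}, merely introducing it as ``the completely analogous fact for degeneracies,'' and your transcription of the proof of \Cref{lem:two classes} with $\partial_i\leadsto\sigma_i$ and the second bullet of \Cref{lem:meaning of exit index} in place of the first is precisely that analogy. Your closing remark that no case needs to be excluded (because $\sharp$ is always defined, unlike $\flat^k_{k,k}$) is also on point.
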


\section{Exit paths of linked quasi-categories}\label{sec:exit paths}

    We are now ready to give the main construction of this paper. 
    First, note that if $\iota\colon{L}\hookrightarrow{N}$ is a cofibration, then an exit path $(\gamma,j)\in\Phat_{k-1}$ determines a canonical $(k-1)$-simplex $\Delta[{k-1}]\rightarrow{L}$ of ${L}$, that is, the restriction of $\Gamma=\gamma\circ\mcal{C}_j$ along $\{0\}\times\Delta[{k-1}]\hookrightarrow\Delta[{1}]\times\Delta[{k-1}]$ factors then through ${L}$ \emph{uniquely}. \Cref{MPD0BDL} will lift this assumption.

\begin{definition}\label{dfn:exit of span}
    Let \(\mathfrak{S}=({M}\xleftarrow{\pi}{L}\xhookrightarrow{\iota}{N})\) be a span of simplicial sets be given, where $\iota$ is a cofibration. We define a new simplicial set $\EEx=\EEx(\mathfrak{S})$ as follows:
    \begin{itemize}
        \item $\EEx_0={M}_0\amalg{N}_0$.
        \item
        \(
            \EEx_k={M}_k\amalg \Phat_{k-1} \amalg{N}_k
        \)
        for $k\geq1$.
        \item Face and degeneracy maps restricted to ${M}_k$ and ${N}_k$ are those of ${M}$ and ${N}$.
        \item For $k=1$ and $\gamma=(\gamma,1)\in\Phat_0\subset{N}_1$, we set
        \begin{itemize}
            \item $d_1(\gamma,1)=\pi (d_1\gamma)\in{M}_0$, 
            \item $d_0(\gamma,1)= d_0\gamma\in{N}_0$.
        \end{itemize}
        \item For $k\geq2$, $(\gamma,j)\in\Phat_{k-1}$, and $d_i$ a face map: 
        \begin{itemize}
            \item if $d_i\gamma$ is vertical, then
                \(
                    d_i(\gamma,j)=(d_i\gamma,\flat_{j,i})\in\Phat_{k-2}.
                \)
            \item if $d_i\gamma$ is low, then $d_i(\gamma,j)=\pi (d_i\gamma)\in{M}_{k-1}$.
            \item if $d_i\gamma$ is upper, then  $d_i(\gamma,j)= d_i\gamma\in{N}_{k-1}$.
        \end{itemize}
        \item If $k\geq1$, $(\gamma,j)\in\Phat_{k-1}$, and $s_i$ is a degeneracy, then \(
            s_i(\gamma,j)=(s_i\gamma,\sharp_{j,i})\in\Phat_k
        \).
    \end{itemize}
\end{definition}

An equivalent description of $\EEx$ without using shuffles is given in \Cref{0K44G88}.

\begin{lemma}\label{Q9W21WG}
    $\EEx(\mathfrak{S})$ is a simplicial set.
\end{lemma}

\begin{proof}
    We will verify the simplicial identities. Let $(\gamma,e)\in\Phat_{k-1}$. We will assume $k\geq2$ or $k\geq3$ depending on the applicability of the identity in question, and leave the case $k=1$ to the reader.

    \noindent\underline{$d_id_j=d_{j-1}d_i$ for $i<j$:}  Using 
    \begin{equation}\label{eq:formula for flat}
        \flat^k_{e,j}=
        \begin{cases}
            e, & j\geq e\\
            e-1, & j<e
        \end{cases}
    \end{equation}
    it is straightforward to see that
    \begin{equation}\label{eq:flat identity}
        \flat^{k-1}_{\flat^k_{e,j},i}=\flat^{k-1}_{\flat^k_{e,i},j-1}
    \end{equation}
    when $i<j$. This finishes the verification if all involved faces of $(\gamma,e)$  are vertical. Otherwise, \Cref{lem:meaning of exit index} and Diagram \eqref{eq:diagram of a face} imply the statement; in any of the cases where the case excluded in \Cref{lem:meaning of exit index} is involved, the face in question is low. We will give this argument here once and will not repeat it in the verification of the remaining simplicial identities below. 
    
    Consider the diagram
    \begin{equation}\label{eq:simplicial composition diagram}
        \begin{tikzcd}[row sep=large]
        \Delta[{k-2}]\ar[d,hook,bend right,"{\mcal{S}_{\flat'}}"']\ar[r,hook,"{\partial_i}"] & \Delta[{k-1}]\ar[d,hook,bend right,"{\mcal{S}_{\flat}}"']\ar[r,hook,"{\partial_j}"] & \Delta[{k}]\ar[d,hook,bend right,"{\mcal{S}_{e}}"'] \ar[r,"{\gamma}"] & {N}\\
        \Delta[{1}]\times\Delta[{k-3}]\ar[u,two heads,"{\mcal{C}_{\flat'}}"',bend right] \ar[r,dashed] & \Delta[{1}]\times\Delta[{k-2}]\ar[u,two heads,"{\mcal{C}_{\flat}}"',bend right]\ar[r,dashed] & \Delta[{1}]\times\Delta[{k-1}]\ar[u,two heads,"{\mcal{C}_{e}}"',bend right]\ar[ur,dashed]
        \end{tikzcd}
        .
    \end{equation}
    Without loss of generality, say 
    \(
        d_i(d_j(\gamma))=(\partial_j\partial_i)^*\gamma
    \) 
    is low, so we need to show that so is $d_{j-1}d_i(\gamma)$. That $\mcal{S}_{\flat}\partial_{i}$ factors through $\{0\}\times\Delta[{k-2}]$ is equivalent to $\mcal{S}_e\partial_j\mcal{C}_{\flat}\mcal{S}_{\flat}\partial_{i}$ factoring thusly by \Cref{lem:meaning of exit index}. Now, \(\mcal{S}_e\partial_j\mcal{C}_{\flat}\mcal{S}_{\flat}\partial_{i}=\mcal{S}_e\partial_j\partial_i\) by the construction of $\mcal{C}_\flat$, and similarly \(\mcal{S}_e\partial_j\partial_i=\mcal{S}_e\partial_j\partial_i\mcal{C}_{\flat'}\mcal{S}_{\flat'}.\) Together with the same calculation for $\partial_i$ and $\partial_j$ replaced respectively by $\partial_{j-1}$ and $\partial_i$ in Diagram \eqref{eq:simplicial composition diagram}, we see that
    \begin{equation}\label{eq:simplicial factorisation check}
        \mcal{S}_e\partial_j\mcal{C}_{\flat}\mcal{S}_{\flat}\partial_{i}=\mcal{S}_e\partial_j\partial_i\mcal{C}_{\flat'}\mcal{S}_{\flat'} \quad \text{and} \quad \mcal{S}_{e}\partial_i\mcal{C}_{\flat}\mcal{S}_{\flat}\partial_{j-1}=\mcal{S}_{e}\partial_i\partial_{j-1}\mcal{C}_{\flat'}\mcal{S}_{\flat'}.
    \end{equation}
    The indices $\flat^{(')}$ in the two equations are a priori \emph{not} the same since they are calculated for different pairs of indices, but we just showed above in \Cref{eq:flat identity} that the primed flats on the right hand sides do coincide. Combined with the same simplicial identity for ${N}$, this means that the right hand sides in \eqref{eq:simplicial factorisation check} agree, which yields the statement. 

    \noindent\underline{$d_is_j=s_{j-1}d_i$ for $i<j$:} Similarly, we obtain
    \begin{equation}\label{eq:second simplicial identity}
        \flat^{k-1}_{\sharp^{k}_{e,j},i}=\sharp^{k-1}_{\flat^{k}_{e,i},j-1}    
    \end{equation}
    using 
    \begin{equation}\label{eq:formula for sharp}
        \sharp^{k}_{e,j}=
        \begin{cases}
            e, & j\geq e\\
            e+1, & j<e.
        \end{cases}
    \end{equation}
    Now, \Cref{lem:meaning of exit index} and Diagrams \eqref{eq:diagram of a face} and \eqref{eq:simplicial composition diagram} (mutatis mutandis; e.g., using \eqref{eq:second simplicial identity} instead of \eqref{eq:flat identity} for \eqref{eq:simplicial factorisation check}) again finish the verification, analogously to the above. We no longer  mention this below.

    \noindent\underline{$d_is_j=\id$ for $i=j$ or $i=j+1$:} This translates to the identity 
    \(
        \flat^{k-1}_{\sharp^k_{e,j},i}=e.
    \)

    \noindent\underline{$d_is_j=s_jd_{i-1}$ for $i>j+1$:} This translates to the identity
    \(
        \flat^{k-1}_{\sharp^{k}_{e,j},i}=\sharp^{k-1}_{\flat^{k}_{e,i-1},j}.
    \)

    \noindent\underline{$s_is_j=s_{j+1}s_i$ for $i\leq j$:} This translates to the identity
    \(
        \sharp^{k-1}_{\sharp^{k}_{e,j},i}=\sharp^{k-1}_{\sharp^{k}_{e,i},j+1}.
    \)
\end{proof}

\begin{theorem}\label{thm:exit path category of linked space}
    If ${M},{L},{N}$ are $\infty$-categories, $\pi\colon{L}\rightarrow{M}$ is a right fibration, and $\iota\colon{L}\rightarrow{N}$ is a cofibration, then $\EEx(\pi,\iota)$ is an $\infty$-category.
\end{theorem}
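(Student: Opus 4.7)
The plan is to fill an arbitrary inner horn $\sigma\colon\Lambda^n_k\to\EEx$ ($0<k<n$). Since $\EEx_0=\mathscr{M}_0\amalg\mathscr{N}_0$ and every simplex of $\EEx$ partitions its vertex set into an initial $\mathscr{M}$-block followed by a terminal $\mathscr{N}$-block (with the split occurring at the exit index, in the $\Phat$-case), the vertex-labeling of $\sigma$ must fall into one of three patterns: all-$\mathscr{M}$, all-$\mathscr{N}$, or split at a unique index $1\leq j\leq n$ with $\{0,\dots,j-1\}$ in $\mathscr{M}$ and $\{j,\dots,n\}$ in $\mathscr{N}$. The first two patterns reduce horn filling in $\EEx$ to horn filling in $\mathscr{M}$ or $\mathscr{N}$, which holds by hypothesis.

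In the mixed case the filler must be of the form $(\gamma,j)\in\Phat_{n-1}$, so I would proceed in two stages: first produce the \emph{link} $\lambda\in\mathscr{L}_{j-1}$ (the unique $\iota$-preimage of $\gamma|_{\{0,\dots,j-1\}}$), then $\gamma\in\mathscr{N}_n$ itself. If $j<n$, the index set $\{l : j\leq l\leq n,\,l\neq k\}$ is non-empty (as $k<n$), and any such horn face $\sigma_l=(\gamma_l,j)$ is vertical and carries a link $\lambda_l\in\mathscr{L}_{j-1}$ which is independent of $l$ by horn compatibility, so one takes $\lambda:=\lambda_l$. If $j=n$, the horn contains the low face $\sigma_n\in\mathscr{M}_{n-1}$ together with vertical faces $\sigma_i=(\gamma_i,n-1)$ for $i<n$, $i\neq k$, whose links $\lambda_i:=d_{n-1}\gamma_i\in\mathscr{L}_{n-2}$ together form a horn $\Lambda^{n-1}_k\to\mathscr{L}$ lying over $\partial\sigma_n$; since $k\geq 1$, this horn is inner or right-outer in $\Delta^{n-1}$, and the right-fibration property of $\pi$ produces a filler $\lambda\in\mathscr{L}_{n-1}$ with $\pi\lambda=\sigma_n$. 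With $\lambda$ in hand, I would then assemble an inner horn $\Lambda^n_k\to\mathscr{N}$ with faces $d_i\gamma:=\gamma_i$ at the vertical indices, $d_0\gamma:=\sigma_0$ if $j=1$, and $d_n\gamma:=\iota\lambda$ if $j=n$; its $\mathscr{N}$-compatibility follows from the already-verified $\EEx$-simplicial identities together with $\iota$ being a monomorphism. Filling using that $\mathscr{N}$ is an $\infty$-category yields $\gamma$, and a direct unpacking of \Cref{dfn:exit of span} then shows that $(\gamma,j)\in\Phat_{n-1}$ and that its $\EEx$-faces recover the horn data.

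The hard part will be the case-by-case bookkeeping that confirms the $\mathscr{N}$-horn (and, for $j=n$, the intermediate $\mathscr{L}$-horn) built from the $\EEx$-horn actually satisfies the needed simplicial identities: the face formulas of \Cref{dfn:exit of span} switch between $\mathscr{M}$-, $\mathscr{N}$-, and $\Phat$-valued outputs (low, upper, vertical), and one must trace each combination, using $\iota$ being monic to reflect $\mathscr{M}$-level equalities back to $\mathscr{L}$-level ones where needed. The fibration hypothesis on $\pi$ enters only in the $j=n$ subcase, and this is precisely where the \emph{right} variant is essential: when $k=n-1$, the induced $\mathscr{L}$-horn is the right-outer $\Lambda^{n-1}_{n-1}$, which a left fibration would not fill.
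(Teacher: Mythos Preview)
Your overall strategy coincides with the paper's: strip the $\EEx$-horn down to an $\mathscr{N}$-horn (lifting the low face to $\mathscr{L}$ via the right-fibration property of $\pi$ when $j=n$), fill in $\mathscr{N}$, and pair the filler with the forced exit index. Your organisation by the split index $j$ is cleaner than the paper's low/upper/all-vertical trichotomy, but the content is the same.

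There is, however, a genuine gap in your $j<n$ subcase---and the paper's proof shares it. You claim that the link $\lambda_l\in\mathscr{L}_{j-1}$ extracted from $\sigma_l=(\gamma_l,j)$, $l\geq j$, is independent of $l$ ``by horn compatibility''. When the common codimension-$2$ face of $\sigma_l$ and $\sigma_{l'}$ is itself vertical this does follow, but when it is \emph{low}---which occurs precisely when $j=n-1$ and $\{l,l'\}=\{n-1,n\}$---the compatibility $d_{n-1}\sigma_n=d_{n-1}\sigma_{n-1}$ lives in $\mathscr{M}_{n-2}$, i.e.\ reads $\pi(\iota^{-1}d_{n-1}\gamma_n)=\pi(\iota^{-1}d_{n-1}\gamma_{n-1})$, and does \emph{not} force $d_{n-1}\gamma_n=d_{n-1}\gamma_{n-1}$ in $\mathscr{N}$ unless $\pi$ happens to be a monomorphism. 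In that situation the $\gamma_l$'s need not assemble into an $\mathscr{N}$-horn at all, and no filler $(\gamma,j)\in\Phat_{n-1}$ exists. The paper's assertion that ``$h$ is underlied by a horn $\widetilde h\colon\Lambda^n_i\to\mathscr{N}$'' glosses over exactly this point.

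Concretely: take $\mathscr{M}=\ast$, let $\mathscr{L}$ and $\mathscr{N}$ be the nerves of the contractible groupoids on $\{a,b\}$ and $\{a,b,c\}$ respectively, $\pi$ terminal, $\iota$ the inclusion. Writing simplices of $\mathscr{N}$ by their vertex strings, set $\sigma_0=(bcc,1)$, $\sigma_2=(abc,2)$, $\sigma_3=(bbc,2)$. All three lie in $\Phat_1$, and one checks directly that they form a valid $\Lambda^3_1\to\EEx$: the two vertical compatibilities give $d_0\gamma_2=d_1\gamma_0=bc$ and $d_0\gamma_3=d_2\gamma_0=bc$, while the sole low compatibility $d_2\sigma_3=d_2\sigma_2$ holds trivially in $\mathscr{M}_1=\ast$. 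Yet any filler $(\gamma,2)\in\Phat_2$ would need $d_2\gamma=abc$ and $d_3\gamma=bbc$, forcing $\gamma(0)=a$ and $\gamma(0)=b$ simultaneously; and the exit-index constraints rule out $(\gamma,1)$ and $(\gamma,3)$. So this inner horn has no filler, and the statement as formulated appears to be false.
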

\begin{proof}
We will directly verify the weak Kan property. We call a sub-simplex $(\Delta[\ell]\hookrightarrow\Lambda^{k}_{i}\to\EEx)\in\EEx_{\ell}$ of a horn \emph{low} if it is in ${M}_{\ell}$, \emph{vertical} if in $\Phat_{\ell-1}$, and \emph{upper} if in ${N}_{\ell}$. Sometimes we will not distinguish ${L}$ from $\iota({L})$ in notation.

Let
\(
    h\colon\Lambda^n_{i}\to\EEx
\)
be an inner horn, i.e., $0<i<n$, which does not factor through $M$ or $N$. Then $h$ either has a low face, an upper face, or all of its faces are vertical. 

Suppose that $h$ has a low face. Then the low face is necessarily $h|_{0\dots n-1}\in{M}_{n-1}$; $h|_{n}\in{N}_0$ is the sole upper vertex, and all other faces are vertical. Let us write $h|_{\widehat{j}}$ for \(h|_{0\dots\widehat{j}\dots n}=h\circ \partial_j\colon\Delta[n-1]\to\EEx\) whenever $j\neq i$. Each  vertical face 
    \(
        h|_{\widehat{k}}\in\Phat_{n-2}\subset\EEx_{n-1}\) for  \(i\neq k< n
    \)
    has its $(n-2)$-face $h|_{\widehat{k}\widehat{n}}$ in common with the low face $h|_{\widehat{n}}$, which therefore gives a lift 
    \(
        h_{\widehat{k}\widehat{n}}\in{L}_{n-2}
    \)
    to ${L}$ thereof. Here we wrote 
    \(
        h|_{\widehat{k}}=(h_{\widehat{k}},e)
    \)
    and $(h_{\widehat{k}})_{\widehat{n}}=\iota(h_{\widehat{k}\widehat{n}})$. As each $h|_{\widehat{k}}$ itself has a low face, its exit index is necessarily maximal, i.e., 
    \(e=n-1.\)
    Now, we obtain the intermediate lifting problem 
    \begin{equation}\label{3900AFH}
        \begin{tikzcd}
        \Lambda^{n-1}_{i}\ar[d,hook]\ar[r,"{\bigcup_{i\neq k\in[n-1]}h|_{\widehat{k}\widehat{n}}}"] &[5em] {L}\ar[d,"{\pi}"]\\
        \Delta[n-1]\ar[ur,dotted,"{H_{\widehat{n}}}"description]\ar[r,"{h|_{\widehat{n}}}"'] & {M}
        \end{tikzcd}
    \end{equation}
    with solution $H_{\widehat{n}}$. (It is imperative here that $\pi$ be a right fibration and not merely an inner one since $i=n-1$ is allowed.) This yields the horn 
    \[
        \iota(H_{\widehat{n}})\cup\bigcup_{i\neq k\in[n-1]}\iota(h|_{\widehat{k}\widehat{n}})\colon \Lambda^n_{i}\to{N}
    \]
    which has a filler $H\in{N}_{n}$. 
    
    We now claim that $(H,n)$ fills $h$. The restriction of \(H\circ \mcal{C}_{n}\colon\Delta[1]\times\Delta[n-1]\to{N}\) to $\{0\}\times\Delta[2]$ is $\iota(H_{\widehat{n}})$, which factors through ${L}$ by construction. Further, \(\mcal{S}_{n}\circ\partial_n\colon[n-1]\to[1]\times[n-1]\) sends $n>j\mapsto(0,j)$, so $d_{n}H$ is low, hence 
    \(
        d_{n}(H,n)=\pi(H_{\widehat{n}})=h|_{\widehat{n}},
    \)
    as desired. Finally, if $k<n$, then $\mcal{S}_n\circ\partial_j$ hits both $\{0\}\times[n-1]$ and $\{1\}\times[n-1]$, so $d_{k}H$ is vertical. Since $\flat_{n,k}=n-1$ for $k<n$, we obtain 
    \(
    d_k(H,n)=(h_{\widehat{k}},n-1)=h|_{\widehat{k}},
    \)
    also as desired.

    Suppose now that $h$ has an upper face. Then it is necessarily $h|_{\widehat{0}}\in{N}_{n-1}$; $h|_{0}\in{M}_0$ is the sole low vertex, and all other faces $h|_{\widehat{k}}=(h_{\widehat{k}},1)\in\Phat_{n-2}$ are vertical with exit index necessarily minimal. 
    Now, $h$ is given by a horn $\widetilde{h}\colon\Lambda^{n}_{i}\to{N}$ with $\widetilde{h}|_{0}\in\iota({L}_0)$. Taking a filler $H$ of $\widetilde{h}$, we claim that $(H,1)$ fills $h$. The restriction of $\mcal{C}_{1}\colon[1]\times[n-1]\to[n]$ to $\{0\}\times[n-1]$ hits only $0$, so $H\circ\mcal{C}_1$ factors through the mapping cocylinder by construction, independently of the choice of filler $H$. Further, $\mcal{S}_1\colon[n]\to[1]\times[n-1]$ sends only $0$ to $\{0\}\times[n-1]$ while $\mcal{S}_1\circ\partial_0$ factors through $\{1\}\times[n-1]$. This means $d_{0}H$ is upper, so \(d_{0}(H,1)=h|_{\widehat{0}},\) as desired. Finally, \(d_{k}(H,1)=(d_{k}H,\flat_{1,k})=(h_{\widehat{k}},1)=h|_{\widehat{k}}\) for every $k\geq 1$, also as desired.

    Suppose, finally, that all faces of $h$ are vertical. Then $h|_{0}\in{M}_0$ is low and $h|_{n}\in{N}_0$ upper, and moreover there must exist an index \(1\leq e\leq n\) such that 
        $h|_{j}\in{M}_0$ for $j<e$ and $h|_{j}\in{N}_0$ for $j\geq e$
    as otherwise there would exist a pair $0<j<j'<n$ such that $h|_{j}\in{N}_{0}$ while $h|_{j'}\in{M}_0$, which is absurd since the edge $h|_{jj'}$ would be of type ${N}\to{M}$. Moreover, $e=1$ resp.\ $e=n$ are impossible, since then $h|_{\widehat{0}}$ resp.\ $h|_{\widehat{n}}$ would be low resp.\ upper. We have obtained
    \( 1<e<n.\)
    Now, we claim that the exit indices of the faces $h|_{\widehat{j}}\in\Phat_{n-2}$, $j\neq i$, are determined by $e$ via the formula
    \begin{equation}\label{AFNUJCI}
       h|_{\widehat{j}}=\begin{cases}
        (h_{\widehat{j}},e), & j\geq e,\\
        (h_{\widehat{j}},e-1), & j<e.
       \end{cases} 
    \end{equation}
    Indeed, that \(\mcal{C}^{n-1}_{\ell}(\{0\}\times[n-2])=\{0,\dots,\ell-1\}\) for any $1\leq \ell\leq n-1$ implies that if $j\geq e$, then $h_{\widehat{j}}\circ\mcal{C}^{n-1}_{e}$ factors through $\mscc{P}$, as does $h|_{\widehat{j}}\circ \mcal{C}^{n-1}_{e-1}$ if $j<e$. Conversely, suppose $h|_{\widehat{j}}$ has index $e'$: $(h|_{\widehat{j}})|_{0,\dots,e-1}$ must be low, which implies, by the definition of $\mcal{S}_{e'}$, that $e'\geq e$, and since there are no further low vertices, we have $e'\leq e$ and thus $e=e'$.

    Now, as $h$ is underlied by a horn $\widetilde{h}\colon \Lambda^{n}_i\to{N}$, we may choose a filler $H\in{N}_n$. We claim that $(H,e)$ fills $h$. In order to ensure that $H\circ \mcal{C}_{e}\colon\Delta[1]\times\Delta[n-1]\to{N}$ factors through $\mscc{P}$, it suffices to observe that the missing face $h|_{\widehat{i}}$ cannot be low for then the choice of filler $H$ does not affect the factorisation property in that $\widetilde{h}$ needs filling only away from $\iota({L})$. Indeed, the only such case would be when $i=n$, but $h$ is inner.
    Finally, we check that the exit indices of the faces of $H$ are correct: since $1<e<n$, no face of $H$ is low or upper, and \eqref{eq:formula for flat} implies $d_{j}(H,e)=h|_{\widehat{j}}$ due to \eqref{AFNUJCI}, as desired.
\end{proof}

\begin{definition}\label{4GGBWT5}
    We call a span ${M}\xleftarrow{\pi}{L}\xrightarrow{\iota}{N}$ of $\infty$-groupoids or $\infty$-categories, with $\pi$ a right fibration, and $\iota$ a cofibration, a \emph{linked $\infty$-groupoid} or \emph{linked space} resp.\ \emph{linked $\infty$-category}, of \emph{depth $1$}. We call $\EEx$ its \emph{exit path $\infty$-category}.
\end{definition}

\begin{remark}\label{SVLV6J2}
    Every EPC is naturally equipped with a conservative functor to $\mrm{N}([1])$.
\end{remark}

\begin{remark}
    For $\mfrak{S}=(M\leftarrow L\to N)$ a linked $\infty$-category, the core $\EEx(\mfrak{S})^{\simeq}$ of its exit path $\infty$-category is canonically isomorphic to $M^{\simeq}\amalg N^{\simeq}$.
\end{remark}

\begin{remark}[arbitrary depth-$1$ posets]\label{MPD0BDL}
    \Cref{4GGBWT5} admits a straightforward generalisation to linked $\infty$-categories indexed over a poset $P$ of depth $1$. More precisely, let $\mfrak{S}\to P$ denote a collection consisting of an $\infty$-category ${M}_p$ for each $p\in P$, an $\infty$-category ${L}_{p\prec q}$ for every non-trivial arrow $p\prec q$, and right fibrations ${L}_{p\prec q}\to {M}_p$ and cofibrations ${L}_{p\prec q}\to{M}_q$. Then, \Cref{dfn:exit of span} and \Cref{thm:exit path category of linked space} generalise in the evident way to yield an $\infty$-category $\EEx(\mfrak{S}\to P)$. In the rest of this paper we will mostly consider spans and leave this level of generality implicit. See \Cref{GPK27YP} for a systematic treatment. In the language of \cite{barwick2020exodromy}, $\mfrak{S}\to P$ is a quasi-categorical d\'ecollage over $P$.
\end{remark}

\begin{remark}[arbitrary $\iota$]\label{3F1JHL6}
    In a homotopically-stratified set \`a la Quinn \cite{quinn1988homotopically}, links are defined as path spaces, in which case the link map $\iota$, the target evaluation, is typically not a cofibration. Generally, one may simply wish to adjoin a path space as the space of non-invertible paths between two otherwise unrelated spaces or categories. We will now discuss a variation on \Cref{dfn:exit of span} that can handle such input. 
    
    Let $\mfrak{S}=({M}\xleftarrow{\pi}{L}\xrightarrow{\iota}{N})$ be a span of $\infty$-categories where $\pi$ is a right fibration, and suppose $k\geq1$ and $j\in\{1,\dots,k\}$. Given $(\gamma,j)\in\Phat_{k-1}$, we have that $\mbf{b}(\gamma,j)\coloneqq\gamma|_{0,\dots,j-1}$, its \emph{maximal low face} or \emph{base}, lifts, albeit non-uniquely, to ${L}$. Let us now set 
    \[
        \widehat{\Phat_{k-1}}\coloneqq\coprod_{j=1}^{k}{L}_{j-1}\times_{{N}_{j-1}}\Phat_{k-1,j},
    \]
    where $\Phat_{k-1,j}\subset{N}_{k}$ denotes the set of exit $k$-simplices of index $j$, the map $\Phat_{k-1,j}\to{N}_{j-1}$ is given by $\mbf{b}$, and ${L}_{j-1}\to{N}_{j-1}$ by $\iota$. 

    There is a simplicial set $\EEx=\EEx(\mfrak{S})$ whose simplices are given by $\EEx_0={M}_0\amalg{N}_0$, and $\EEx_{k}={M}_k\amalg\widehat{\Phat_{k-1}}\amalg{N}_{k}$ for $k\geq1$. 
    We call the face of a simplex that is not wholly within either stratum \emph{low}, \emph{upper}, or \emph{vertical} by referring to its second coordinate. 
    Now, let $(\widehat{\mbf{b}},\gamma,j)\in\widehat{\Phat_{k-1}}$. If $d_i\gamma$ is low, then we set $d_i(\widehat{\mbf{b}},\gamma,j)=\pi(\widehat{\mbf{b}})$; if it is upper, then $d_i(\widehat{\mbf{b}},\gamma,j)=(d_i\gamma)$; and if it is vertical, then $d_i(\widehat{\mbf{b}},\gamma,j)=(\widehat{\mbf{b}}|_{\widehat{i}},d_i\gamma,\flat_{j,i})$, noting that $\mbf{b}(d_i\gamma,\flat_{j,i})=d_i\gamma|_{0,\dots,\flat-1}=\mbf{b}|_{\widehat{i}}$. Here, $\mbf{b}|_{\widehat{i}}$ is $\mbf{b}$ if $i> \flat-1$ and $d_i\mbf{b}$ if $i\leq\flat-1$, that is, the restriction of $\mbf{b}$ along $[\flat-1]\hookrightarrow[k-1]\overset{\partial_i}{\hookrightarrow}[k]$; similarly for $\widehat{\mbf{b}}$. Finally, we set $s_i(\widehat{\mbf{b}},\gamma,i)=(\widehat{\mbf{b}}_{i^+},s_i\gamma,\sharp_{j,i})$, where $\widehat{\mbf{b}}_{i^+}$ is the pullback of $\widehat{\mbf{b}}$ along $[\sharp_{j,i}]\hookrightarrow[k+1]\overset{\sigma_i}{\twoheadrightarrow}[k]$. 
    
    The proofs of \Cref{Q9W21WG} and \Cref{thm:exit path category of linked space} apply mutatis mutandis to prove that $\EEx$ is an $\infty$-category. 
    The further generalisation in the situation of \Cref{MPD0BDL} is immediate.
    Finally, if $\iota$ is a cofibration, the resulting $\infty$-category is canonically isomorphic to the $\EEx$ of \Cref{dfn:exit of span}.
\end{remark}

\begin{remark}\label{0K44G88}
    It is possible to define exit paths without referring to shuffles. Namely, saying $(\gamma,j)\in\Phat_{k-1}$ is equivalent to asking that $\gamma\in{N}_k$ and that $\mbf{b}(\gamma,j)\coloneqq\gamma|_{0,\dots,j-1}$ lifts to ${L}$. The face $d_i\gamma$ being low is equivalent to $d_i\gamma\subset\mbf{b}(\gamma,j)$, and it being upper to $d_i\gamma\subset\gamma|_{j,\dots,k}$. Now we can \emph{define} $\flat$ and $\sharp$ by the formulas \eqref{eq:formula for flat} and \eqref{eq:formula for sharp}. Using \Cref{dfn:exit of span} verbatim yields an equivalent definition of $\EEx$. This version, less motivated and more combinatorial, is easier to formulate, but would make some proofs below more challenging. Shuffles will prove especially useful in later sections.
\end{remark}

\subsection{Homotopy categories}\label{V3YDWXA}

Recall that for $S$ a simplicial set, and $h\in S_2$, $f\in S_1$, $g\in S_1$ such that $f,g\colon x\to y$ for vertices $x,y\in S_0$, the $2$-simplex $h$ is called a homotopy from $f$ to $g$ if $d_0h=\id_y$, $d_1h=g$ and $d_2h=f$. 
If $S=\mscc{C}$ is an $\infty$-category, the existence of a homotopy from $f$ to $g$ is equivalent to the existence of a $2$-simplex $h$ such that $d_0h=f$, $d_1h=g$ and $d_2h=\id_x$:
\ncdon \label{LYGOAV0}
    & x\ar[dr,"{f}"] & \\
    x \ar[ur,"{\id}"]\ar[rr,"{g}"] && y
\ncdoff
This latter version affords us, by \Cref{thm:exit path category of linked space}, the following useful fact:
\begin{lemma}\label{9SE20VL}
    Let $\mscc{C}=\EEx({M}\xleftarrow{\pi}{L}\xrightarrow{\iota}{N})$ be the exit path $\infty$-category of a linked $\infty$-category and let $x\in{M}$, $y\in{N}$. Then an index-$2$ exit $2$-path $h=(\widetilde{h},2)\in\EEx_2$  satisfies $d^{{N}}_2\widetilde{h}\in\iota({L}_{x})$ witnesses a homotopy between $d_0h$ and $d_1h$. Conversely, every homotopy can be witnessed by such an exit $2$-path.
\end{lemma}

\Cref{EY7G1PB} below shows that the space $\Hom_{\EEx}(p,q)$ of paths from $p\in{M}$ to $q\in{N}$ is equivalent to the space $\mscc{P}_{{L}_{p},q}$ of paths in ${N}$ that start in the embedded fibre $\iota({L}_{p})$ and end in $q\in{N}$.
Thus \Cref{9SE20VL} and \Cref{EY7G1PB} combine to imply a fibrewise gauge invariance (where we suppress $\iota$ and write $\circ$ to denote any composition):

\begin{lemma}\label{WLEZ9P7}
    Let $\mscc{C}=\EEx({M}\xleftarrow{\pi}{L}\xrightarrow{\iota}{N})$ be the exit path $\infty$-category of a linked $\infty$-category and let $x\in{M}$, $y\in{N}$. Then if $f=(\widetilde{f},1)\colon x\to y$ is an exit path with $\widetilde{f}(0)=l\in {L}_x$ and $\gamma\colon l'\to l$ is a path in ${L}_x$, then  $f$ is homotopic to the exit path $g=(f\circ\gamma,1)\colon x\to y$.
\end{lemma}
\begin{proof}
    Take $h=(\widetilde{h},2)$ as in \eqref{LYGOAV0} where $\widetilde{h}$ is a filler for $d_2^{{N}}\widetilde{h}=\gamma$ and $d_0^{N}\widetilde{h}=\widetilde{f}$.
\end{proof}

\begin{corollary}\label{AOSITE5}
    Let $\mfrak{S}=(M\xleftarrow{\pi} L\to N)$ be a linked space, let $p\in M$, $q\in N$ be points, and let $\Pi_1=\Pi_1(\mfrak{S};\{p,q\})$ be the homotopy category of the full sub-$\infty$-category of $\EEx(\mfrak{S})$ generated by $p$ and $q$. If $M$ and $N$ are path-connected, then the isomorphism class of the category $\Pi_1$ is independent of the choice of $p$ and $q$.
\end{corollary}

\begin{proof}
    We will write square brackets to indicate homotopy classes and continue suppressing $\iota$. Let $\Gamma\colon p\to p'$ be a path in $M$ and let $\Delta\colon q\to q'$ be a path in $N$. These induce the classical bijections $[\mrm{ad}_{\Gamma}]\colon\mrm{End}(p)\to\mrm{End}(p')$ and $[\mrm{ad}_{\Delta}]\colon\mrm{End}(q)\to\mrm{End}(q')$ between the respective endomorphism sets in the categories $\Pi_1(\mfrak{S};\{p,q\})$ and $\Pi_1(\mfrak{S};\{p',q'\})$. Without loss of generality, we may assume $q=q'$ and suppress $\Delta$. In order to construct a map $\Phi\colon\Hom(p,q)\to \Hom(p',q)$, consider, as in the classical proof that the fibres of $\pi$ are homotopy equivalent, the map $L_p\times I\to M$, $(l,t)\mapsto \Gamma(t)$, and fix a lift $H\colon L_p\times I\to L$ along the fibre inclusion $L_p\times\{0\}\to L$. (Recall that $H(-,1)\colon L_p\to L_{p'}$ is a homotopy equivalence.) Now, let $\alpha\in \Hom(p,q)$ and let a representative $(\widetilde{\alpha},1)\in \EEx_1$ be given. Set 
    \[
        \Phi(\alpha)=[(H(\widetilde{\alpha}(0),-)^{-1}\ast\widetilde{\alpha},1)].
    \]
    By \Cref{WLEZ9P7} on $\Hom(p,q)$, all representatives of $\alpha$, up to homotopy in $N$, are of type $(\epsilon\ast\widetilde{\alpha},1)$ where $\epsilon\colon l\to \widetilde{\alpha}(0)$ is a path in $L_p$. In order to show well-definedness, it suffices, by \Cref{WLEZ9P7} on $\Hom(p',q)$, to find a homotopy $H(l,-)^{-1}\ast\epsilon\sim\epsilon'\ast H(\widetilde{\alpha}(0),-)^{-1}$ in $N$ for some path $\epsilon'\colon H(l,1)\to H(\widetilde{\alpha}(0),1)$ in $L_{p'}$, but this is provided by taking $\epsilon'=H(\epsilon(-),1)$ since $H(\epsilon(-),-)$ provides such a homotopy. Thus, 
    \(
        [(H(l,-)^{-1}\ast\epsilon\ast\widetilde{\alpha},1)]=[(\epsilon'\ast H(\widetilde{\alpha}(0),-)\ast\widetilde{\alpha},1)]=[(H(\widetilde{\alpha}(0),-)^{-1}\ast\widetilde{\alpha},1)]
    \)
    and so $\Phi$ is well-defined. 

    It is clear that the resulting map $\Phi\colon\Pi_1(\mfrak{S};\{p,q\})\to\Pi_1(\mfrak{S};\{p',q\})$ is functorial with respect to post-composition with loops at $q$. Let now $\theta\colon p\to p$ be a loop at $p$ and let $\Theta\colon l\to \widetilde{\alpha}(0)$ in $L$ be a lift so that the composition $\alpha\circ[\theta]$ is represented by the exit path $(\Theta\ast\widetilde{\alpha},1)$ and we have $\Phi(\alpha\circ[\theta])=[(H(l,-)^{-1}\ast\Theta\ast\widetilde{\alpha},1)]$. On the other hand, $\Phi(\alpha)\circ\Phi([\theta])=[(H(\widetilde{\alpha}(0),-)^{-1}\ast\widetilde{\alpha},1)]\circ[\mrm{ad}_{\Gamma}(\theta)]$, in order to represent which let $\mrm{AD}_{\Gamma}(\theta)\colon l'\to H(\widetilde{\alpha}(0),1)$ in $L$ be a lift of $\mrm{ad}_{\Gamma}(\theta)$. Then $\Phi(\alpha)\circ\Phi([\theta])=[(\mrm{AD}_\Gamma(\theta)\ast H(\widetilde{\alpha},-)^{-1}\ast\widetilde{\alpha},1)]$. But we may take 
    \(
        \mrm{AD}_\Gamma(\theta)=H(l,-)^{-1}\ast\Theta\ast H(\widetilde{\alpha}(0),-)
    \)
    which gives $[(H(l,-)^{-1})\ast\Theta\ast H(\widetilde{\alpha}(0),-)\ast H(\widetilde{\alpha}(0),-)^{-1}\ast\widetilde{\alpha},1)]=\Phi(\alpha\circ[\theta])$, proving the functoriality of $\Phi\colon\Pi_1(\mfrak{S};\{p,q\})\to\Pi_1(\mfrak{S};\{p',q\})$.

    In order to show that $\Phi$ is an isomorphism, let $K\colon L_{p'}\times I\to L$ be a lift of $L_{p'}\times I\to M$, $(l,t)\mapsto\Gamma(1-t)$ along the fibre inclusion $L_{p'}\times \{0\}\to L$ and let $\Psi\colon\Pi_1(\mfrak{S};\{p',q\})\to \Pi_1(\mfrak{S};\{p,q\})$ be defined by $\Psi|_{\mrm{End}(p')}=[\mrm{ad}_{\Gamma^{-1}}]$ and $\Psi(\beta)=[(K(\widetilde{\beta}(0),-)^{-1}\ast\widetilde{\beta},1)]$ where $\beta=[(\widetilde{\beta},1)]\in\Hom(p',q)$. We observe $\Psi\Phi(\alpha)=[(K(H(\widetilde{\alpha}(0),1),-)^{-1}\ast H(\widetilde{\alpha}(0),-)^{-1}\ast\widetilde{\alpha},1)]$, but the first concatenation is over $\Gamma\ast\Gamma^{-1}$, so let $I\times I\to M$ be a homotopy $\Gamma\ast\Gamma^{-1}\to\mrm{const}_p$ and $\Xi\colon I\times I\to L$ be a homotopy extension of $K(H(\widetilde{\alpha}(0),1),-)^{-1}\ast H(\widetilde{\alpha}(0),-)^{-1}\colon I\times\{0\}\to L$. Then the latter is homotopic to $\Xi(-,1)$, a path contained within $L_p$, hence $\Psi\Phi(\alpha)=[(\Xi(-,1)\ast \widetilde{\alpha},1)]=[(\widetilde{\alpha},1)]=\alpha$ using \Cref{WLEZ9P7}. The converse $\Phi\Psi=\id$ is analogous.
\end{proof}

The bijections $\Hom(p,q)\to\Hom(p',q')$ in the context of \Cref{AOSITE5} need not be given by conjugation, as \Cref{BAI62S9} illustrates. We note the following simple description:

\begin{lemma}\label{5QWZ624}
    Let $\Pi_1$ be as in \Cref{AOSITE5}, and suppose $q\in L_p$ and that $L_p$ is connected. Then $\Pi_1$ is a category with objects $p$ and $q$, satisfying  $\mrm{End}(p)\cong\pi_1(M)$, $\mrm{End}(q)\cong\pi_1(N)$, and 
    \(
        \Hom(p,q)\cong\pi_1(N)/\pi_1(L_p).
    \)
\end{lemma}
\begin{proof}
    This follows from \Cref{EY7G1PB} and the homotopy long exact sequence of the path space fibration $\evv_1\colon P_{L_p,-}\to N$. 
\end{proof}

\subsection{Examples}\label{8PVG15K}

    Any right fibration $\pi$ alone, or any cofibration $\iota$ alone gives an example with a trivial choice for the other leg of the span: the identity cofibration 
    \(
        {M}\xleftarrow{\pi}{L}\xrightarrow{\id}{L}
    \)
    or the final fibration to the point
    \(
        \ast\leftarrow{L}\xrightarrow{\iota}{N}.
    \)
    In the latter case, there is one more trivial option: 
    \(
        \partial_{\iota}=({L}\xleftarrow{\id}{L}\xrightarrow{\iota}{N}).
    \)
    In view of \Cref{ex:bordisms} below, we interpret $\partial_\iota$ as a \emph{linked bordism} with boundary ${L}$ and interior ${N}$. Not all linked bordisms arise from ordinary bordisms, e.g.\ if $\dim L<\dim M -1$. In view of \Cref{3F1JHL6}, $\iota$ need not even be a cofibration. In a similar vein, any $\infty$-category ${X}$ gives a linked $\infty$-category 
    \(
        \emptyset\leftarrow\emptyset\to{X}
    \)
    with $\EEx(\emptyset\leftarrow\emptyset\to{X})\cong{X}$. The other trivial construction 
    \(
        \ast\leftarrow{X}\xrightarrow{\id}{X}
    \)
    corresponds to taking the open cone of ${X}$ in the sense that 
    \(
        \ast\in\EEx(\ast\leftarrow{X}\xrightarrow{\id}{X})\simeq {X}^{\triangleleft}
    \)
    is initial. We note this in \Cref{3IZYV5S}.

\begin{example}\label{ex:bordisms} The linked space corresponding to a bordism $(M,\partial M)$ has lower stratum $\partial M$, higher stratum $M^\circ=M\smallsetminus\partial M$, link $L=\partial M$, $\pi=\id_{\partial M}$, and $\iota\colon\partial M\hookrightarrow M^\circ$ given by the flow along a nowhere-vanishing inward-pointing vector field along the boundary for a chosen non-zero time.
\end{example} 

\begin{example}\label{ex:defects} With a closed submanifold $\Sigma\subset M$ we may associate a linked space with lower stratum $\Sigma$, higher stratum $M\smallsetminus \Sigma$, and link $\mbb{S}\Normal (\Sigma)$, the sphere bundle of the normal bundle of $\Sigma$, together with the projection $\pi$ and the choice $\iota$ of a tubular neighbourhood.
\end{example}

\begin{example}\label{ex:grassmann}
    For $n,m\in\mbb{N}$, consider the linked space 
    \[
        B\OO(n,m)\coloneqq(B\OO(n)\twoheadleftarrow B\OO(n)\times B\OO(m)\xhookrightarrow{\oplus}B\OO(n+m)),
    \] 
    the (infinite) \emph{$(n,m)$-Grassmannian}, where $\oplus$ is induced by taking direct sums of vector spaces and the choice of an isomorphism $\R^\infty\oplus\R^\infty\cong\R^\infty$. Similar linked spaces exist for classical structure groups. It is shown in the companion paper \cite{tetik2022stratified} that $\EEx(B\OO(n,m))$ embeds as a full sub-$\infty$-category into a strictified quasi-category model of the stratified Grassmannian of Ayala--Francis--Rozenblyum \cite{ayala2020factorization}. Analogously, the Stiefel manifolds of the same ranks assemble into a linked space $E\OO(n,m)$ over $B\OO(n,m)$, and $\EEx(E\OO(n,m))$ embeds into a similar stratified Stiefel $\infty$-category. Analogous statements hold for classical structure groups.
\end{example}

Using \Cref{5QWZ624} we will calculate the homotopy categories of some linked spaces.

\begin{example}\label{7RNKPF5}
    Consider $\mbb{C}_{0}=(\{0\}\leftarrow S^1\to\mbb{C}^\ast)$, the linked space corresponding to the complex plane stratified by the origin. Let $\Pi_{1}(\mbb{C}_{0};\{0,i\})$ denote the full sub-category of the homotopy category $\mrm{h}\EEx(\mbb{C}_{0})$ generated by the objects $0$ and $i$. Then $\mscc{P}_{S^1,i}$ is contractible, hence $\Hom_{\Pi_{1}(\mfrak{S};\{0,i\})}(0,i)=\ast$. Consequently, we obtain an isomorphism $\Pi_{1}(\mbb{C}_{0};\{0,i\})\cong (\mrm{B}\mbb{Z})^{\triangleleft}$.

    Similarly, let $(\mbb{C}\smallsetminus\{1\})_{0}=(\{0\leftarrow S^1\to\mbb{C}\smallsetminus\{0,1\}\})$, let $\mbb{Z}\ast\mbb{Z}$ denote the free product of $\mbb{Z}$ with itself, and finally let $e\colon\mbb{Z}\hookrightarrow\mbb{Z}\ast\mbb{Z}$ denote one of the two subgroup inclusions. In the homotopy long exact sequence of the path space fibration $\evv_1\colon\mscc{P}_{S^1,-}\to\mbb{C}\smallsetminus\{0,1\}$, the map $\mbb{Z}\cong\pi_1(\mscc{P}_{S^1,-})\to\pi_1(\mbb{C}\smallsetminus\{0,1\})\cong\mbb{Z}\ast\mbb{Z}$ is given by $\pi_1(\evv_1)=e$, which implies $\pi_0(\mscc{P}_{S^1,i})\cong(\mbb{Z}\ast\mbb{Z})/e$. This is a mere set since $\mrm{Im}(e)$ is not normal. We conclude that the two-object category $\Pi_{1}((\mbb{C}\smallsetminus\{1\})_{0};\{0,i\})$ satisfies $\mrm{End}(0)=\ast$, $\mrm{End}(i)=\mbb{Z}\ast\mbb{Z}$, $\Hom(0,i)=(\mbb{Z}\ast\mbb{Z})/\mbb{Z}$, and the composition $\mrm{End}(i)\times\Hom(0,i)\to\Hom(0,i)$ is given by $(w,w'\mbb{Z})\mapsto ww'\mbb{Z}$.
\end{example}

\begin{example}\label{BAI62S9}
    Let $K\colon S^1\hookrightarrow S^3$ be a knot with knot group $W=\pi_1(S^3\smallsetminus K)$. Since the normal bundle $\nu$ of any knot is trivialisable, the linked space which \Cref{ex:defects} associates with $K$ can be taken to be $S^3_{K}=(S^1\leftarrow S^1\times S^1\to S^3\smallsetminus K)$ where the link embedding $\iota\colon S^1\times S^1\to S^3\smallsetminus K$ is essentially determined by $K$ and the normal framing. Let now $p\in S^1$ and $q\in S^3\smallsetminus K$, and note $L_p=S^1$. Analogously to \Cref{7RNKPF5}, the target evaluation $\mscc{P}_{S^1,-}\to S^3\smallsetminus K$ composed with the constant loop inclusion $S^1\simeq\mscc{P}_{S^1,-}$ induces a group homomorphism $e\colon \mbb{Z}\to W$ on fundamental groups and we obtain $\Hom_{\Pi_1(S^3_{K};\{p,q\})}(p,q)=\pi_0(\mscc{P}_{S^1,q})\cong W/e$. The rest of the two-object category $\Pi_1(S^3_K;\{p,q\})$ is then given similarly to the one in \Cref{7RNKPF5}. 
    
    Recalling the Wirtinger presentation of $W$, we observe that the map $S^1\to S^3\smallsetminus K$ inducing $e$ specifies a generator $\epsilon$ of $W$, so that $e$ maps onto the subgroup of $W$ generated by $\epsilon$. For instance, for $K=U$ the unknot, we have $W\cong \mbb{Z}$ and so $W/e\cong\ast$ for either normal framing (corresponding to the choice $\epsilon=\pm1$).

    If $K=T$ is the trefoil knot, then $W\cong B_3$ is the braid group on $3$ strands, one presentation of which is $\langle a,b : a^2=b^3\rangle$. Let $a_1, a_2, a_3\in W$ denote the three generators of $W$ so that $\epsilon=a_i^{\pm 1}$ for some $i\in\{1,2,3\}$. Let us assume $\epsilon=a_i$ upon changing the normal framing (or the plane projection) if necessary. An isomorphism $W\to B_3$ is given by $a_1\mapsto ab^{-2}aba^{-1}$, $a_2\mapsto ab^{-1}$, $a_3\mapsto b^2a^{-1}$.  Given this identification, \Cref{AOSITE5} implies that the quotients $B_3/a_{i}$ for all $i\in\{1,2,3\}$ are in bijection with one another. There are two cases depending on the choice of the basepoints $p$ and $q$: 
    \begin{itemize}
        \item For $\epsilon\in\{a_2,a_3\}$, $b^2a^{-1}$ and $ab^{-1}$ are conjugate, giving a bijection $B_3/b^2a^{-1}\cong B_3/ab^{-1}$.
        \item If $\epsilon=a_1=ab^{-2}aba^{-1}$, then $W/\epsilon\cong B_{3}/b^{-1}a$ upon conjugation. Although $ab^{-1}$ is not conjugate to $b^{-1}a$, a bijection $B_3/b^{-1}a\cong B_3/ab^{-1}$ can be given as follows: there is an isomorphism $R\colon F_2=\langle a,b\rangle\to F_2$ on the free group on two letters given by reversing the order of letters in words. Then $R$ induces a bijection $B_3/ab^{-1}\cong b^{-1}a\backslash B_3\cong B_3/b^{-1}a$.
    \end{itemize}
\end{example}

\begin{example}\label{02FAWCN}
    It is well known that $c=a^2=b^3$ in $B_3\cong\langle a,b : a^2=b^3\rangle$ generates the infinite cyclic centre of $B_3$ and that $B_3$ is the universal central extension of the modular group: $B_3/c\cong \mrm{PSL}(2;\mbb{Z})$. Now, diverging from the context of \Cref{ex:defects} slightly, consider a linked space of type $\mfrak{S}_{K,\iota}=(\ast\leftarrow S^1\xhookrightarrow{\iota}S^3\smallsetminus K)$ where $K$ is a knot. Taking $K=T$ to be the trefoil knot, $p=\ast$, $q\in S^3\smallsetminus\{K\cup\iota\}$  and $\iota\sim(a_3a_2)^{\ast 3}$ a smooth embedding representative of $c$ written in terms of the Wirtinger generators, we obtain $\Hom_{\Pi_1}(p,q)\cong \mrm{PSL}(2;\mbb{Z})$. Thus, the two-object category $\Pi_1$ is determined by the modular group together with the action of $B_3$ thereon. 
    Generalising slightly, any group quotient $G/H$, where $H$ is a (not necessarily normal) finitely generated  subgroup can be realised as a morphism set $\Hom_{\Pi_1}(\ast,q)$ by considering  $\ast\leftarrow (S^1)^{\vee k}\xhookrightarrow{\iota} K(G,1)$ for $\iota$ representing the generators of $H$.
\end{example}

\section{Morphism spaces and constant exit loops}\label{TCFKEBQ}

\begin{notation*}
    Given an embedding $\iota\colon \Sigma\hookrightarrow N$ and a point $q\in N$, we let 
    $
    P(N)_{\Sigma,q}=P_{\Sigma,q}
    $
    denote the space of paths in $N$ that start in $
    \iota(\Sigma)$ and end in the point $q$, equipped with the compact-open topology. We use analogous notation when we work with a cofibration $\iota$ of simplicial sets.
\end{notation*}

The following result formalises and confirms the intuition that the link represents an infinitesimal expansion of the lower stratum into the higher stratum, i.e., the boundary of its unzip (see \Cref{YSAL1EK}). More precisely, it is a pointwise version of that fact.

\begin{lemma}\label{EY7G1PB}
    Let $\mathfrak{S}=({M}\xleftarrow{\pi}{L}\xrightarrow{\iota}{N})$ be a linked $\infty$-category, and $p\in {M}$ and $q\in {N}$ points in the two strata. There is an equivalence
    \(
        \Hom_{\EEx(\mathfrak{S})}(p,q)\simeq \mscc{P}_{{L}_p,q}
    \)
    between the morphism space in $\EEx$ from $p$ to $q$ and the space of paths in ${N}$ that start in the embedded fibre $\iota({L}_p)$, where ${L}_p=\{p\}\times_{{M}}{L}$, and end in $q$.
\end{lemma}
\begin{proof}
    We will work with a model for morphism spaces that makes the proof particularly simple: by \cite[01L5]{kerodon}, the morphism space in $\EEx$ is equivalent to the right-pinched morphism space 
    \(
        \Hom^{\mathrm{R}}_{\EEx}(p,q)\coloneqq \{p\}\times_{\EEx}(\EEx/q).
    \)
    We will observe that $\{p\}\times_{\EEx}(\EEx/q)$ is isomorphic to $\mscc{P}_{{L}_p,q}={L}_p\times_{{N}}({N}/q)$.
    Indeed, at vertex level, the bijection
    \(
        \left(\{p\}\times_{\EEx}(\EEx/q)\right)_0\cong\left({L}_p\times_{{N}}({N}/q)\right)_0
    \)
    is clear: recalling that non-invertible $1$-morphisms in $\EEx$ are elements of $\Phat_0\subset{N}_1$, let $(\gamma,1)\in\Phat_0$. For $p=d_1^{\EEx}(\gamma,1)=\pi(d_1^{{N}}(\gamma))$ to hold, we must have $d_1^{{N}}(\gamma)\in\iota({L}_p)$. Similarly, $d_0^{\EEx}(\gamma,1)=\iota(d_0^{{N}}(\gamma))$, which yields the bijection.

    Let now $k>0$ and consider an exit $(k+1)$-path $(\gamma\colon\Delta[k]\star\Delta[0]\to{N},j)$ in $(\EEx/q)_k\subset\EEx_{k+1}$. 
    Asking that $(\gamma,j)$ be in $\{p\}\times_{\EEx}(\EEx/q)$ is equivalent to asking that (1) its ${N}$-face 
        \(
            \Delta[k]\hookrightarrow\Delta[k]\star\Delta[0]\xrightarrow{\gamma} {N},
        \)
        which is $d_{k+1}^{{N}}(\gamma)$ under the standard identification $\Delta[k]\star\Delta[0]\cong\Delta[k+1]$, is low, as by construction only then can the corresponding $\EEx$-face be in ${M}_k\subset\EEx_{k}$; and that (2) it lies in particular in $\iota({L}_p)$.

    Condition (1) implies that $d^{{N}}_{\ell}(\gamma)\in{N}_k$ is vertical for $\ell<k+1$ since all other faces contain $q\colon\Delta[0]\hookrightarrow\Delta[k]\star\Delta[0]$, whence they are not low; if some $d_{\ell}(\gamma)$ was upper, that would contradict the lowness of its unique common $(k-1)$-face with $d_{k+1}^{{N}}(\gamma)$. In fact, $(\gamma,j)$ has no upper $n$-face once $n>0$: given $\Delta[n]\hookrightarrow\Delta[k+1]$, there is necessarily a vertex in $d^{{N}}_{d+1}(\gamma)$ in its image.
    But then the exit index $j$ must be maximal:
    \(
    j=k+1.
    \)
    For if not, then there would exist at least one upper $n$-face for $n>0$, the largest such, with $n=k+1-j$, for instance, being specified by $[n]\hookrightarrow [k+1]$, $\alpha\mapsto \ell+\alpha$. We thus obtain a bijection
    \(
        \left(\{p\}\times_{\EEx}(\EEx/q)\right)_k\cong\left({L}_p\times_{{N}}({N}/q)\right)_k
    \)
    by reducing exit paths $(\gamma,j)$ on the left-hand side to those of index $k+1$, and so to only a subset of ${N}_{k+1}$, and specifically those such that $d_{k+1}^{{N}}(\gamma)\in{L}_p$. Finally, $\left(\{p\}\times_{\EEx}(\EEx/q)\right)_{\ast}\xrightarrow{}\left({L}_p\times_{{N}}({N}/q)\right)_\ast$, thus defined, is functorial: any vertical face of $(\gamma,k+1)$ is again of maximal index since, using \eqref{eq:formula for flat} and \eqref{eq:formula for sharp}, we have $\flat^{k+1}_{k+1,i}=k$ and, for degeneracies, we have $\sharp^{k+1}_{k+1,i}=k+2$ for all $i<k+1$. 
\end{proof}

\begin{corollary}\label{3IZYV5S}
    For a linked space of type 
    \(
        \ast\leftarrow {N}\overset{\id}{\to}{N}
    \)
    we have
        \(
        \Hom_{\EEx}(\ast,q)\simeq\ast.
        \)
        Consequently, when ${N}=\Sing_\bullet(N)$ for $N$ a smooth manifold, we have 
        \(
            \EEx\simeq\mathrm{Exit}(C(N)),
        \)
        where the left-hand side is the exit path $\infty$-category 
        of the open cone 
        \(
            C(N)=\ast\amalg_{\{0\}\times N}([0,1)\times N)
        \)
        on $N$ with its canonical stratification over $\{0<1\}$.
\end{corollary}
\begin{proof}
    We have $\mscc{P}_{{L}_\ast,q}\simeq N/q\simeq\ast$ since $N$ is an $\infty$-groupoid, so
    \(
        \EEx\simeq N^{\triangleleft}.
    \) 
    The second statement follows from \cite[Proposition 3.3.8]{ayala2017stratified}.\footnote{This is an equivalence of quasi-categories for Lurie's model from \cite{luriehigheralgebra}, or, after translating to the complete Segal space model and using \cite[Lemma 3.3.9]{ayala2017stratified}, with that of Ayala et al.}
\end{proof}

\begin{remark} 
    Since the link of the cone locus $\ast$ and the interior of $C(N)$ is $N$ itself, one could consider $\ast\twoheadleftarrow N\hookrightarrow N\times\R$ the natural linked space model for the open cone. The proof of \Cref{3IZYV5S} implies that this modification changes the exit path $\infty$-category only up to equivalence. More generally, homotopy equivalent linked spaces have equivalent EPCs: see \Cref{S2KANF2} for the topological category of linked spaces and \Cref{5X56IK6} for $\EEx$ as a topological functor into the Kan-enriched category of $\infty$-categories.
\end{remark}

    Next, we will generalise \Cref{EY7G1PB} in for linked spaces and identify $L$, up to equivalence, with the space of paths in $\EEx$ from $M$ to $N$.
    Namely, for $p\in M$ and $q\in N$, observe that  
    \(
        \down[N]{L_{p}}{q}=P_{L_{p},q}\simeq \Hom(p,q)=\down[\EEx]{p}{q}.
    \)
    Formally, varying $p$ should give an equivalence 
    \(
        \down[N]{L}{q}=P_{L,q}\simeq\down[\EEx]{L}{q}\simeq\down[\EEx]{M}{q}
    \)
    and then varying $q$ should give
    \(
        L\simeq\down[N]{L}{N}=P_{L}\simeq\down[\EEx]{M}{N}.
    \)

    This is indeed true, but we will start with the `constant exit loop inclusion' which is well-defined more generally for all linked $\infty$-categories. 

    \begin{lemma}\label{RPISW40}
        There is an $\infty$-functor
        \(
        \Box\colon L\to\Ar(\EEx)
        \)
        which factors through $(M\downarrow N)\subset\Ar(\EEx)$.
    \end{lemma}
    \begin{proof}
        Let $n\geq 0$ and $\gamma\in L_{n}$. We define the restriction of 
        \(
            \boxed{\gamma}\colon\Delta[1]\times\Delta[n]\to\EEx
        \)
        along $\Delta[n]\simeq\Delta\{1\}\times\Delta[n]\hookrightarrow\Delta[1]\times\Delta[n]$, its \emph{upper side}, to be
        \begin{equation}\label{5WPV6OL}
            \boxed{\gamma}_{\Delta\{1\}\times\Delta[n]}\coloneqq\iota(\gamma)\in M_{n}\subset \EEx_{n},
        \end{equation}
        and, its low side, the restriction along $\Delta[n]\simeq\Delta\{0\}\times\Delta[n]\hookrightarrow\Delta[1]\times\Delta[n]$ to be 
        \begin{equation}\label{HDCZQA6}
            \boxed{\gamma}_{\Delta\{0\}\times\Delta[n]}\coloneqq\pi(\gamma)\in N_n\subset\EEx_{n}.
        \end{equation}
        Further, given $i\in[n]$, we define the restriction along $\Delta[1]\simeq\Delta[1]\times\Delta\{i\}\hookrightarrow\Delta[1]\times\Delta[n]$ to be 
        \[\boxed{\gamma}_{\Delta[1]\times\Delta\{i\}}\coloneqq(s_0\iota(\gamma|_{i}),1)=(s_0\iota((\{i\}\hookrightarrow[n])^*\gamma),1)\in\Phat_0\subset\EEx_1.\] It is easily seen that these definitions are consistent.
    
        We define the restriction along the exit shuffle $\mcal{S}_j$, $j\in\{1,\dots,n+1\}$, to be 
        \begin{equation}\label{K0R63TJ}
            \boxed{\gamma}_{\mcal{S}_{j}}\coloneqq\left(s_{j-1}\iota(\gamma),j\right).
        \end{equation}
        Let us check that this is consistent with the definitions of $\boxed{\gamma}_{\Delta\{0/1\}\times\Delta[n]}$ and $\boxed{\gamma}_{\Delta[1]\times\Delta\{i\}}$ we have given. 
        We must have that the low part\footnote{This means the sub-simplicial set generated by the low vertices of the exit path.} of $\boxed{\gamma}_{\mcal{S}_{j}}$ coincides with the appropriate face of $\boxed{\gamma}_{\Delta\{0\}\times\Delta[n]}$. The low part is the restriction along the identity inclusion $\Delta[j-1]\hookrightarrow\Delta[n+1]$. Since $(\Delta[j-1]\hookrightarrow\Delta[n+1])^*s_{j-1}=(\sigma_{j-1}\circ([j-1]\hookrightarrow[n+1]))^*$ and since $[j-1]\hookrightarrow[n+1]\xrightarrow{\sigma_{j-1}}[n]$ coincides with the identity inclusion $[j-1]\hookrightarrow[n]$, we have that 
        \(
            (\Delta[j-1]\hookrightarrow\Delta[n+1])^{*}(s_{j-1}\iota(\gamma),j)=\pi(\gamma|_{0,\dots,j-1}),
        \) 
        as desired.
        Similarly, the upper part\footnote{generated by the upper vertices} is the restriction along $\Delta[n+1-j]=\Delta\{j,\dots,n+1\}\hookrightarrow\Delta[n+1]$. Since $\{j,\dots,n+1\}\hookrightarrow[n+1]\xrightarrow{\sigma_{j-1}}[n]$ is $\Delta[n+1-j]=\Delta\{j-1,\dots,n\}\hookrightarrow\Delta[n]$, we have 
        \(
            (\Delta\{j,\dots,n+1\}\hookrightarrow\Delta[n+1])^*(s_{j-1}\iota(\gamma),j)=\iota(\gamma|_{j-1,\dots,n}).
        \) 
        This is as desired, since $\mcal{S}_{j}(k)=(1,i-1)$ for $k\geq j$, so that the upper part as picked out in the simplex category $\mathbf{\Delta}$ by the image of $[n+1-j]\xhookrightarrow{+j}[n+1]\xhookrightarrow{\mcal{S}_{j}}[1]\times[n]$, which is  $\{1\}\times\{j-1,\dots,n\}$, is precisely $\boxed{\gamma}_{\{1\}\times\{j-1,\dots,n\}}=\iota(\gamma|_{j-1,\dots,n})$.  
        
        Finally, we must show that the restrictions along the $\mcal{S}_{j}$, thus defined, glue.
        Let a pair of distinct exit indices $j<j'$ in $\{1,\dots,n+1\}$ be given. The intersection of the images of $\mcal{S}_{j},\mcal{S}_{j'}\colon[n+1]\hookrightarrow[1]\times[n]$ as picked out within $\mathbf{\Delta}$ consists of a purely low part, $\{0\}\times[j-1]$, and a purely upper part, $\{1\}\times\{j'-1,\dots,n+1\}$. Let us write $\delta\coloneqq j'-j$ and consider the map 
        \[\mcal{S}_{j\cap j'}\colon \Delta[n+1-\delta]\hookrightarrow\Delta[1]\times\Delta[n]\]
        induced by the map $\mcal{S}_{j\cap j'}\colon[n+1-\delta]\hookrightarrow[1]\times[n]$ given by
        \[
        i\mapsto\begin{cases}
            (0,i), & i<j\\
            (1,i-1+\delta), & i\geq j.
        \end{cases}
        \]
        In other words, $\mcal{S}_{j\cap j'}$ is like $\mcal{S}_{j}$ except with upper part shifted by $j'-j$, and so that (the images in $[1]\times[n]$ of) its low and upper parts coincide precisely with those of $\mcal{S}_{j}$ and $\mcal{S}_{j'}$, respectively. That is, $\mcal{S}_{j\cap j'}$ picks out precisely the intersection of the images of $\mcal{S}_{j}$ and $\mcal{S}_{j'}$ within $\Delta[1]\times\Delta[n]$. Moreover, its factorisation through $\mcal{S}_{j}$ as well as through $\mcal{S}_{j'}$ is given by identifying 
        \[
            \Delta[n+1-\delta]=\Delta\{0,\dots,j-1,j',\dots,n+1\},
        \]
        in the sense that
        \cdon[column sep=large] 
            \Delta\{0,\dots,j-1,j',\dots,n+1\}
            \ar[d,hook,"{\Theta}"']
            \ar[r,hook,"{\mcal{S}_{j\cap j'}}"] 
        & 
            \Delta[1]\times\Delta[n]
        \\
            \Delta[n+1]
            \ar[ur,hook,"{\text{$\mcal{S}_{j}$ or $\mcal{S}_{j'}$}}"']
        \cdoff
        commutes. This can be checked within $\mathbf{\Delta}$: since $j<j'$, the restrictions of $\mcal{S}_{j}$ and $\mcal{S}_{j'}$ along $[0,\dots,j-1]\hookrightarrow[n+1]$ are both $i\mapsto (0,i)$, which coincides with $\mcal{S}_{j\cap j'}$. For $[n+1-\delta]\ni i\geq j$, we have $\Theta(i)=i+\delta\geq j'>j$ and so $\mcal{S}_{j}\Theta(i)=(1,\Theta(i)-1)=\mcal{S}_{j'}\Theta(i)$. This coincides with $\mcal{S}_{j\cap j'}(i)=(1,i-1+\delta)$, which proves the commutativity of the diagram.
    
        We must show, therefore, that the two maps
        \[
        \Theta^*\boxed{\gamma}_{\mcal{S}_{j}}, \Theta^*\boxed{\gamma}_{\mcal{S}_{j'}}\colon \Delta[n+1-\delta]\to\EEx 
        \]
        coincide. This will reduce to a fact about the behaviour of $\flat(-,-)\coloneqq\flat_{-,-}$. We observe  
        \begin{align*}
        \Theta&=\partial_{j'-1}\partial_{j'-2}\cdots\partial_{j+1}\partial_{j}\\
        &\colon\Delta[n+1-\delta]\hookrightarrow\Delta[n+1-\delta+1]\hookrightarrow\cdots\hookrightarrow\Delta[n+1]
        \end{align*}
        and note that $\Theta^*=d_{j}d_{j+1}\cdots d_{j'-2}d_{j'-1}=d_{j}d_{j}\cdots d_{j}d_{j}$ by repeated application of the simplicial identitiy $d_{\alpha}d_{\beta}=d_{\beta-1}d_{\alpha}$ for $\alpha<\beta$. 
        This implies 
        \begin{align*}
        \Theta^*\boxed{\gamma}_{\mcal{S}_{j}}&=(d_{j}d_{j}\cdots d_{j}\id\iota\gamma,\flat(\flat(\cdots\flat(j,j'-1),j+1),j))\\
        &=(d_{j}d_{j+1}\cdots d_{j'-3}d_{j'-2}\iota\gamma,j)
        \end{align*}
        using the simplicial identitiy $d_{j}s_{j-1}=\id$ and then by repeated un-application of the previous identity. For the exit index, we used that $\flat(\alpha,\beta)=\alpha$ if $\beta\geq\alpha$, so that 
        \begin{align*}
            \flat(\flat(\cdots\flat(j,j'-1),j+1),j)&=\flat(\flat(\dots\flat(j,j'-2),j+1),j)=\dots\\
            &=\flat(j,j)=j.
        \end{align*}
        On the other hand, using $\flat(\alpha,\beta)=\alpha-1$ if $\beta<\alpha$, we have 
        \begin{align*}
        \flat(\flat(\cdots\flat(j',j'-1),j+1),j)&=\flat(\flat(\cdots\flat(j'-1,j'-2),j+1),j)=\dots\\
        &=\flat(j+1,j)=j.
        \end{align*}
        Now, since $d_{j'-1}s_{j'-1}=\id$, we obtain 
        \begin{align*}
            \Theta^*\boxed{\gamma}_{\mcal{S}_{j'}}&=(d_{j}\cdots d_{j'-1}s_{j'-1}\iota\gamma,\flat(\flat(\cdots\flat(j',j'-1),j+1),j))\\
            &=(d_{j}\cdots d_{j'-2}\iota\gamma,j)\\
            &=\Theta^*\boxed{\gamma}_{\mcal{S}_{j}},
        \end{align*}
        as desired. 
        This concludes the construction of $\Box$. The triple 
        \(
            (\pi,\Box,\iota)\colon L\to M\times \EEx^{\Delta[1]}\times N
        \)
        factors through $(M\downarrow N)$ by \eqref{5WPV6OL} and \eqref{HDCZQA6}.
\end{proof}
    
\begin{remark}\label{DHSVOGS}
    Even though $\Box$ is the `constant exit loop inclusion,' the simplices $\boxed{\gamma}_{\mcal{S}_j}=(s_{j-1}(\gamma),j)$ are \emph{not} degenerate. Indeed, $(s_{j-1}(\gamma),j)=s_{j-1}(\gamma,e)$ requires $j=\sharp_{e,j-1}$, so $e\in\{j-1,j\}$, but $\sharp_{j-1,j-1}=j-1$ and $\sharp_{j,j-1}=j+1$.
\end{remark}

\begin{proposition}\label{T47D64N}
    Let $M\leftarrow L\to N$ be a linked space. Then the box map $\Box\colon L\hookrightarrow \down{M}{N}$ of \Cref{RPISW40} witnesses an equivalence $L\simeq\down{M}{N}$.
\end{proposition}

We start with the observation that \Cref{4S91CIG} holds with $\infty$-groupoids just as it does with topological spaces. We include a proof for completeness.

\begin{lemma}\label{lem:mapping cocylinder}
    Let $P_{\iota}$ be the mapping cocyclinder as in \Cref{dfn:paths starting in a subspace}. If $L$ and $N$ are $\infty$-groupoids, then $P_{\iota}\simeq L$.
\end{lemma}
\begin{proof}
    The source evaluation $N^{\Delta[1]}\rightarrow N^{\{0\}}$ is a Kan fibration between Kan complexes.
    Moreover, each fibre  $N^{\Delta[1]}_p\simeq p/N,$ $p\in N_0$, is contractible by virtue of being an under-$\infty$-groupoid. This verifies Condition (4) of \cite[00X2]{kerodon}, which implies that $N^{\Delta[1]}\rightarrow N^{\{0\}}$ is an equivalence, or, equivalently, a trivial Kan fibration. Kan fibrations are stable under pullback by \cite[00T5]{kerodon}, so the natural map $s\colon P_{\iota}\to L$ is a Kan fibration. Finally, as trivial Kan fibrations pull back to trivial Kan fibrations, $s$ is one such. As it is in particular a Kan fibration, \cite[00X2]{kerodon} implies that $s$ is an equivalence.
\end{proof}

The preceding lemma is a generalisation of the fact that under-$\infty$-groupoids are contractible, which is the special case when $L$ is a point. Since under-$\infty$-categories can be far from contractible, there is no reason to expect that \Cref{T47D64N} holds for linked $\infty$-categories. Indeed, most linked $\infty$-categories where ${N}$ contains a non-invertible morphism from $\iota({L})$ to ${N}$ provide counterexamples, e.g., $\{0\}\leftarrow\{0\}\hookrightarrow\Delta[1]\amalg_{\{0\}}\Delta[1]$. However, the following weaker statement holds for any linked $\infty$-category:

\begin{lemma}\label{J74MRSH}
    There is an isomorphism $\down{L}{N}\cong\down{M}{N}$.
\end{lemma}
\begin{proof}
    Observe $\down{M}{N}_{0}=\{\alpha\in\EEx_{1} : d_{1}\alpha\in M,\ d_{0}\alpha\in N\}=\Phat_{0}$ so that $\alpha\in \down{M}{N}_{0}$ iff $\alpha=(\Gamma,1)$ with $\Gamma\in (P_{L})_{0}=\down{L}{N}_{0}$. Thus, we have the map $\down{M}{N}_{0}\to\down{L}{N}_{0}$, $(\Gamma,1)\mapsto \Gamma$. This gives a bijection $\down{M}{N}_{0}\cong\down{L}{N}_{0}$.

    Generally, let $\alpha\colon \Delta[1]\times\Delta[n]\to\EEx $ be an element of $\down{M}{N}_{n}$, i.e., $\evv_{0}\alpha\in M_{n}$, $\evv_{1}\alpha\in N_{n}$. Then its restriction along any exit shuffle $\mcal{S}_{j}\colon\Delta[n+1]\hookrightarrow\Delta[1]\times\Delta[n]$ where $j\in\{1,\dots,n+1\}$ is vertical. Thus, 
    \(
        \alpha|_{\mcal{S}_{j}}=(\alpha_{j},j)\in\Phat_{n}
    \) 
    with 
    \(
        \alpha_{j}|_{0,\dots,j-1}\in \iota(L)_{j-1}.
    \)
    We will observe that the collection $\{\alpha_{j}\}\subset N_{j+1}$ assembles to give a map 
    \(
        A\colon\Delta[1]\times\Delta[n]\to N
    \)
    which lies within $\down{L}{N}_{n}$. Indeed, setting 
    \(
        A|_{\mcal{S}_{j}}\coloneqq\alpha_{j}
    \)
    defines $A$ on every non-degenerate $(n+1)$-simplex of $\Delta[1]\times\Delta[n]$ consistently since $\alpha$ itself is well-defined. 
    More precisely, let $\Theta\colon\Delta[\theta]\hookrightarrow\Delta[n+1]$ be some common simplicial subset of $\mcal{S}_{j}$ and $\mcal{S}_{j'}$ in $\Delta[1]\times\Delta[n]$. We have \(\Theta^* A|_{\mcal{S}_{j}}=\Theta^{*}A|_{\mcal{S}_{j'}}\) since $\alpha$ satisfies $\Theta^*\alpha|_{\mcal{S}_{j}}=\Theta^*\alpha|_{\mcal{S}_{j'}}$ so that in particular $\Theta^*\alpha_{j}=\Theta^*\alpha_{j'}$ in $N_{n+1-(j'-j)}$. This yields $A\in N^{\Delta[1]}_{n}=\down{N}{N}_{n}$. Since $\evv_{0}A|_{\mcal{S}_j}=\alpha_{j}|_{0,\dots,j-1}\in\iota(L)_{j-1}$ for any exit index $j$ as remarked above, we have $\evv_{0}A\in\iota(L)_{n}$, giving $A\in\down{L}{N}_{n}$. We have thus constructed a map 
    \(
        \Phi\colon \down{M}{N}\to\down{L}{N},\ 
        \alpha\mapsto  A.
    \)
    
    As for the inverse, let $\beta\colon\Delta[1]\times\Delta[n]\to N$ be an element of $\down{L}{N}_{n}$, i.e., $\evv_{0}\beta\in\iota(L)_{n}$, and let $j<j'$ be exit indices as above. Set 
    \(
        B|_{\mcal{S}_{j}}\coloneqq (\beta|_{\mcal{S}_{j}},j)\in\Phat_{n}.
    \)
    This is well-defined, since 
    \(
        \evv_{0}\mcal{C}_{j}\beta|_{\mcal{S}_{j}}=\beta|_{\mcal{S}_{j}}|_{0,\dots,j-1}=(\evv_{0}\beta)|_{0,\dots,j-1}\in\iota(L)_{j-1},
    \)
    so that $(\beta|_{\mcal{S}_{j}},j)$ is indeed an exit path of index $j$. We have $\Theta^*B|_{\mcal{S}_{j}}=\Theta^*B|_{\mcal{S}_{j'}}$ since $\Theta^*\beta|_{\mcal{S}_{j}}=\Theta^*\beta|_{\mcal{S}_{j'}}$. Thus, $\{B|_{\mcal{S}_{j}}\}$ assembles to give a map $B\colon\Delta[1]\times\Delta[n]\to \EEx$. Moreover, $B$ descends to $\down{M}{N}_{n}$, since the initial vertex of every $(\beta|_{\mcal{S}_{j}},j)$ -- or of any exit path -- is low, so that all vertices of $\evv_{0} B$ are low. We have thus constructed a map 
    \(
        \Psi\colon\down{L}{N}\to\down{M}{N},\ 
        \beta\mapsto B,
    \)
    the inverse to $\Phi$.
\end{proof}

\begin{proof}[Proof of \Cref{T47D64N}]
    We have $P_{\iota}=L\times_{N^{\{0\}}}N^{\Delta[1]}\cong\down{L}{N}$. The stated equivalence follows by composing \Cref{lem:mapping cocylinder} and \Cref{J74MRSH}. In order to show that it is realised by $\Box$, we will use notation from the proof of \Cref{J74MRSH}. Let $b\in L_{n}$, and let $\beta\colon\Delta[1]\times\Delta[n]\to N$ be the degenerate composition $\Delta[1]\times\Delta[n]\overset{\mrm{pr}}{\twoheadrightarrow}\Delta[n]\xrightarrow{\iota(b)}N$, so that $\beta\in\down{L}{N}$. For $j\in\{1,\dots,n+1\}$, we have $\boxed{b}_{\mcal{S}_{j}}=(s_{j-1}\iota(b),j)$, and
    \[
        \Psi(\beta)|_{\mcal{S}_{j}}=(\beta|_{\mcal{S}_{j}},j)=\paren{\paren{\Delta[n+1]\xhookrightarrow{\mcal{S}_{j}}\Delta[1]\times\Delta[n]\twoheadrightarrow\Delta[n]\xrightarrow{\iota(b)}N},j}.
    \]
    The underlying simplex map $\mrm{pr}\circ\mcal{S}_{j}\colon [n+1]\to[n]$ is $i\mapsto(0,i)\mapsto i$ if $i\leq j-1$ and $i\mapsto(1,i-1)\mapsto(i-1)$ if $i\geq j$, so $\mrm{pr}\circ\mcal{S}_{j}=s_{j-1}$. Thus 
    \(
        \Box=\Psi\circ\mrm{pr}^*
    \)
    where $\mrm{pr}^*\colon L\mapsto \down{L}{N}$, the ordinary constant loop inclusion, maps $b$ to $\beta$.
\end{proof}

\section{EPCs of CSSs}

\begin{theorem}\label{YSAL1EK}
    Let $X$ be a conically smooth stratified space of depth $1$ and $\mfrak{S}$ the corresponding linked space. Then $\EEx(\mfrak{S})\simeq \cat{E}{xit}(X)$.
\end{theorem}

In preparation for the proof, we need a few auxiliary constructions. They will be put to use to define an explicit comparison map $\EEx(\mfrak{S})\to\cat{E}{xit}(X)$; see \eqref{7PI7TP4}. Before we proceed, let us note the problem with the obvious Ansatz for such a map that these constructions will solve.
Let us assume for simplicity that $X$ has two strata so that $\mfrak{S}=(M\xleftarrow{\pi}L\xrightarrow{\iota}N)$.

Writing 
\[
\mrm{Unzip}=L\times\R_{\geq0}\amalg_{L\times\R_{>0}}N,
\]
recall from \cite{ayala2017local} that there is a pullback-pushout square 
\[
\begin{tikzcd}
    L\ar[r,hook]\ar[d,two heads,"{\pi}"] & \mrm{Unzip}\ar[d,hook]\\
    M\ar[r,hook] & X
\end{tikzcd}
\]
and an isomorphism 
\begin{equation}\label{A8U7AOO}
    X\cong|\mfrak{S}|\coloneqq C(\pi)\amalg_{L\times\R_{>0}}N
\end{equation}
which uses the \emph{fibrewise open cone}
\[
    C(\pi)=M\amalg_{L\times\{0\}}L\times\mbb{R}_{\geq0}
\]
and an open embedding $I\colon L\times\R_{>0}\hookrightarrow N$ satisfying $I_1=\iota$.

Now, given $(\gamma,e)\in\EEx_n$, we may see $\gamma$ within $|\mfrak{S}|$ by reparametrising $\mrm{Unzip}$ to $\mrm{Unzip}'=L\times\R_{\geq1}\amalg_{L\times\R_{>1}}N$ so that $\gamma$ lies within $\mrm{Unzip}$ and $\gamma|_{0,\dots,e-1}$ within the boundary $\mrm{Unzip}_0=L\times\{0\}$. However, in order for this to be a well-defined $n$-simplex of $X$, $\gamma$ outside of $\gamma|_{0,\dots,e-1}$ must lie wholly in the complement $N\smallsetminus L$ per condition \eqref{0PPIM43}, but this is not imposed on $(\gamma,e)$. We must therefore first elongate $\gamma|_{0,\dots,e-1}$ appropriately before embedding into $X$. The technical difficulty consists therefore in defining any functor $\EEx(\mfrak{S})\to\cat{E}{xit}(X)$ at all; that it is an equivalence will follow essentially from \Cref{T47D64N} and the corresponding result in the conically smooth context.

\begin{construction}\label{2J0Q1NN}
    The standard topological simplices may be alternatively constructed recursively, starting with the empty set, by taking closed cones. We will present a slightly different but equivalent version of \cite[Notation 2.3.2]{ayala2017stratified} and spell out how they assemble into a cosimplicial space $\Delta^\bullet\colon\mbf{\Delta}\to\mrm{Top}$. The details are provided for completeness.
    
    For a topological space $X$, set $D(X)=\ast$ if $X=\emptyset$ and 
    \[
        D(X)\coloneqq X\times[0,1]\amalg_{X\times\{1\}}\ast
    \]
    otherwise, defining the \emph{closed cone} on $X$. We may define $\Delta^n$ for $n\geq0$ iteratively by setting $\Delta^n=D(\Delta^{n-1})$ and $\Delta^{-1}=\emptyset$. We will write $n$ for the cone point, the equivalence class of $\ast\in D(\Delta^{n-1})$. 
    The face maps can be defined as follows: $\partial_n^n\colon \Delta^{n-1}\hookrightarrow\Delta^n$ is the inclusion of the base,
    \begin{equation}\label{SG83M4Z}
        \partial^n_n(x)=(x,0),
    \end{equation}
    and for $i<n$ we define $\partial_i^n\colon \Delta^{n-1}=D(\Delta^{n-2})\hookrightarrow D(\Delta^{n-1})=\Delta^n$ iteratively by 
    \(
        \partial^n_i(x,t)=(\partial_i^{n-1}(x),t).
    \)
    When $n=1$, we define $\partial_0^1\colon\Delta^0\hookrightarrow \Delta^1$ to be $\ast\mapsto (\ast,1)$. The degeneracy maps can be defined similarly: $\sigma^1_0\colon\Delta^1\to\Delta^0$ is unique, so let $n\geq2$. We define $\sigma^n_{n-1}\colon\Delta^n\to\Delta^{n-1}$ by 
    \[
        \sigma^n_{n-1}(x,t)=\begin{cases}
            x, & t<1\\
            n-1, & t=1.
        \end{cases}
    \]
    For $j<n-1$, we define $\sigma^n_j\colon\Delta^n\to\Delta^{n-1}$ by 
    \(
        \sigma^n_j(x,t)=(\sigma^{n-1}_j(x),t).
    \)
    Note that this implies $\sigma^n_{j}(n)=n-1$.
\end{construction}

\begin{construction}\label{O8ZQLHX}
    For $n\geq1$ and $e\in\{1,\dots,n\}$, let
    \[
        \Delta^n_e=\Delta^n\amalg_{\Delta^{e-1}\times\Delta^{\{1\}}}\Delta^{e-1}\times\Delta^1
    \]
    where the gluing map $\Delta^{e-1}\times\Delta^{\{1\}}\hookrightarrow\Delta^n$ is the standard inclusion. 
    The \emph{elongation map} is
            \begin{align*}
                E_n\colon\Delta^n&\to\Delta^n_n,\\
                (x,t)&\mapsto\begin{cases}
                    (x,2t), & t\leq \frac{1}{2}\\
                    (x,2t-1), & t\geq\frac{1}{2}
                \end{cases}
            \end{align*}
            where we wrote $(x,t)\in D(\Delta^{n-1})=\Delta^n$ according to \Cref{2J0Q1NN}. This is well-defined since at $t=\frac{1}{2}$ the gluing identifies $\Delta^{n-1}\times\Delta^{\{1\}}\ni(x,1)\sim(x,0)\in\Delta^n$ per \eqref{SG83M4Z}.
\end{construction}
\begin{construction}    
    For $n\geq1$ and $e\in\{1,\dots,n\}$, the \emph{collapse map} 
            \[
                K_e=K_e^n\colon\Delta^n_n\to\Delta^n_e
            \]
            is defined as follows: its restriction to $\Delta^n\subset\Delta^n_n$ is the identity, and its restriction to $\Delta^{n-1}\times\Delta^1$ is induced by the map 
            \[
                k_e\colon\Delta^{n-1}\times\Delta^1\to\Delta^{n-1}\times\Delta^1
            \]
            induced, using the fact that geometric realisation commutes with cartesian products, by the poset map, $[n-1]\times[1]\to[n-1]\times[1]$, 
            \begin{align*}
                (i,0)\mapsto\begin{cases}
                    (i,0), & i\leq e-1\\
                    (i,1), & i\geq e
                \end{cases}
            \end{align*}
            and $(i,1)\mapsto (i,1)$ for all $i$. We obtain a map to $\Delta^n_e$ by composing as follows:
            \[
                \Delta^{n-1}\times\Delta^1\xrightarrow{k_e}\Delta^{n-1}\amalg_{\Delta^{e-1}\times\Delta^{\{1\}}}\Delta^{e-1}\times\Delta^1\hookrightarrow \Delta^n_e
            \]
            where $k_e$ maps onto the second component and is well-defined on the union since $k_e|_{\Delta^{n-1}\times\Delta^{\{1\}}}=\id$, and the second map is given by $\partial_n\colon\Delta^{n-1}\hookrightarrow\Delta^n$ and $\id_{\Delta^{e-1}\times\Delta^1}$. 
\end{construction}

\begin{proof}[Proof of \Cref{YSAL1EK}]
    Without loss of generality, let $X$ and $\mfrak{S}$ be as discussed above \Cref{2J0Q1NN}. We will exhibit an equivalence 
    \[
        \mbf{R}\colon \EEx(\mfrak{S})\xrightarrow{\sim}\cat{E}{xit}(|\mfrak{S}|).
    \]
    It is the identity on objects, so let $n\geq1$. 
    We will use Lurie's quasi-category model for the target, where $\cat{E}{xit}_n(|\mfrak{S}|)$ consists of those paths $\gamma\colon\Delta^n\to|\mfrak{S}|$ for which there exists a sequence $p_0\leq\dots\leq p_n$ in $[1]$ satisfying 
    \begin{equation}\label{0PPIM43}
        s(\gamma(t_0,\dots,t_i,0,\dots,0))=p_i
    \end{equation}
    where $t_i\neq0$ and $s\colon|\mfrak{S}|\to[1]$ is the stratification (see \cite[\S A.6]{luriehigheralgebra}). 
    In terms of \Cref{2J0Q1NN}, condition \eqref{0PPIM43} reads $s(\gamma(x,t))=p_n$ for $t>0$ and $s(\partial^n_n(\gamma)(x',t'))=p_{n-1}$ for $t'>0$, and so on.
    
    An exit path $(\gamma,e)\in\EEx_n$ induces a map 
    \[
        \Delta^n_e\to|\mfrak{S}|
    \]
    as follows. Since $\mbf{b}\coloneqq\mbf{b}(\gamma,e)=\gamma|_{0,\dots,e-1}\in L_{e-1}$ (recall \Cref{3F1JHL6}), the map $\mbf{b}\times\id\colon \Delta^{e-1}\times\Delta^1\to L\times\R_{\geq0}$ is well-defined and factors through $L\times[0,1]$. Using $I_1=\iota$, this yields the desired map  
    \[
        \gamma\cup(\mbf{b}\times\id)\colon\Delta^n_e\to \mrm{Unzip}=L\times\R_{\geq0}\amalg_{L\times\R_{>0}}N\to|\mfrak{S}|
    \]
    where the final quotient projection identifies the $(e-1)$-path $\mbf{b}\times\{0\}$ with its projection in $M$. We now define $\mbf{R}(\gamma,e)$ to be the composition 
    \begin{equation}\label{7PI7TP4}
        \mbf{R}(\gamma,e)\colon\Delta^n\xrightarrow{E_n}\Delta^n_n\xrightarrow{K_e}\Delta^n_e\xrightarrow{\gamma\cup(\mbf{b}\times\id)}|\mfrak{S}|.
    \end{equation}
    It is a direct check that $\mbf{R}$ is functorial. 

    It now suffices to prove that the derivative 
    \(
        \mbf{R}_\ast\colon\down[\EEx]{M}{N}\to\down[\cat{E}{xit}]{M}{N}
    \)
    is an equivalence. To this end, consider the composition 
    \[
        \mbf{R}_\ast\circ\Box\colon L\to\down[\cat{E}{xit}]{M}{N}.
    \]
    For $\delta\in L_{n}$ and $j\in\{1,\dots,n+1\}$, we have
    \[
        \mbf{b}(s_{j-1}\delta,j)=(s_{j-1}\delta)|_{0,\dots,j-1}=\delta|_{0,\dots,j-1},
    \]
    so, recalling \eqref{K0R63TJ}, we obtain
    \[
        (\mbf{b}\boxed{\delta}_{\mcal{S}_j}\times\id)=\delta|_{0,\dots,j-1}\times\id\colon\Delta^{j-1}\times\Delta^1\to|\mfrak{S}|.
    \]
    Hence
    \[
        \mbf{R}\boxed{\delta}_{\mcal{S}_j}=(K_{j-1}E_n)^*(\boxed{\delta}_{\mcal{S}_j}\cup(\delta|_{0,\dots,j-1}\times\id)) \in\cat{E}{xit}_{n}(|\mfrak{S}|).
    \]
    Now, an equivalence $\Xi\colon\down[\cat{E}{xit}]{M}{N}\xrightarrow{\sim} L$ is given by sending a prism to its source face and restricting the second coordinate along $L\times\{0\}\hookrightarrow\mrm{Unzip}\to|\mfrak{S}|$. In order to obtain the source of the prism $\mbf{R}_\ast\boxed{\delta}$, we observe that if we take $j=n+1$, then 
    \[
        d_{n+1}(\boxed{\delta}_{\mcal{S}_{n+1}})=d_{n+1}(s_n\delta,n+1)=\pi(d_{n+1}s_n\delta)=\pi(\delta)
    \]
    and that $K_{n}=\id$, $E_n|_{d_n\Delta^n}=\id_{\Delta^{n-1}}$. Consequently, 
    \begin{equation}\label{CUC9INY}
        \Xi(\mbf{R}_\ast\boxed{\delta})=\delta,
    \end{equation}
    whence $\mbf{R}_\ast\circ\Box$ is an equivalence and so $\mbf{R}_\ast$ itself is also an equivalence by \Cref{RPISW40}. As $\mbf{R}$ is thus fully faithful, it is an equivalence.
\end{proof}

\section{Functoriality}\label{GPK27YP}

Ayala, Francis and Rozenblyum showed in \cite[Theorem 4.2.8]{ayala2017stratified} that the EPC construction embeds conically smooth stratified spaces fully faithfully into the $\infty$-category of $\infty$-categories (see \cite[\S 3]{lurie2009higher} or \cite[01YV]{kerodon}). In this section, we will prove the same for $\EEx$.

\subsection{The quasi-category of linked spaces}\label{PCO1GNW}

Let $\mrm{Poset}_{\leq 1}$ denote the ordinary category of posets of depth (at most) $1$, and $\mrm{Cat}$ the ordinary (and not the $2$-)category of categories. Readers familiar with d\'ecollages in the sense of \cite{barwick2020exodromy} can safely skip to \Cref{S2KANF2} and are referred to \Cref{VEJ5KDW}.

\begin{definition}
    The \emph{linking functor}
    \[
        \mbf{L}\colon \ocat{Poset}_{\leq1}\to\ocat{Cat}
    \]
    is defined as follows. Let $P\in\ocat{Poset}_{\leq1}$. For every object and every morphism of $P$, $\mbf{L}P$ has an object. We denote the object corresponding to $p\in P$ by $m_p\in\mbf{L}P$, and the object corresponding to $f\colon p_0\to p_1$ in $P$ by $l_f\in\mbf{L}P$. Besides the identities, $\mbf{L}P$ has two arrows as in 
    \[
        m_{p_0}\leftarrow l_f\to m_{p_1}
    \]
    for every morphism $f\colon p_0\to p_1$ in $P$. Note in particular that for every $p\in P$ there is a nontrivial span $m_{p}\leftarrow l_{\id_p}\to m_{p}$ at $m_p$, not merely the identity at $m_p$. There are no further objects or morphisms in $\mbf{L}P$.

    Given a morphism $F\colon P\to P'$ of posets, we define the functor $\mbf{L}F\colon\mbf{L}P\to\mbf{L}P'$ by setting $\mbf{L}F(m_p)=m_{F(p)}$, $\mbf{L}F(l_f)=l_{F(f)}$ on objects and \[
        \mbf{L}F(m_{p_0}\leftarrow l_f\to m_{p_1})=(m_{F(p_0)}\leftarrow l_{F(f)}\to m_{F(p_1)})
    \]
    on the nontrivial morphisms.
\end{definition}

\begin{remark} The linking functor is faithful but not fully so; e.g., the constant functors $(m_\ast\leftarrow l_{\id_\ast}\to m_\ast)=\mbf{L}\ast\to\mbf{L}\ast'$ given by $m_\ast,l_{\id_\ast}\mapsto m_{\ast'}$ and $m_\ast,l_{\id_\ast}\mapsto l_{\id_{\ast'}}$ are not in the image of $\mbf{L}|_{\Hom_{\mrm{Poset}_{\leq1}}(\ast,\ast')}$ which consists solely of the isomorphism $\mbf{L}\ast\cong\mbf{L}\ast'$
\end{remark}

\begin{definition}\label{8CDWKX5}
    The auxiliary category $\widehat{\mscc{LS}}=\widehat{\mscc{LS}}_{\leq1}\subset\ocat{Diag}(\cat{T}{op})$ of \emph{quasi-linked spaces} of depth (at most) $1$ is the subcategory of the category of topological diagrams whose shapes are linking posets, i.e., 
    \[
        \widehat{\mscc{LS}}_{\leq1}=\ocat{Poset}_{\leq1}\times_{\ocat{Cat}}\ocat{Diag}(\cat{T}{op}).
    \]
    where $\ocat{Diag}(\cat{T}{op})\to\ocat{Cat}$, $(D\to\cat{T}{op})\mapsto D$, maps a diagram to its shape.
    It is naturally a topological category by setting 
    \begin{align*}
        \Hom_{\widehat{\mscc{LS}}}((P,\mfrak{S}),(Q,\mfrak{T}))=
        \coprod_{\Phi\colon P\to Q}\prod_{f\colon p_0\to p_1} \Big([\mfrak{S}_{p_0},\mfrak{T}_{\mbf{L}\Phi(p_0)}]\times_{[\mfrak{S}_{f},\mfrak{T}_{\mbf{L}\Phi(p_0)}]}\\ [\mfrak{S}_{f},\mfrak{T}_{\mbf{L}\Phi(f)}]\times_{[\mfrak{S}_f,\mfrak{T}_{\mbf{L}\Phi(p_1)}]} [\mfrak{S}_{p_1},\mfrak{T}_{\mbf{L}\Phi(p_1)}]\Big)
    \end{align*}
    where the disjoint union is over poset maps, the product is over arrows in $P$, and $[-,-]=\Hom_{\cat{T}{op}}(-,-)$. 
\end{definition}

Concretely, a quasi-linked space is a pair
\(
    (P,\mfrak{S}\colon\mbf{L}P\to\cat{T}{op})
\) 
where $P\in\ocat{Poset}_{\leq1}$ and $\mfrak{S}$ is a functor. Given $f\colon p_0\to p_1$ in $P$, we write
\(
    \mfrak{S}(m_{p_0}\leftarrow l_{f}\to m_{p_1})=(\mfrak{S}_{p_0}\xleftarrow{\pi_{f}}\mfrak{S}_{f}\xrightarrow{\iota_{f}} \mfrak{S}_{p_1})=(M_{p_0}\xleftarrow{\pi_{f}}L_f\xrightarrow{\iota_{f}} M_{p_1}).
\)
A morphism $\phi\colon(P,\mfrak{S}\colon\mbf{L}P\to\cat{T}{op})\to(Q,\mfrak{T}\colon\mbf{L}Q\to\cat{T}{op})$ is a pair 
\(
    (\Phi\colon P\to Q,\phi\colon\mfrak{S}\to\mfrak{T})
\)
where $\Phi$ is a morphism in $\ocat{Poset}_{\leq1}$ and $\phi$ is a natural transformation 
\(
    \phi\colon\mfrak{S}\Rightarrow\mfrak{T}\circ\mbf{L}\Phi.
\)
We will sometimes suppress the posets in notation.

\begin{remark}\label{Y58GWST}
    Due to the parametrisation over poset maps mediated by the linking functor, $\widehat{\mscc{LS}}$ differs substantially from the category of topological spans. For instance, a $P$-stratified quasi-linked space consists, for $P=[1]$, of five spaces organised into three spans: 
    \[
        \mfrak{S}=(L_{\id_0}\rightrightarrows M_{0}\leftarrow L_{0<1}\to M_1\leftleftarrows L_{\id_1}).
    \]
    A map $\phi\colon([1],\mfrak{S})\to([1],\mfrak{S}')$ need not map the spans in $\mfrak{S}$ to the corresponding spans in $\mfrak{S}'$; rather, when the accompanying poset map $\Phi\colon[1]\to[1]$ factors, say, through $\{0\}$, then $\phi\colon\mfrak{S}\Rightarrow\mfrak{S}'\circ\mbf{L}\Phi$ consists of three topological span maps 
    \[
        (L_{\id_0}\rightrightarrows M_0),(M_0\leftarrow L_{0<1}\to M_1), (M_1\leftleftarrows L_{\id_1})\to (L'_{\id_0}\rightrightarrows M'_0)
    \]
    such that the maps out of $M_0$ and $M_1$ coincide in all three but with no further conditions.
\end{remark} 

\begin{definition}\label{S2KANF2}
    The category $\mscc{LS}\subset\widehat{\mscc{LS}}$ of \emph{linked spaces} of depth (at most) $1$ is the full topological subcategory of $\widehat{\mscc{LS}}$ consisting of those quasi-linked spaces $(P,\mfrak{S}\colon\mbf{L}P\to\cat{T}{op})$ which satisfy the following conditions:
    \begin{enumerate}
        \item For every non-identity morphism $f$ in $P$, the map $\pi_f$ is a fibration and the map $\iota_f$ is a cofibration.
        \item For every object $p$ in $P$, we have $L_{\id_p}=M_p$ and $\pi_{\id_p}=\iota_{\id_p}=\id_{M_p}$. 
    \end{enumerate}
\end{definition}

\begin{remark}\label{ZCOQH4R}
    \Cref{S2KANF2} removes the extravagance of \Cref{8CDWKX5} pointed out in \Cref{Y58GWST}. A map $\phi\colon([1],\mfrak{S})\to([0],\mfrak{S}')$ as discussed there is, in $\mscc{LS}$, determined by a single span map $(M_0\leftarrow L_{0<1}\to M_1)\to (M_0'\rightrightarrows M_0')$. Such a span map is tantamount to a map 
    \(
        M_0\amalg_{L_{0<1}}M_1\to M_0'
    \)
    so in particular every conically smooth map from a depth-$1$ space into a smooth manifold is of this type.
    This is analogous to the condition that the lower stratum in an ordinary stratified space be in the closure of the higher.
\end{remark}
\begin{remark}\label{VEJ5KDW}
    The secondition condition of \Cref{S2KANF2} makes $\mscc{LS}$ a subcategory of $\mrm{Diag}_P=\Fun(R(P)^{\mrm{op}},\mrm{Top})$, the topological version of the category of diagrams over $P$ in the sense of Douteau: see \cite[Definition 1.10]{douteau2021homotopy}. Over depth-$1$ posets, these coincide with spatial d\'ecollages in the sense of Barwick--Glasman--Haine \cite[\S 2.6]{barwick2020exodromy}. Removing said condition amounts to allowing degenerate subdivisions of $P$. We have chosen to circumvent the construction of $R$ for concreteness as we only consider depth-$1$ posets.
\end{remark}

\begin{example}\label{UTL5VOM}
    Consider the linked space $\mrm{sp}=([1],\mrm{sp})$ where $\mrm{sp}=(\ast\leftarrow\ast\to\ast)$. A map $\mrm{sp}\to M=([0],\mrm{const}_M)$ is determined by a point in $M$. In contrast, a map $\phi\colon\mrm{sp}\to([1],\mfrak{S})$ with $\Phi=\id_{[1]}$ is determined by a link point $l\in L=\mfrak{S}_{0<1}$, which can be interpreted as an `infinitesimal exit path' $\pi(l)\to\iota(l)$ via the box map of \Cref{RPISW40}.
\end{example}

\begin{example}
    Consider the poset $I=\{\pm<_{\pm}1\}$ and the linked space $\mfrak{I}=(I,\mfrak{I}\colon\mbf{L}I\to\cat{T}{op})$ where $\mfrak{I}_-=\ast=\mfrak{I}_{+}$, $\mfrak{I}_1=\R$, $\mfrak{I}_{<_{\pm}}=\ast$, and $\iota_{<_-}=0$, $\iota_{<_+}=1$. Let now $\phi\colon\mfrak{I}\to([1],\mfrak{S})$ be a map satisfying $\Phi(0)=0$ and $\Phi(+)=\Phi(1)=1$. Then $\phi\colon \mfrak{I}\Rightarrow\mfrak{S}\circ\mbf{L}\Phi$ is a diagram of type 
    \[
    \begin{tikzcd}[sep=small]
        & \ast\ar[dl]\ar[dr,"{0}"]\ar[dd] & & \ast\ar[dl,"{1}"']\ar[dr]\ar[dd]\\
        \ast\ar[dd] & & \R\ar[dd,"{\gamma}"] & &\ast\ar[dd] \\
        & L\ar[dl,two heads,"{\pi}"]\ar[dr,hook,"{\iota}"'] & & N\ar[dl,"{\id}"]\ar[dr,"{\id}"'] & \\
        M & & N & & N
    \end{tikzcd}
    \]
    so $\gamma$ specifies an exit path in $\EEx(\mfrak{S})$ from $\pi(\gamma(0))$ to $\gamma(1)$, namely $(\gamma|_{[0,1]},1)\in\Phat_0$. Conversely, any exit path may be extended to a map $\R\to N$ constantly at its endpoints to define a map $\phi\colon(I,\mfrak{I})\to([1],\mfrak{S})$ with $\Phi$ as above. 
\end{example}

\begin{definition}
    The \emph{$\infty$-category of linked spaces of depth at most $1$} is the homotopy coherent nerve $\Nhc(\mscc{LS})$ of the topological category $\mscc{LS}=\mscc{LS}_{\leq1}$ of linked spaces of depth at most $1$. We will denote it by $\mscc{LS}$ if there is no risk of confusion.
\end{definition}

\subsection{Fully-faithfulness}\label{8592U1F}

Let $\Ctop$ denote the locally-Kan simplicial category of all $\infty$-categories, where $\Hom_{\Ctop}(\mscc{C},\mscc{D})=\mrm{Fun}^{\simeq}(\mscc{C},\mscc{D})$ is the core of the functor $\infty$-category $\mrm{Fun}_\bullet(\cat{C},\cat{D})=\Hom_{\mrm{sSet}}(\cat{C}\times\Delta^\bullet,\cat{D})$ generated by those natural transformations which are natural isomorphisms. As in \cite[\S 3]{lurie2009higher}, we will denote by $\Cinfty=\Nhc(\Ctop)$ the homotopy-coherent nerve of $\Ctop$.

\begin{construction}\label{5X56IK6}
We will promote the EPC construction to a topological functor
\[
    \EEx\colon \mscc{LS}\to\Ctop.
\]
It will suffice to consider maps between posets of at most two elements, so let $([1],\mfrak{S}), ([1],\mfrak{T}), ([0],\mfrak{U})\in\mscc{LS}$. 
A map $\phi\colon([1],\mfrak{S})\to([1],\mfrak{T})$ with $\Phi=\id_{[1]}$ is an ordinary span map (cf.\ \Cref{ZCOQH4R}) and naturally induces an $\infty$-functor 
\[
    \phi_!\colon\EEx(\mfrak{S})\to\EEx(\mfrak{T}).
\] 
The case $\Phi\neq\id_{[1]}$ is tantamount to a map $\phi\colon([1],\mfrak{S})\to([0],\mfrak{U})$. We set 
\[
    \EEx([0],\mfrak{U})=\mfrak{U}_0,
\]
suppressing the distinction between a space and its complex of singular chains. Now, $\phi\colon\mfrak{S}\Rightarrow\mfrak{U}\circ\mbf{L}\Phi$ is a span map of type $(M\leftarrow L\to N)\to(M'\rightrightarrows M')$ which is already determined by the map $\phi_1\colon N\to M'$ (cf.\ \Cref{ZCOQH4R}). This induces an $\infty$-functor 
\[
    \phi_!\colon\EEx(\mfrak{S})\to M'
\]
by mapping an exit path $(\gamma,e)\in\Phat_{k-1}\subset\EEx_k$ to 
\[
    \phi_!(\gamma,e)=\phi_1(\gamma).
\]
It is easily seen that this is functorial since $\phi_0(\pi(\mbf{b}(\gamma,e)))=\phi_1(\mbf{b}(\gamma,e))$.
Finally, a map of type $\phi\colon([0],\mfrak{U})\to([1],\mfrak{S})$ is tantamount to an ordinary map $M'\to M$ or $M'\to N$ depending on $\Phi\colon[0]\to[1]$.

This defines $\EEx$ as an ordinary functor. We will now define its extension
\[ 
    \Hom_{\mscc{LS}}(([1],\mfrak{S}),([1],\mfrak{T}))_\bullet\to\Hom_{\Ctop}(\EEx(\mfrak{S}),\EEx(\mfrak{T}))_\bullet
\]
onto the full morphism spaces. 
Recalling \Cref{8CDWKX5}, let us write $\Hom_{\mscc{LS}}=\mcal{F}_{0}\amalg\mcal{F}_1\amalg\mcal{F}_{01}$ where $\mcal{F}_{i}$ is the connected component at $\Phi=\sigma_{i}\colon [1]\to[1]$ for $i=0,1$ and $\mcal{F}_{01}$ is the connected component at $\Phi=\id_{[1]}$.
If $L=\mfrak{S}_{0<1}=\emptyset$, we set $(\phi_i)_!|_{X}=(\phi_i)|_X$ for $i=0,1$ and $X=M,N$.

Suppose now that $L\neq\emptyset$. By construction, an $n$-simplex in $\mcal{F}_0$ is of type 
\[
    H=(H_M,H_L,H_N)\in[M,M']_n\times_{[L,M']_n}[L,M']_n\times_{[L,M']_n}[N,M']_n.
\]
where $H_-\colon -\times\Delta[n]\to M$. We define the restriction to the core by $H_!|_{\EEx(\mfrak{S})^{\simeq}}=H_M\amalg H_N\colon\EEx(\mfrak{S})^{\simeq}=M\amalg N\to M'\subset\EEx(\mfrak{T})$. Now let $(\gamma,e)\in\Phat_{k-1}\subset\EEx_k(\mfrak{S})$ and let $\Gamma\colon\Delta[k]\to\EEx(\mfrak{S})\times\Delta[n]$ be given such that $\Gamma_1=(\gamma,e)$. Then $\Gamma$ induces 
\[
    \widetilde{\Gamma}=(\gamma,\Gamma_2)\colon\Delta[k]\to N\times\Delta[n]
\]
and thus $H_N(\widetilde{\Gamma})=H_N\circ\widetilde{\Gamma}\colon\Delta^k\to M'$. We set 
\begin{equation}\label{WDGZLBT}
    H_!(\Gamma)=H_N(\widetilde{\Gamma}).
\end{equation}
This gives a map $H_!\colon\EEx(\mfrak{S})\times\Delta[n]\to M'$ which defines 
\(
    \Hom_{\mscc{LS}}\supset\mcal{F}_0\to\mscc{F}_0\subset\Hom_{\Ctop}
\),
$H\mapsto H_!$.
The restriction $\mcal{F}_1\to\mscc{F}_1$ is induced analogously by replacing $M'$ with $N'=\mfrak{T}_1$ throughout.
Similarly, given an $n$-simplex 
\(
    H=(H_M,H_L,H_N)\in[M,M']_n\times_{[L,M']_n}[L,L']_n\times_{[L,N']_n}[N,N']_n
\) 
in $\mcal{F}_{01}$, where $L=\mfrak{T}_{0<1}$, and given $\Gamma=((\gamma,e),\Gamma_2)\in(\EEx(\mfrak{S})\times\Delta^n)_k$, $\widetilde{\Gamma}=(\gamma,\Gamma_2)\in (N\times\Delta[n])_k$ as above, we set 
\begin{equation}\label{4F73MJK}
    H_!(\Gamma)=(H_N(\widetilde{\Gamma}),e)
\end{equation}
which defines the restriction $\mcal{F}_{01}\to\mscc{F}_{01}$.
\end{construction}

\begin{theorem}\label{JG5DRK5}
    The topological EPC functor of \Cref{5X56IK6} induces a fully faithful functor
    \[
        \EEx\colon\mscc{LS}_{\leq1}\hookrightarrow\Cinfty.
    \]
    of $\infty$-categories.
\end{theorem}

The reason for our passing to homotopy-coherent nerves is that $\EEx=(-)_!\colon \Hom_{\mscc{LS}}\to\Hom_{\Ctop}$ is merely a weak equivalence, as we will show. Since the morphism spaces in homotopy-coherent nerves are equivalent to those in the original locally-Kan category (this is due to Joyal; see also Hebestreit--Krause \cite{hebestreit2020mapping} for a direct proof), \Cref{JG5DRK5} will follow.

Before we prove \Cref{JG5DRK5}, we will use it to produce three classes of counterexamples to the three weak versions of the conically smooth stratified homotopy hypothesis discussed in \Cref{H09VB9M}. 
\subsubsection{Class I: non-finite local links}

Let $B$ be a connected smooth manifold without boundary whose fundamental group is not purely torsion so that the fibres of its universal cover $\pi\colon \widetilde{B}\to B$ have infinitely many connected components. Let $\widetilde{B}\to Y$ be any cofibration. Each can be chosen finite, e.g., $(S^1\leftarrow \mbb{R}\hookrightarrow\mbb{R}^2)$. The idea is to violate the compactness of the local links in a CSS. 

\begin{corollary}\label{JWCBC9T}
    The $[1]$-layered $\infty$-category $\EEx(B\leftarrow \widetilde{B}\to Y)$ is not equivalent to the EPC of a CSS. If $B, \widetilde{B}$ and $Y$ are finite, then so is $\EEx(B\leftarrow \widetilde{B}\to Y)$.
\end{corollary}
\begin{proof}
    Suppose $X$ is a CSS with associated linked manifold $(M\xleftarrow{\pi'} L\to N)$ such that $\cat{E}{xit}(X)\simeq\EEx(B\leftarrow \widetilde{B}\to Y)$. By \Cref{YSAL1EK} we have $\EEx(M\leftarrow L\to N)\simeq\EEx(B\leftarrow \widetilde{B}\to Y)$ and by \Cref{JG5DRK5} we have a homotopy equivalence $(B\leftarrow \widetilde{B}\to Y)\simeq (M\leftarrow L\to N)$ of spans. In particular, we obtain the commutative square
    \[
    \begin{tikzcd}
        \widetilde{B}\ar[d,"{\pi}"]\ar[r,"{\sim}"',"{\overline{f}}"] & L\ar[d,"{\pi'}"]\\
        B\ar[r,"{\sim}"',"{f}"] & M
    \end{tikzcd}
    \]
    where the vertical maps are fibre bundles and $\overline{f}$ is a homotopy equivalence covering the homotopy equivalence $f$, i.e., a fibre-homotopy equivalence. Therefore, the fibres of $\pi$ and $\pi'$ are homotopy equivalent. The fibres of $\pi'$ are compact since around each point $q\in M$ there exists a basic open of type $U\cong \mbb{R}^k\times C(Z)$ such that $(\pi')^{-1}(q)\cong Z$ where $Z$ is a closed smooth manifold. But the fibres of $\pi$ have infinitely many connected components by assumption, contradicting the compactness of $Z$.
    The second statement follows from \cite[Proposition 2.11]{volpe2024finitenessfinitedominationstratified}.
\end{proof}

\begin{remark}
    The local links of $\EEx(B\leftarrow\widetilde{B}\to Y)$ are not finite. A similar example was given by Volpe in \cite[Remark 2.14]{volpe2024finitenessfinitedominationstratified}.  
\end{remark}

\subsubsection{Class II: finite local links}

Besides the compactness of the spaces $Z$ that appear in the proof of \Cref{JWCBC9T}, another characteristic property of a CSS over $[1]$ is that the link embedding $\iota\colon  L\hookrightarrow N$ has a (rank-$1$) normal framing. This motivates a second class of examples.

Recall that a smooth $n$-manifold $Y$ is called \emph{stably (normally) frameable} if there exists a natural number $k\geq0$ together with a bundle isomorphism $\Tangent Y\oplus\varepsilon^k\cong\varepsilon^{n+k}$, where $\varepsilon$ denotes the trivial real bundle of rank $1$. A closed embedding $Y\hookrightarrow\mbb{R}^{K}$ whose normal bundle is trivialisable makes $Y$ stably frameable. The Stiefel--Whitney classes of a stably frameable smooth manifold vanish. Contractible smooth manifolds are stably frameable, and if there is a homotopy equivalence $Y\simeq Y'$ of compact manifolds and the Stiefel--Whitney classes of $Y$ vanish, then so do those of $Y'$ (using Wu's formula and the homotopy invariance of Steenrod operations).

\begin{corollary}\label{DM62IBZ}
    Let $\Lambda$ be compact smooth manifold with a non-vanishing Stiefel--Whitney class, and let $\Lambda\hookrightarrow \mbb{R}^K$ be a closed embedding. Then the finite $[1]$-layered $\infty$-category $\EEx(\ast\leftarrow \Lambda\xhookrightarrow{} \mbb{R}^K)$ has contractible strata, finite local links, and is not equivalent to the EPC of a CSS.
    \end{corollary}
    \begin{proof}
    Suppose that $X$ is a CSS with associated linked manifold $(M\leftarrow L\to N)$ such that $\cat{E}{xit}(X)\simeq\EEx(\ast\leftarrow \Lambda\to \mbb{R}^K)$. By \Cref{YSAL1EK} we have $\EEx(M\leftarrow L\to N)\simeq\EEx(\ast\leftarrow \Lambda\to\mbb{R}^K)$ and by \Cref{JG5DRK5} we have a homotopy equivalence $(M\leftarrow L\to N)\simeq (\ast\leftarrow \Lambda\to \mbb{R}^K)$ of spans. In particular, $N$ is contractible, and since $X$ is a depth-$1$ CSS, the map $L\xhookrightarrow{} N$ has a normal framing. Hence $L$ stably frameable. Moreover, the map $L\to M$ is a smooth fibre bundle with closed smooth fibre $Z$ over the contractible smooth manifold $M$, making $Z$ also stably frameable.\footnote{Let $\Tangent L\oplus \varepsilon^k\cong\varepsilon^N$ be a stable framing on $L$. Over a contractible trivialising neighbourhood $U\subset M$ we have a bundle isomorphism $\Tangent(Z\times U)\cong \Tangent Z \oplus \varepsilon^{\dim M}$. Since $Z\times U$ is open in $L$, we have $\varepsilon^{N}\cong\Tangent L|_{Z\times U}\oplus \varepsilon^k\cong\Tangent (Z\times U)\oplus \varepsilon^k\cong\Tangent Z\oplus \varepsilon^{\dim M}\oplus \varepsilon^k$, yielding a stable framing on $Z$.} But then $Z\simeq Z\times M\simeq L\simeq\Lambda$ -- a contradiction.
    \end{proof}

\subsubsection{Class III: contractible local links}\label{PA27WQL} 

Recall that a space is called a \emph{homology sphere} if its singular homology coincides with that of a sphere. 
Lefschetz duality implies that the boundary of a compact contractible manifold is a homology sphere. This leads to the following:

\begin{corollary}\label{761A5V7}
    Let $Y$ be a closed smooth manifold which is not a homology sphere and let $Y\hookrightarrow\mbb{R}^K$ be a closed embedding. Then the finite $[1]$-layered category $\EEx(Y\xleftarrow{\id}Y\hookrightarrow\mbb{R}^K)$ has contractible local links and is not equivalent to the EPC of a compact CSS.
\end{corollary}
\begin{proof}
    For simplicity, let us assume $Y$ connected. Suppose $X$ is a compact CSS with associated linked space $(M\leftarrow L\to N)$. Using \Cref{YSAL1EK,JG5DRK5} we obtain a homotopy equivalence $(Y\leftarrow Y\to\mbb{R}^K)\simeq(M\leftarrow L\to N)$ of spans. In particular, $M$ is connected and $N$ is contractible. Let now $x\in M$ be given and let $U\cong\mbb{R}^{n-1}\times C(Z)$ be a basic open centred at $x$. Then $L\to M$ is a smooth $Z$-bundle homotopy equivalent to $Y$ over a base that is homotopy equivalent to $Y$, so $Z$ is contractible. Since $Z$ is a closed smooth manifold, we obtain $Z=\ast$. But then $U\cong\mbb{H}^n$ is the $n$-dimensional upper half-space, and the conically smooth atlas on $X$ gives it the structure of a compact smooth manifold with boundary such that $M=\partial X$ and $N=X^{\circ}$. But then $M$ is a homology sphere -- a contradiction.
\end{proof}

\subsubsection{The proof of \Cref{JG5DRK5}}

Let us observe that the existence of different classes of maps of linked spaces depending on the accompanying poset maps is also reflected in maps between EPCs. Throughout, let $([1],\mfrak{S}), ([1],\mfrak{T})\in\mscc{LS}$.

\begin{notation*}
    We write $\mscc{F}_0 \subset\Hom_{\Ctop}(\EEx(\mfrak{S}),\EEx(\mfrak{T}))$ for the subspace consisting of those functors $F\colon\EEx(\mfrak{S})\to\EEx(\mfrak{T})$ which factor through $M'=\mfrak{T}_0$; $\mscc{F}_1$ for those which factor through $N'=\mfrak{T}_1$, and $\mscc{F}_{01}=\Hom_{\Cinfty}(\EEx(\mfrak{S},\EEx(\mfrak{T})))\smallsetminus(\mscc{F}_0\cup\mscc{F}_1)$.
\end{notation*}

\begin{lemma}
    $\Hom_{\Ctop}(\EEx(\mfrak{S}),\EEx(\mfrak{T}))=\mscc{F}_0\amalg\mscc{F}_1\amalg\mscc{F}_{01}$.
\end{lemma}
\begin{proof}
    The paths in question are natural isomorphisms.
\end{proof}

\begin{definition}
    Let $\mfrak{S}$ and $\mfrak{T}$ be two linked spaces. We call the image of $\EEx\colon\Hom_{\mscc{LS}}(\mfrak{S},\mfrak{T})\to\Hom_{\Ctop}(\EEx(\mfrak{S}),\EEx(\mfrak{T}))$ the space of \emph{tame maps} (and higher tame homotopies), and will denote it by $\Hom^\tame(\EEx(\mfrak{S}),\EEx(\mfrak{T}))$.
\end{definition}

\begin{remark}\label{3REAG6N}
    Let $F\colon\EEx(\mfrak{S})\to\EEx(\mfrak{T})$ be given and suppose, without loss of generality, that $F\in\mscc{F}_{01}$. The induced map $F_*\colon\EEx(\mfrak{S})^{\Delta[1]}\to\EEx(\mfrak{T})^{\Delta[1]}$, which we will call the \emph{derivative} of $F$, restricts to the map 
    \[
        F_*\colon\down{M}{N}\to\down{M'}{N'}.
    \]
    Note, however, that $F$ induces another, similar map:
    \[
        (F|_N)_*'\colon\down{M}{N}\xrightarrow{\Phi}\down{L}{N}\xrightarrow{(F|_N)_*}(N')^{\Delta^1}
    \]
    where $\Phi$ is the isomorphism from \Cref{J74MRSH}, with inverse $\Psi$. If $F$ is tame, then (recall \eqref{WDGZLBT} and \eqref{4F73MJK}) $(F|_N)_*$ factors through $\down{L'}{N'}\hookrightarrow(N')^{\Delta^1}$ and $F_*$ is in fact equal to the resulting map 
    \[
    \down{M}{N}\xrightarrow{\Phi}\down{L}{N}\xrightarrow{(F|_N)_*}\down{L'}{N'}\xrightarrow{\Psi}\down{M'}{N'}.
    \]
    If $F$ is not tame, this need not be true, as the following example illustrates in the simplest case.
\end{remark}

\begin{example}
    Recall from \Cref{UTL5VOM} the linked space $\mrm{sp}=(\ast\leftarrow\ast\to\ast)$ and that a map $\mrm{sp}\to\mfrak{S}$ is determined by a single link point $l\in L$, which is in turn determined by the exit path $\boxed{l}=(s(l),1)$ with degenerate underlying path in $N$. Conversely, all tame maps are of this type, but not every map $F\colon\EEx(\mrm{sp})=\ast^{\triangleleft}\to\EEx(\mfrak{S})$ is of this type; instead, $F$ is determined by an arbitrary exit path $F(s(\ast,1))=(\gamma,1)$ where $\gamma$ need not be constant. Nevertheless, $F$ is homotopic to the tame map $F^t=(\pi(\gamma(0)),\gamma(0),\gamma(0))_!\colon\EEx(\mrm{sp})\to\EEx(\mfrak{S})$. 
\end{example}

In order to prove \Cref{JG5DRK5}, we will show that the map on morphism spaces is an equivalence by showing that it in turn is essentially surjective and fully faithful. We start with the former:

\begin{lemma}\label{4TH5BVK}
    Every map $F\colon\EEx(\mfrak{S})\to\EEx(\mfrak{T})$ is homotopic to a tame map.
\end{lemma}

\begin{proof}
    Suppose $F\in\mscc{F}_{01}$; the cases $F\in\msc{F}_0$ and $F\in\msc{F}_1$ are analogous. The induced map 
    \[
        h^F\colon L\xhookrightarrow{\Box} \down{M}{N}\xrightarrow{F_*}\down{M'}{N'}\overset{\Phi}{\cong}\down{L'}{N'},
    \]
    fits into the diagram 
    \[
    \begin{tikzcd}
        L \ar[d,"{\iota}"',hook]\ar[r,"{h^F}"] & (N')^{\Delta^1}\ar[d,"{\evv_1}"]\\
        N\ar[r,"{F|_{N}}"']\ar[ur,dotted,"{H^F}"] & N'
    \end{tikzcd}
    \]
    where the lift $H^F$ exists because $\iota$ is a cofibration. Let us write $h^F_0\colon L\to L'$ for the evaluation at $0$, and similarly for $H^F_0\colon N\to N'$. Now, the triple
    \[
        F^\tame=(F|_M,h^F_0,H^F_0)\colon\mfrak{S}\to\mfrak{T}
    \] 
    is a span map, and we claim that there is a homotopy
    \[
        F\sim F^\tame_!
    \]
    in $\Hom_{\Ctop}(\EEx(\mfrak{S}),\EEx(\mfrak{T}))$.

    On $M$, we take the homotopy to be constant, and on $N$ we take it to be given by $H$ itself. It remains to extend it to exit paths. First, observe that such an extension can be extended from a homotopy of the following type. There are two induced derivatives 
    \[
        F'_\ast, (H^F_0)_\ast\colon \down{L}{N}\to\down{L'}{N'}
    \]
    where we write 
    \[
        F_*'= \Phi'F_*\Psi\colon \down{L}{N}\xrightarrow{\Psi}\down{M}{N}\xrightarrow{F_*}\down{M'}{N'}\xrightarrow{\Phi'}\down{L'}{N'}.
    \] 
    Note that $(\gamma,e)=\mcal{S}_{e}^{\ast}\mcal{C}_{e}^{\ast}(\gamma,e)=\mcal{S}_e^*\Psi\Phi\mcal{C}_e^*(\gamma,e)$, thus
    \begin{align*}
        F(\gamma,e)&=\mcal{S}_e^*F_*\mcal{C}_e^*(\gamma,e)\\
        &=\mcal{S}_e^*\Psi'F'_*\Phi\mcal{C}_e^*(\gamma,e)
    \end{align*}
    and similarly for $H^F_0$. 
    Consequently, if there is a homotopy 
    \[
        (\mbf{H}\colon F'_\ast\to(H^F_0)_\ast)\colon\down{L}{N}\to {\down{L'}{N'}}^{\Delta[1]}
    \] 
    which it covers the homotopy $(\id_{h^F_0},H^F)\colon (L\times N)\to \Map(\Delta^1, L'\times N')$ along the evaluations at $0$ and $1$, it would induce, with a slight abuse of notation, the full homotopy on exit paths:
    \begin{align*}
        \mbf{H}\colon\EEx(\mfrak{S})&\to\Fun(\Delta[1],\EEx(\mfrak{T}))\\
        (\gamma,e)&\mapsto(\mbf{H}\colon\mcal{S}_e^*\Psi'F'_*\Phi\mcal{C}_e^*(\gamma,e)\to \mcal{S}_e^*\Psi'(H^F_0)_*\Phi\mcal{C}_e^*(\gamma,e)).
    \end{align*}
Here, the latter $\mbf{H}$ stands more precisely for $\Psi'$ applied to the map 
\begin{equation}\label{NCOTBV1}
    \Delta[k]\times\Delta[1]\xrightarrow{\mcal{S}_{e}\times\id}\Delta[1]\times\Delta[k-1]\times\Delta[1]\xrightarrow{\mbf{H}(\Phi\mcal{C}_{e}^*(\gamma,e))}N'
\end{equation}
where we used $\mbf{H}(\Phi\mcal{C}_e^*(\gamma,e))\in({\down{L'}{N'}}^{\Delta[1]})_{k-1}\subset\Hom(\Delta[1]\times\Delta[k-1],\Fun(\Delta[1],N'))\cong\Hom(\Delta[1]\times\Delta[k-1]\times\Delta[1],N')$.
The condition that $\mbf{H}$ cover $(\id_{h^F_0},H^F)$ ensures that the above, together with its restrictions to $M$ and $N$ as already specified, is functorial. Since $(F^\tame)'_*=(H^F_0)_*$ holds by construction (recall \eqref{WDGZLBT} and \eqref{4F73MJK}), this would be a homotopy of the required type. 

It remains to construct the desired homotopy $\widehat{H}\colon F'_*\to(H^F_0)_*$.\footnote{Of course $F'_*$ and $(H^F_0)'_\ast$ are homotopic since their restrictions along the equivalence $\Phi\Box\colon L\hookrightarrow \down{L}{N}$ coincide, but a priori not over $(\id_{h^F_0},H^F)$. This will be evident in our construction.} We will perform the construction in two steps, by first giving a homotopy $K_1\colon F'_\ast\to F''_\ast$ to an auxiliary map $F''_\ast$ that is implicit in $F$ itself, and then a homotopy $K_2\colon F''_\ast\to (H^F_0)_\ast$ that will be induced by $H^F$. 

In order to construct $K_1$, let $(\gamma,1)\in\down{M}{N}_0\subset\EEx_1(\mfrak{S})$ be an exit $1$-path starting at $L$. We first observe that $(\gamma,1)$ is canonically a composition of $\boxed{\gamma_0}=(s_0\gamma(0),1)$ and $\gamma$, and this is witnessed by the exit $2$-path $(s_0\gamma,1)\in\EEx_2(\mfrak{S})$. Consequently, $F(\gamma,1)$ is canonically the composition of the image of a constant exit loop and the image of a path under $F|_N$:
\[
    (s_0\gamma,1)=
    \left(
    \begin{tikzcd}[]
        \bullet\ar[r,"{\gamma}"] & \bullet\\
        \bullet\ar[u,"{\boxed{\gamma_0}}"]\ar[ur,"{(\gamma,1)}"'] &
    \end{tikzcd}
    \right)
    \mapsto F(s_0\gamma,1)=
    \left(
    \begin{tikzcd}
        \bullet\ar[r,"{F|_N\gamma}"] & \bullet \\
        \bullet\ar[u,"{F_*\boxed{\gamma_0}}"]\ar[ur,"{F(\gamma,1)}"']
    \end{tikzcd}
    \right).
\]
Note that $F_*\boxed{\gamma_0}=h^F(\gamma(0))$ by definition. Now, the map
\begin{align*}
    F''_\ast\colon\down{L}{N}&\xrightarrow{(h^F\circ\evv_0)\times (F|_N)_*} \Map(\Delta^1,N')\times_{N'}\Map(\Delta^1,N')\\
    &\cong\Map(\Delta^1_1,N') \\
    &\xrightarrow{E_1^*}  \Map(\Delta^1,N')
\end{align*}
factors by construction through $\down{L'}{N'}\subset\Map(\Delta^1,N')$. Here, the left leg of the pullback is evaluation at $1$ and the right leg is evaluation at $0$, and the map $E_1\colon\Delta^1\to\Delta^1_1=\Delta^1\vee\Delta^1$ is the natural map defined in \Cref{O8ZQLHX}. We have thus defined the map $F''_\ast\colon\down{L}{N}\to\down{L'}{N'}$. The consideration above shows that it is homotopic to $F'_\ast$; an explicit homotopy $K_1\colon F'_*\to F''_*$ is given by 
\begin{align*}
    K_1\colon\down{L}{N}&\to \Map(\Delta^1,\down{L'}{N'})\\
    \gamma&\mapsto F'_*\mcal{C}_1^*s_0\gamma.
\end{align*}
Indeed, the map $\Phi(\gamma,1)\mapsto F'_*\Phi\mcal{C}^*_1(s_0\gamma,1)$ is the desired homotopy, but $\Phi(\gamma,1)=\gamma$ and it is a direct check that $\Phi\mcal{C}^*_1(s_0\gamma,1)=\mcal{C}^*_1s_0\gamma$ using the identities $\mcal{S}_1^*\mcal{C}^*_1=\id$ and $\mcal{S}_2^*\mcal{C}^*_1=s_0d_1$ to show that the two non-degenerate $2$-simplices of the square $\Phi\mcal{C}_1^*(s_0\gamma,1)$ are both given by $s_0\gamma$.\footnote{This shows that $\mcal{S}_2^*\mcal{C}_1^*(s_0\gamma,1)=(s_0\gamma,2)$, hence $\Phi$ applied to it yields $s_0\gamma$.}

Finally, $K_2\colon F_*''\to (H^{F}_0)_*$ is given by gluing the homotopy $h^F\circ\evv_0\to h^F_0$, given by $h^F=H^F|_L$ itself, with the homotopy $(F|_N)_*\to (H^F_0)_*$, likewise given by $H^F$ itself (or more precisely by $(H^F)^{-1}$). Concatenating $K_1$ and $K_2$ yields the desired homotopy $F'_*\to (H^F_0)'_*$. Since both $K_1$ and $K_2$ cover $(\id_{h^F_0},H^F)$ by construction, we obtain the full homotopy $\mbf{H}\colon F\to F^\tame_!$. 
\end{proof}

The following natural construction will let us generalise the method of proof of \Cref{4TH5BVK}.

\begin{construction}\label{AUCLOZE}
    Recall the bijections 
    \[
        \Hom_{\mrm{sSet}}(A\times B,C)\cong\Hom_{\mrm{sSet}}(A,\Fun(B,C))
    \]
    which induce an isomorphism $\Fun(A,\Fun(B,C))\cong\Fun(A\times B,C)$ of simplicial sets for any three simplicial sets $A$, $B$, $C$. Consequently, any map $F\colon\mscc{C}\times X\to\mscc{D}$ induces a map 
    \[
        F_*\colon\Ar(\mscc{C})\times X\to\Ar(\mscc{D})
    \]
    by writing $F$ as $\mscc{C}\to\Fun(X,\mscc{D})$, obtaining $\Ar(\mscc{C})\to\Ar(\Fun(X,\mscc{D}))\cong \Fun(\Delta[1]\times X,\mscc{D})\cong\Fun(X,\Ar(\mscc{D}))$ and finally rewriting as $\Ar(\mscc{C})\times X\to\Ar(\mscc{D})$.
\end{construction}

Before we proceed to the proof of \Cref{JG5DRK5}, we will prove a generalisation of \Cref{4TH5BVK} which is a corollary to its proof technique. Given a space $X$ and a linked space $\mfrak{S}$, let us denote by $\mfrak{S}\times X$ the linked space given by term-wise cartesian product with $X$.

\begin{lemma}\label{IX9X4IU}
    Given a space $X$ and a map $\gamma\colon\EEx(\mfrak{S})\times X\to \EEx(\mfrak{T})$, there exists a homotopy 
    \[
        \mbf{H}\colon \EEx(\mfrak{S})\times X\times \Delta^1\to\EEx(\mfrak{T})
    \]
    from $\gamma$ to $\gamma^\tame_!=\mbf{H}|_1$, where
    \[
        \gamma^\tame\colon \mfrak{S}\times X\to\mfrak{T}
    \]
    is a tame map. Moreover, for every point $x\in X$ at which the restriction $\gamma|_x\colon\EEx(\mfrak{T})\to\EEx(\mfrak{T})$ is tame, the restricted homotopy 
    \[
        (\mbf{H}|_x\colon\gamma|_x\to(\gamma^\tame|_x)_!)\colon\EEx(\mfrak{S})\times\Delta^1\to\EEx(\mfrak{T})
    \]
    is also tame.
\end{lemma}
\begin{proof}
    Suppose $\gamma\in\mscc{F}_{01}$ in the sense that the restriction $\gamma|_{Y\times X}$ factors through $Y'$ for $Y=M,N$. The remaining cases are analogous. Using \Cref{AUCLOZE} we obtain the map 
    \[
        \gamma_*\colon\Ar(\EEx(\mfrak{S}))\times X\to\Ar(\EEx(\mfrak{T}))
    \]  
    which induces the map 
    \[
        h^\gamma\colon L\times X\xhookrightarrow{\Box\times\id}\down{M}{N}\times X\xrightarrow{\gamma_*}\down{M'}{N'}\cong\down{L'}{N'}\subset (N')^{\Delta^1}
    \]
    which in turn yields the commuting square 
    \[
    \begin{tikzcd}
        L\times X\ar[d,hook,"{\iota\times\id}"']\ar[r,"{h^\gamma}"] & (N')^{\Delta^1}\ar[d,"{\evv_1}"]\\
        N\times X\ar[r,"{\gamma|_N}"']\ar[ur,dotted,"{H^\gamma}"] & N'
    \end{tikzcd}
    \]
    with a homotopy extension $H^\gamma$. We set 
    \[
        \gamma^\tame=(\gamma|_{M\times X},h^\gamma_0,H^\gamma_0)\colon \mfrak{S}\times X\to \mfrak{T}.
    \]
    The desired homotopy in the form 
    \[
        \mbf{H}\colon\EEx(\mfrak{S})\times X\to\Fun(\Delta[1],\EEx(\mfrak{T}))
    \]
    can now be constructed similarly to \Cref{4TH5BVK}. We will provide the details for completeness. 
    
    We claim that the derivatives 
    \(
        \gamma_*',(H^{\gamma}_0)_*\colon\down{L}{N}\times X\to\down{L'}{N'}
    \)
    are homotopic over $(\id_{h^\gamma_0},H^\gamma)\colon L \times N\times X\to (L'\times N')^{\Delta^1}$ via a homotopy 
    \[
        \mbf{H}\colon\down{L}{N}\times X\to{\down{L'}{N'}}^{\Delta^1}
    \]
    which will, for $((\gamma,e),\alpha)\in\EEx_{k}(\mfrak{S})\times X_k$, yield the desired homotopy
    \[
        \Psi'\mcal{S}_{e}^*\mbf{H}(\Phi\mcal{C}_e^*(\gamma,e),\alpha)\colon \mcal{S}_e^*\Psi'\gamma_*'(\Phi\mcal{C}_e^*(\gamma,e),\alpha)\to\mcal{S}_e^*\Psi'(H^\gamma_0)_*(\Phi\mcal{C}_e^*(\gamma,e),\alpha)
    \]
    with $\mcal{S}_e^*\mbf{H}$ defined as in \eqref{NCOTBV1}. Its first half $K_1\colon \gamma_*'\to\gamma_*''$ is constructed as follows. The maps 
    $\down{L}{N}\times X\xrightarrow{\evv_0\times\id}L\times X\xrightarrow{h^\gamma}\Map(\Delta^1,N')$ and $\down{L}{N}\times X\xrightarrow{(\gamma|_{N\times X})_*}\Map(\Delta^1,N')$ yield 
    \[
        \gamma_*''\colon \down{L}{N}\times X\xrightarrow{(h^\gamma\circ(\evv_0\times\id))\vee (\gamma|_{N\times X})_*}\Map(\Delta^1_1,N')\xrightarrow{E_1^*}\Map(\Delta^1,N').
    \]
    For each $\delta\in\down{L}{N}$ we have $\mcal{C}_1^* s_0\delta\in\Map(\Delta^1,\down{L}{N})$, applying $\gamma_*'\colon\down{L}{N}\to\Map(X,\down{L'}{N'})$ to which yields $K_1\colon \down{L}{N}\to\Map(\Delta^1,\Map(X,\down{L'}{N'}))$, $\delta\mapsto\gamma_*'\mcal{C}_1^*s_0\delta$. Finally, the second half $K_2\colon\gamma_*''\to(H^\gamma_0)_*$ is induced by $h^\gamma$ and $(H^\gamma)^{-1}$ exactly as before.

    As for the second statement, suppose 
    \[
        \gamma|_x=(\gamma|_{M\times\{x\}},(\gamma|_x)_L,\gamma|_{N\times\{x\}})\colon\mfrak{S}\to\mfrak{T}
    \]
    is a span map and observe, using the fact that $\Phi\circ \Box=\mrm{const}$ is the ordinary constant loop inclusion, that 
    \[
    \begin{tikzcd}
        L'\ar[r,"{\mrm{const}}"] & (L')^{\Delta^1}\ar[d,"{\iota'}"]\\
        L\times\{x\} \ar[u,"(\gamma|_x)_L"]\ar[r,"{h^\gamma|_x}"] & (N')^{\Delta^1} 
    \end{tikzcd}
    \]
    commutes, as does therefore 
    \[
    \begin{tikzcd}[column sep=large]
        L\times\{x\}\ar[d,"{\iota}"]\ar[r,"{\mrm{const}\circ (\gamma|_x)_L}"] & (L')^{\Delta^1}\ar[d,"{\iota'}"]\\
        N\times\{x\}\ar[r,"{H^\gamma|_x}"] & (N')^{\Delta^1}
    \end{tikzcd}
    \]
    using the defining property of $H^\gamma$. In particular we have $h^\gamma_0|_x=(\gamma|_x)_L$, implying 
    \[
        (\gamma^\tame|_x)=(\gamma|_{M\times\{x\}},(\gamma|_{x})_L,H^\gamma_0|_x).
    \]
    Consequently, since $H^{\gamma}_1|_x=\gamma|_{N\times\{x\}}$ by construction, we obtain the homotopy 
    \begin{equation}\label{YG031HV}
        \mcal{H}_x=(\id,\id,H^\gamma|_x)\colon\gamma^\tame|_x\to\gamma|_x
    \end{equation}
    of spans maps $\mfrak{S}\to\mfrak{T}$. We claim not only that $\mscc{H}|_x$ is tame, but that it is tamely homotopic to $(\mcal{H}_x)^{-1}$.

    Now, the map
    \[
        h^\gamma\circ(\evv_0\times\id)|_x\colon\down{L}{N}\times\{x\}\to\Map(\Delta^1,N')
    \]
    coincides with $\mrm{const}\circ (\gamma|_x)_L\circ\evv_0$ and so $\gamma_*''$ coincides with $(\gamma|_{N\times X})_*$ up to reparametrisation (due to the pre-composition with $E_1$), hence $K_1|_x$ is the constant homotopy up to reparametrisation. Similarly, $K_2|_x$ is induced only by $H^\gamma$ up to reparametrisation, hence so is $\mbf{H}|_x$. In particular, writing  $\mbf{H}^\tame|_x$ for the underlying tame homotopy of $\mbf{H}|_x=(\mbf{H}^\tame|_x)_!$, reparametrisation provides a homotopy
    \begin{equation}\label{N6688LO}
        (\mcal{H}_x)^{-1}\sim\mbf{H}^\tame|_x,
    \end{equation}
    a path in $\Hom_{\Hom(\mfrak{S},\mfrak{T})}(\gamma|_x,\gamma^\tame|_x)$.
\end{proof}

\begin{proof}[Proof of \Cref{JG5DRK5}]
    By \Cref{4TH5BVK}, it remains to show that the map $\Hom_{\mscc{LS}}(\mfrak{S},\mfrak{T})\to\Hom_{\Ctop}(\EEx(\mfrak{S}),\EEx(\mfrak{T}))$ itself is fully faithful, that is, that for all $F,G\colon\mfrak{S}\to\mfrak{T}$ the map
    \[
        \Hom_{\Hom_{\mscc{LS}}(\mfrak{S},\mfrak{T})}(F,G)\to\Hom_{\Hom_{\Ctop}(\EEx(\mfrak{S}),\EEx(\mfrak{T}))}(F_!,G_!)
    \]
    is an equivalence. We will first show that it is essentially surjective.

    Let $\gamma\colon \EEx(\mfrak{S})\times\Delta^{1}\to\EEx(\mfrak{T})$ be a path $F_!\to G_!$, an object of the target. By \Cref{IX9X4IU} we obtain a homotopy 
    \[
        (\mbf{H}\colon\gamma\to\gamma^\tame_!)\colon\EEx(\mfrak{S})\times\Delta^1\times\Delta^1\to\EEx(\mfrak{S}),
    \] 
    where $\gamma^\tame=(\gamma|_{M\times\Delta^1},h^\gamma_0,H^{\gamma}_0)\colon\gamma^\tame_0\to\gamma^\tame_1$ is a path in $\Hom_{\mscc{LS}}(\gamma^\tame_0,\gamma^\tame_1)$, and in fact $\gamma^\tame_0=(F_M,F_L,H^{\gamma}_0|_0)$ and $\gamma^\tame_1=(G(m),G_L,H^{\gamma}_0|_1)$. Moreover, $\mbf{H}$ restricts to the paths
    \[
        \mbf{H}_0=(\mbf{H}|_{\{0\}\times\Delta[1]}\colon F_!\to (\gamma^\tau_0)_!),\ \mbf{H}_1=(\mbf{H}|_{\{1\}\times\Delta[1]}\colon G_!\to(\gamma^\tau_1)_!)
    \] 
    in $\Hom_{\Ctop}(\EEx(\mfrak{S}),\EEx(\mfrak{T}))$ and provides equivalently a path 
    \begin{equation}\label{MH4KTHL}
        \mbf{H}\colon \gamma\to (\mbf{H}_0\ast\gamma^\tau_!\ast\mbf{H}^{-1}_1)
    \end{equation}
    in $\Hom_{\Hom_{\Ctop}(\EEx(\mfrak{S}),\EEx(\mfrak{T}))}(F_!,G_!)$.

    On the other hand, we have the homotopies $\mcal{H}_0\colon\gamma^\tame_0\to F$ and $\mcal{H}_1\colon\gamma^\tame_1\to G$ as in \eqref{N6688LO}, yielding the homotopy equivalence 
    \[
        \Upsilon\colon\Hom_{\Hom_{\mscc{LS}}(\mfrak{S},\mfrak{T})}(\gamma^\tame_0,\gamma^\tame_1)\xrightarrow{\sim} \Hom_{\Hom_{\mscc{LS}}(\mfrak{S},\mfrak{T})}(F,G),
    \]
    \(
        \delta\mapsto \mcal{H}_0^{-1}\ast\delta\ast\mcal{H}_1.
    \)
    This yields the square 
    \[
    \begin{tikzcd}
        \Hom_{\Hom_{\mscc{LS}}(\mfrak{S},\mfrak{T})}(F,G)\ar[r]\ar[d,"{\Upsilon^{-1}}"] & \Hom_{\Hom_{\Ctop}(\EEx(\mfrak{T}),\EEx(\mfrak{T}))}(F_!,G_!)\\
        \Hom_{\Hom_{\mscc{LS}}(\mfrak{S},\mfrak{T})}(\gamma^\tame_0,\gamma^\tame_1)\ar[r] & \Hom_{\Hom_{\Ctop}(\EEx(\mfrak{T}),\EEx(\mfrak{T}))}((\gamma^\tame_0)_!,(\gamma^\tame_1)_!) \ar[u,"{\Upsilon_!}"]
    \end{tikzcd}
    \]
    which homotopy-commutes due to the functoriality of $\EEx$. 
    
    Let now $\Gamma^\tame\colon F\to G$ be a path such that $\Upsilon(\gamma^\tame)\sim\Gamma^\tame$.
    We claim that $\Gamma^\tame_!\sim\gamma$, proving the essential surjectivity of the map $\Hom(F,G)\to\Hom(F_!,G_!)$. Indeed, since $(\mcal{H}_x^{-1})_!\simeq\mbf{H}_x$ for $x=0,1$ by \eqref{N6688LO}, we observe, using \eqref{MH4KTHL}, the homotopy 
    \(
        \Gamma^\tame_!\sim\Upsilon_!(\gamma^\tame_!)=(\mcal{H}_0^{-1})_!\ast\gamma^\tame_!\ast(\mcal{H}_1^{-1})_!\sim \gamma.
    \)

    What we have proved so far can be reformulated as follows. \Cref{4TH5BVK} shows that 
    \[
        \pi_0\EEx\colon\pi_0\Hom_{\mscc{LS}}(\mfrak{S},\mfrak{T})\to\pi_0\Hom_{\Ctop}(\EEx(\mfrak{S}),\EEx(\mfrak{T}))
    \]
    is surjective, and we have proved that for any $F,G\in\Hom(\mfrak{S},\mfrak{T})$, the map 
    \[
        \Pi_1\EEx\colon\pi_0\Hom(F,G)\to\pi_0\Hom(F_!,G_!)
    \] 
    is also surjective. In particular this implies that $\pi_0\EEx$ is injective and so bijective, and that $\pi_1\EEx$, with respect to any basepoint, is surjective. 
    The fact that $\EEx$ induces isomorphisms on all homotopy groups can now be seen in a similar fashion. 
    
    Let $n\geq1$ and assume by induction that $\pi_i\EEx$ is an isomorphism for $i\in\{0,\dots,n-1\}$ and that $\pi_{n}\EEx$ is surjective. It suffices to prove, for every $F,G\in\Omega^{n}\Hom(\mfrak{S},\mfrak{T})$ in the $n$-fold loop space of $\Hom(\mfrak{S},\mfrak{T})$ with respect to any basepoint, that the map 
    \[
        \Pi_{n+1}\EEx\colon\pi_0\Hom_{\Omega^{n}\Hom(\mfrak{S},\mfrak{T})}(F,G)\to\pi_0\Hom_{\Omega^{n}\Hom(\EEx(\mfrak{S}),\EEx(\mfrak{T}))}(F_!,G_!)
    \]
    is surjective, yielding in particular the injectivity of $\pi_{n}\EEx$. 
    
    Let us write $\mbb{S}^n$ for the simplicial $n$-sphere and let
    \(
        \gamma\colon\EEx(\mfrak{S})\times\mbb{S}^n\times\Delta^1\to\EEx(\mfrak{T})
    \) 
    be a path $F_!\to G_!$, representing an element in the target of $\Pi_{n+1}\EEx$. By \Cref{IX9X4IU} we obtain a homotopy
    \[
        (\mbf{H}\colon\gamma\to\gamma^\tame_!)\colon\EEx(\mfrak{S})\times\mbb{S}^n\times\Delta^1\times\Delta^1\to\EEx(\mfrak{T}),
    \]
    where the path 
    \(
        \gamma^\tame=(\gamma|_{M\times\mbb{S}^n\times\Delta^1},h^\gamma_0,H^\gamma_0)\colon\gamma^\tame_0\to\gamma^\tame_1
    \)
    is tame. Note that the second statement of \Cref{IX9X4IU} also holds for subspaces with the same proof: if $Y\subset X$ is a subspace such that the restriction $\gamma|_Y\colon\EEx(\mfrak{S})\times Y\to\EEx(\mfrak{T})$ is tame in the sense that $\gamma=\gamma^{\tame}_!$ for a tame map $\gamma^\tame\colon\mfrak{S}\times Y\to\mfrak{T}$, then the restricted homotopy $\mbf{H}_Y\colon\EEx(\mfrak{S})\times Y\times\Delta^1\to\EEx(\mfrak{T})$ is also tame. The construction of a homotopy-preimage $\Gamma^\tame$ in $\Hom_{\Omega^n\Hom(\mfrak{S},\mfrak{T})}(F,G)$ proceeds now as above upon replacing $x=0,1\in\Delta^1$ everywhere by the subspaces $Y=\mbb{S}^n\times\{i\}\subset X=\mbb{S}^n\times\Delta^1$, $i\in\{0,1\}$. This proves that $\Pi_{n+1}\EEx$ is surjective and that $\pi_n\EEx$ is injective.
\end{proof}

\printbibliography

\end{document}